\documentclass[12pt, A4paper, leqno]{amsart}











\setcounter{page}{1}


\usepackage{verbatim}
\usepackage{amssymb}
\usepackage{amsbsy}
\usepackage{amscd}
\usepackage{amsmath}
\usepackage{amsthm}
\usepackage[mathscr]{eucal}

\usepackage{graphicx}
\usepackage{epstopdf}
\usepackage{sfmath}
\usepackage{comment}

\usepackage[small, nohug, heads-vee]{diagrams} 
\diagramstyle[labelstyle=\scriptstyle]

%
%
%

%



%
%
%

\usepackage{url}


\usepackage[backref,bookmarks=true,colorlinks=true,pdfstartview=FitV,linkcolor=blue, citecolor=red,urlcolor=green]{hyperref}  





\newtheorem{theorem}{Theorem}[section]
\newtheorem{lemma}[theorem]{Lemma}
\newtheorem{corollary}[theorem]{Corollary}
\newtheorem{proposition}[theorem]{Proposition}



\theoremstyle{remark}
\newtheorem{remark}[theorem]{Remark}
\newtheorem{hypothesis}[theorem]{Hypothesis}
\newtheorem{example}[theorem]{Example}

\newtheorem{definition}{Definition}\numberwithin{definition}{section}




\hyphenation{neighbor-hood}

\newcommand\bR{{\mathbb{R}}}
\newcommand\bC{{\mathbb C}}
\newcommand\bZ{{\mathbb Z}}

\newcommand\Hom{{\rm Hom}}
\newcommand\dev{{\bf dev}}
\newcommand\SI{{\mathbb{S}}}

\newcommand\Bd{{\rm bd}}
\newcommand\clo{{\rm Cl}}
\newcommand\bdd{{\mathbf{d}}}

\newcommand\ra{\rightarrow}

\newcommand\emp{\emptyset}
\newcommand\eps{\epsilon}

\newcommand\Aff{{\mathbf{Aff}}}
\newcommand\ovl{\overline}
\newcommand\Aut{{\mathbf{Aut}}}
\newcommand\Idd{{\rm I}}

\newcommand\bv{{\mathbf{v}}}

\newcommand\CN{{\mathcal{N}}}
\newcommand\Pgl{{\mathrm{PGL}}(n+1, \bR)}
\newcommand\Ag{{\mathrm{Ag}}}




\newcommand\SL{{\mathsf{SL}}}

\newcommand\SO{{\mathsf{SO}}}

\newcommand\PGL{{\mathsf{PGL}}}
\newcommand\SLnp{{\mathsf{SL}}_\pm(n+1, \bR)}
\newcommand\SLn{{\mathsf{SL}}_\pm(n, \bR)}
\newcommand\SLf{{\mathsf{SL}}_\pm(4, \bR)}

\newcommand\GL{{\mathsf{GL}}}

\newcommand\GLnp{{\mathsf{GL}}(n+1, \bR)}
\newcommand\PGLnp{{\mathsf{PGL}}(n+1, \bR)}

\newcommand\orb{\mathcal{O}} 
\newcommand\torb{\tilde{\mathcal{O}}}
\newcommand\bGamma{{\boldsymbol \Gamma}}
\newcommand\leng{{\mathrm{length}}}

\newcommand\cwl{{\mathrm{cwl}}}

\newcommand\SLpm{{\mathrm{SL}}_{\pm}(n+1, \bR)}


\setcounter{tocdepth}{3} 

\usepackage[usenames,dvipsnames,svgnames,table]{xcolor}

\title[The classification of ends]
{A classification of radial or totally geodesic ends of real projective orbifolds I: a survey of results}


\author{Suhyoung Choi}
\address{ Department of Mathematics \\ KAIST \\
{Daejeon 34141, South Korea }
}
\email{schoi@math.kaist.ac.kr}

\date{\today}



\subjclass[2010]{Primary 57M50; Secondary 53A20, 53C15}
\keywords{geometric structures, real projective structures, $\SL(n, \bR)$, representation of groups}
\thanks{This work was supported by the National Research Foundation
of Korea (NRF) grant funded by the Korea government (MEST) (No.2010-0027001).} 







\begin{document}

\begin{abstract} 
Real projective structures on $n$-orbifolds are useful in understanding the space of 
representations of discrete groups into $\SL(n+1, \bR)$ or $\PGL(n+1, \bR)$. 
A recent work shows that many hyperbolic manifolds 
deform to manifolds with such structures not projectively equivalent to the original ones. 
The purpose of this paper is to understand 
the structures of ends of real projective $n$-dimensional orbifolds for $n \geq 2$.
In particular, these have the radial or totally geodesic ends. Hyperbolic manifolds with cusps 
and hyper-ideal ends are examples.
For this, we will study the natural conditions on eigenvalues of holonomy 
representations of ends when these ends are manageably understandable. 
We will show that only the radial or totally geodesic ends of lens shape
or horospherical ends exist for strongly irreducible properly convex 
real projective orbifolds under some suitable conditions. 
The purpose of this article is to announce these results. 

\end{abstract}


\maketitle

\tableofcontents




\section{Introduction}  


In these series of  
articles, we are interested in real projective structures on orbifolds, which are principally non-manifold ones. 
Orbifolds are basically objects finitely covered by manifolds. 
The real projective structures can be considered as torsion-free projectively flat affine connections on orbifolds. 
Another way to view is to consider these as an immersion from the universal cover $\tilde \Sigma$ of 
an orbifold $\Sigma$ to $\bR P^{n}$ equivariant with respect to a homomorphism $h: \pi_{1}(\Sigma)
\ra \PGL(n+1, \bR)$. 
These orbifolds have ends. We will study the cases when the ends are of {specific type.}
The types that we consider are radial ones, i.e., R-ends, where end neighborhoods are foliated 
by concurrent projective geodesics. 
A {\em hypersurface} is a codimension-one submanifold or suborbifold of a manifold or an orbifold. 
{Another type ones are {\em totally geodesic ones}, or T-ends, when the closures of end neighborhoods can 
be compactified by ideal totally geodesic hypersurfaces in some ambient real projective orbifolds.}

 Kuiper, Benz\'ecri, Koszul,  Vey, and Vinberg might be the first people to consider these objects seriously 
 as they are related to proper action of affine groups on affine cones in $\bR^{n}$. 
 We note here, of course, the older study of affine structures on manifolds
 with many major open questions.

\subsection{Some recent motivations.} 

Recently, there were many research papers on convex real projective structures on manifolds and orbifolds.
(See the work of Goldman \cite{Gconv}, Choi \cite{cdcr1}, \cite{cdcr2}, Benoist \cite{Ben1}, 
Kim \cite{ink}, Cooper, Long, Thistlethwaite \cite{Cooper2006}, \cite{CLT} and so on.)
Topologists will view each of these as
a manifold with a structure given by 
 a maximal atlas of charts to $\bR P^n$ where transition maps are projective. 
Hyperbolic and many other geometric structures will induce canonical real projective structures. 
Sometimes, these can be deformed to real projective structures not arising from such obvious constructions. 
In general, the theory of the discrete group representations and their deformations form very much mysterious subjects still. 
(See the numerous and beautiful examples in Sullivan-Thurston \cite{ST}.)


\begin{figure}
\centerline{\includegraphics[height=4cm]{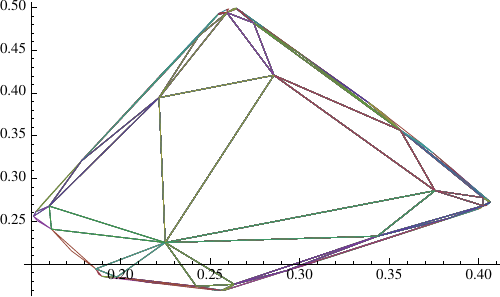}}
\caption{The developing images of convex $\bR P^2$-structures on $2$-orbifolds deformed from hyperbolic ones: $S^2(3,3,5)$.}
\label{fig:good2}
\end{figure}

\begin{figure}

\centerline{\includegraphics[height=4cm]{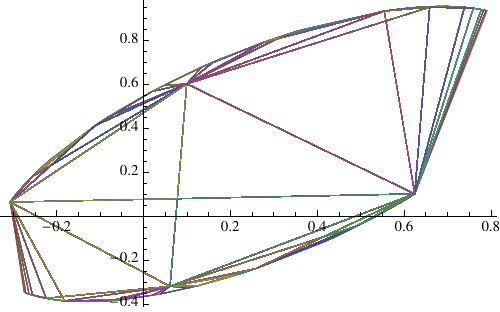}}
\caption{The developing images of convex $\bR P^2$-structures on $2$-orbifolds deformed from hyperbolic ones: 
$D^2(2, 7)$.}
\label{fig:good3}

\end{figure}

Since the examples are easier to construct, even now, we will be studying orbifolds, a natural generalization of 
manifolds. 
Deforming a real projective structure on an orbifold to an unbounded situation results in the actions of 
the fundamental group on affine buildings which hopefully will lead us to some understanding of 
orbifolds and manifolds in particular of dimension three as indicated by
Cooper, Long, Thistlethwaite, and Tillmann.

\subsection{Real projective structures on orbifolds with ends} 
It was discovered by D. Cooper, D. Long, and M. Thistlethwaite \cite{Cooper2006}, \cite{CLT} that 
many closed hyperbolic $3$-manifolds deform to real projective $3$-manifolds. 
Later S. Tillmann found an example of a $3$-orbifold obtained from pasting sides of a single ideal hyperbolic tetrahedron 
admitting a complete hyperbolic structure with cusps and a one-parameter family of 
real projective structure deformed from the hyperbolic one (see \cite{conv}).
Also, Craig Hodgson, Gye-Seon Lee, and I found a few other examples: $3$-dimensional ideal hyperbolic Coxeter orbifolds 
without edges of order $3$ has at least $6$-dimensional deformation spaces in \cite{CHL}. (See Example \ref{exmp-Lee}.)


Crampon and Marquis \cite{CM2} and Cooper, Long, and Tillmann \cite{CLT2} have done similar study with 
the finite volume condition. In this case, only possible ends are horospherical ones. 
The work here studies more general type ends 
while we have benefited from their work. 
We will see that there are examples where horospherical R-ends deform to lens-shaped R-ends  and vice versa
( see also Example \ref{exmp-Lee}.)
More recently, {Ballas}, Cooper, Long, Leitner and Tillmann also have made progresses on the classification of the ends where 
they require that the ends have neighborhoods with nilpotent fundamental groups. (See \cite{CLT3}
and \cite{Leitner1} and \cite{Leitner2}.)


\begin{remark}
A summary of the deformation spaces of real projective structures on closed orbifolds and surfaces is given 
in \cite{Cbook} and \cite{Choi2004}. See also Marquis \cite{Marquis} for the end theory of $2$-orbifolds. 
The deformation space of real projective structures on an orbifold 
loosely speaking is the space of isotopy equivalent real projective structures on
a given orbifold. (See \cite{conv} also.) 
\end{remark}

Also, it seems likely from some examples that these orbifolds with ends deform more easily. 

Our main aim is to understand these phenomena theoretically. It became clear from 
our attempt in \cite{conv} that we needed to understand and classify the types of 
ends of the relevant convex real projective orbifolds. We will start with the simplest ones: 
radial type ones or totally geodesic ones. 


But as Davis observed, there are many other types such 
as ones preserving subspaces of dimension greater than equal to $0$. 
In fact, Cooper and Long found such an example from 
$S/\SL(3, \bZ)$ for the space $S$ of unimodular positive definite bilinear forms. 
Since $S$ is a properly convex domain in $\bR P^{5}$ and $\SL(3, \bZ)$ acts projectively, 
$S/\SL(3, \bZ)$ is a strongly tame properly convex real projective orbifold
by the classical theory of lattices. These types of ends were compactified 
by Borel and Serre \cite{BS} for arithmetic manifolds. The ends are not of type studied here. 
We will not study these here.

In \cite{conv}, we show that the deformation spaces of real projective structures on orbifolds are locally homeomorphic to 
the spaces of conjugacy classes of representations of their fundamental groups 
where both spaces are restricted by some end conditions. 

It remains how to see for which of these types of real orbifolds, 
nontrivial deformations exist or not. 
For example, we can consider examples such as complete hyperbolic manifolds 
and how to compute the deformation space. 
From Theorem 1 in \cite{CHL} with Coxeter orbifolds, we know that 
a complete hyperbolic Coxeter orbifold always deforms nontrivially. 
(See also \cite{Choi2006}.) S. Ballas \cite{Ballas, Ballas2} also produced some results.  
We conjecture that maybe these types of real projective orbifolds with R-ends might be very flexible. 
Ballas, Danciger, and Lee \cite{BDL} announced that they have found much evidence for this very recently in Cooperfest in May 2015.
Also, there are some related developments for the complex field $\bC$ with the Ptolemy module in SnapPy as developed by  
S. Garoufalidis, M. Goerner, D. Thurston, and C. Zickert.  






\subsection{Our settings} 

Given an orbifold, 
recall the notion of universal covering orbifold $\torb$ 
with the orbifold covering map $p_{\orb}:\torb \ra \orb$
and the deck transformation group
$\pi_{1}(\orb)$ so that $p_{\orb}\circ \gamma = p_{\orb}$ for $\gamma \in \pi_{1}(\orb)$. 
(See \cite{Thnote}, \cite{BH}, \cite{Choi2004} and \cite{Cbook}.)
We hope to generalize these theories to noncompact orbifolds with some particular conditions on ends. 
In fact, we are trying to generalize the class of complete hyperbolic manifolds with finite volumes. 
These are $n$-orbifolds with compact suborbifolds 
whose complements are diffeomorphic to intervals times closed $(n-1)$-dimensional orbifolds. 
Such orbifolds are said to be {\em strongly tame} orbifolds. 
An {\em end neighborhood} is a  component of the complement of 
a compact subset not contained in any compact subset of the orbifold. 
An {\em end} $E$ is an equivalence class of compatible exiting sequences of end neighborhoods. 
Because of this, we can associate an $(n-1)$-orbifold at each end and we define 
the {\em end fundamental group} $\pi_1(E)$ as a subgroup of the fundamental group $\pi_1(\orb)$ of the orbifold $\orb$. 
We also put the condition on end neighborhoods being foliated by radial lines or to have totally geodesic ideal boundary. 

We studied some such Coxeter orbifolds with ends  in \cite{Choi2006} already. 
These have convex fundamental polytopes and are easier to understand. 

\begin{figure}
\begin{center}$
\begin{array}{cccc}
\includegraphics[width=3.5in]{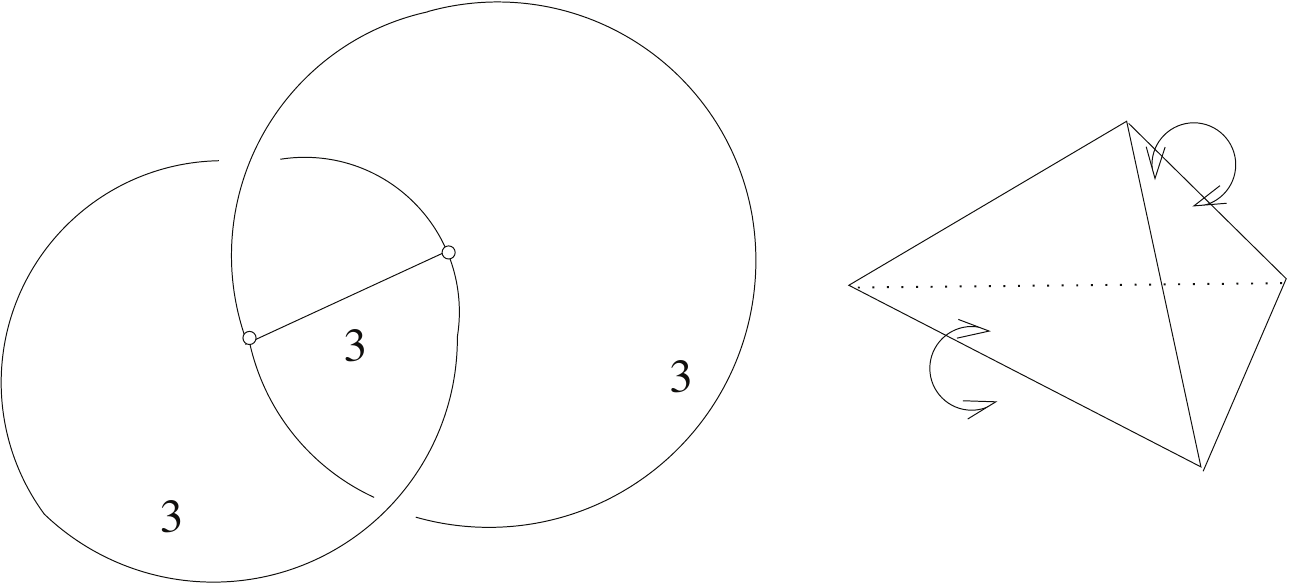} 
\end{array}$

\end{center} 

\caption{The handcuff graph and the construction of $3$-orbifold of Tillmann by pasting faces of  {a complete hyperbolic tetrahedron}.}
\label{fig:cubs}
\end{figure}

\subsubsection{Real projective structures on manifolds and orbifolds with ends.}
In general, the theory of geometric structures on manifolds with ends is not studied very well. 
We should try to obtain more results here and find what the appropriate conditions are. 
This question seems to be also related to how to make sense of the topological structures of ends 
in many other geometric structures such as ones modelled on symmetric spaces and so on. 
(See for example \cite{KL}, \cite{KLP}, and \cite{GKW}.) 



Given a vector space $V$, we let $P(V)$ denote the space obtained by taking the quotient space of \index{$P(\cdot)$}
$V -\{O\}$ under the equivalence relation
\[v\sim w \hbox{ for } v, w \in V  -\{O\}  \hbox{ iff } v = s w, \hbox{ for } s \in \bR -\{0\}.\] 
We let $[v]$ denote the equivalence class of $v \in V  -\{O\}$. 
For a subspace $W$ of $V$, we denote by $P(W)$ the image of $W-\{O\}$ under the quotient map, also said to be a {\em subspace}. \index{subspace} 

Recall that the projective linear group $\PGL(n+1, \bR)$ acts on $\bR P^{n}$, i.e.,
$P(\bR^{n+1})$, in a standard manner. 

Let $\mathcal{O}$ be a noncompact strongly tame $n$-orbifold where the orbifold boundary is not necessarily empty. 
{For convenience, we assume $n \geq 2$ in this article.}
\begin{itemize} 
\item A {\em real projective orbifold} is an orbifold with a geometric structure modelled on $(\bR P^n, \PGL(n+1, \bR))$. \index{real projective orbifold} 
(See \cite{Choi2006} and Chapter 6 of \cite{Cbook}.)
\item A real projective orbifold also has the notion of projective geodesics as given by local charts
and has a universal cover $\tilde{\mathcal{O}}$ where a deck transformation group $\pi_1({\mathcal{O}})$ acting on. 
\item The underlying space of $\mathcal{O}$ is homeomorphic to the quotient space $\tilde{\mathcal{O}}/\pi_1({\mathcal{O}})$.
\end{itemize} 
Let $\bR^{n+1 \ast}$ denote the dual of $\bR^{n+1}$.  \index{$\bR^{n+1 \ast}$}
Let $\bR P^{n \ast}$ denote the dual projective space $P(\bR^{n+1 \ast})$.  \index{$\bR P^{n \ast}$}
$\PGL(n+1, \bR)$ acts on $\bR P^{n \ast}$ by taking the inverse of the dual transformation. 
Then $h: \pi_{1}(\orb) \ra \PGL(n+1, \bR)$ has a dual representation $h^{\ast}: \pi_{1}(\orb) \ra \PGL(n+1, \bR)$ sending 
elements of $\pi_1(\orb)$ to the inverse of the dual transformation of $\bR^{n+1 \ast}$.

A {\em projective map} $f:\orb_{1}\ra\orb_{2}$ from an real projective orbifold $\orb_{1}$ to $\orb_{2}$ 
is a map so that for each $p \in \orb_{1}$, there is a chart $\phi_{1}: U_{1} \ra \bR P^{n}$, $p \in U_{1}$, and 
$\phi_{2}:U_{2} \ra \bR P^{n}$ where $f(p) \in U_{2}$ 
where $f(U_{1}) \subset U_{2}$ and $\phi_{2}\circ \phi_{1}^{-1}$ is a projective map. 

For an element $g \in \PGL(n+1, \bR)$, we denote
\begin{alignat}{2}
g \cdot [w] &:= [\hat g(w)] & \hbox{ for } [w] \in \bR P^n \hbox{ or } \nonumber \\ 
  & := [(\hat g^{T})^{-1}(w)] & \hbox{ for } [w] \in \bR P^{n \ast}
\end{alignat} 
where $\hat g$ is any element of $\SLpm$ mapping to $g$ and $\hat g^T$ the transpose of $\hat g$. \index{$\PGL(n+1, \bR)$}

The complement of a codimension-one subspace of $\bR P^n$ can be identified with an affine space 
$\bR^n$ where the geodesics are preserved. The group of affine transformations of $\bR^n$ are
{the restriction} to $\bR^n$ of the group of projective transformations of $\bR P^n$ fixing the subspace. 
We call the complement an {\em affine subspace}. It has a geodesic structure of a standard affine space.  \index{affine subspace}
A {\em convex domain} in $\bR P^n$ is a convex subset of an affine subspace.  \index{convex domain}
A {\em properly convex domain} in $\bR P^n$ is a convex domain contained in a precompact subset of an affine subspace.  \index{convex domain! properly convex} 
 
The important class of real projective structures are so-called convex ones
where any arc in $\mathcal{O}$ can be homotopied with endpoints fixed to a straight geodesic 
where the developing map $\dev$ is injective to $\bR P^n$ except possibly at the endpoints. 
If the open orbifold has a convex structure, it is covered by a convex domain $\Omega$ in $\bR P^n$. 
Equivalently, this means that the image of the developing map $\dev(\tilde{\mathcal{O}})$ for the universal cover $\tilde{\mathcal{O}}$ of $\mathcal{O}$ 
is a convex domain. 
$\mathcal{O}$ is projectively diffeomorphic 
to $\dev(\tilde{\mathcal{O}})/h(\pi_1(\mathcal{O}))$. In our discussions, since $\dev$ often is an imbedding, 
$\tilde{\mathcal{O}}$ will be regarded as an open domain in $\bR P^n$ and $\pi_1(\mathcal{O})$ a subgroup of
$\PGL(n+1, \bR)$ in such cases. 
This simplifies our discussions. 


Since this paper has many topics, we will outline this paper. 

\subsection{Outline.} 
In this paper, we will survey some results that we obtained for the ends of convex real projective orbifolds. 
There are three parts to expose the work here.
\begin{itemize}
\item[(I)] The preliminary review and examples. We discuss some parts on duality and finish our work on 
complete affine R-ends. 
(This corresponds to the present paper).
\item[(II)] We classify properly convex R-ends and T-ends when they satisfy the uniform middle eigenvalue conditions. 
(See \cite{End2}.)
\item[(III)] We classify nonproperly convex and non-complete affine but convex R-ends (NPNC-ends). (See \cite{End3}.)
\end{itemize} 
{We are currently writing a book \cite{book} generalizing the results in this article and the articles \cite{conv, endclass, End2, End3} simplifying 
many assumptions and results.}


Part I: 
In Section \ref{sec-prelim}, we go over basic definitions. We discuss ends of orbifolds, convexity, 
the Benoist theory on convex divisible actions, 
and so on. 

In Section \ref{sec-duality} we discuss the dual orbifolds of a given convex real projective orbifold. 

In Section \ref{sec-ends}, we will discuss the ends of orbifolds, covering most elementary aspects of the theory. 
For a properly convex real projective orbifold, 
the space of rays in each R-end give us a closed real projective orbifold of dimension $n-1$. 
The orbifold is convex. The universal cover can be a complete affine space 
or a properly convex domain or a convex domain that is neither. 

In Section \ref{sec-exend}, we discuss objects associated with R-ends, 
and examples of ends; horospherical ones, totally geodesic ones, 
and bendings of ends to obtain more general examples of ends. 

In Section  \ref{sub-horo}, 
we discuss horospherical ends. First, they are complete ends and 
have holonomy matrices with only unit norm eigenvalues 
and their end fundamental groups are virtually abelian. 
Conversely, a complete end in 
a properly convex orbifold has to be a horospherical end
or another type that we can classify. 

We begin the part II:

In Section \ref{sec-endth}, we start to study the properly convex R-end theory. 
First, we discuss the holonomy representation spaces.
Tubular actions and the dual theory of affine actions are discussed. We show that distanced actions
and asymptotically nice actions are dual. We explain that the uniform middle eigenvalue condition 
implies the existence of the distanced action. The main result here is the characterization of 
R-ends whose end fundamental groups
satisfy weakly uniform or uniform middle eigenvalue conditions. That is, they are either  lens-shaped R-ends 
or quasi-lens-shaped R-ends. Here, we will classify R-ends satisfying the uniform middle eigenvalue conditions. 
We also define the quasi-lens-shaped R-ends.

We go to Part III. 
In Section \ref{sec-notprop}, we discuss the R-ends that are NPNC. 
First, we show that the end holonomy group for an end $E$ will have an exact sequence 
\[ 1 \ra N \ra h(\pi_1(\tilde E)) \longrightarrow N_K \ra 1\] 
where $N_K$ is in the projective automorphism group $\Aut(K)$ of a properly convex compact set $K$, 
$N$ is the normal subgroup of elements mapping to the trivial automorphism of $K$,
and $K^o/N_K$ is compact. 
We show that $\Sigma_{\tilde E}$ is foliated by complete affine spaces of dimension $\geq 1$. 
We explain that an NPNC-end satisfying the {transverse weak middle eigenvalue} condition for NPNC-ends is a quasi-joined R-end.

\subsection{Acknowledgements} 
We thank David Fried for helping me to understand the issues with the distanced nature of the tubular actions and the duality,
and Yves Carri\`ere with the general approach to study the indiscrete cases for nonproperly convex ends. 
The basic Lie group approach of Riemannian foliations was a key idea here as well as the theory of 
Fried on distal groups. 
We thank Yves Benoist with some initial discussions on this topic, which were very helpful
for Section \ref{sec-endth} and thank Bill Goldman and Francois Labourie 
for discussions resulting in the proof of the distanced nature of the tubular actions. 
We thank Daryl Cooper and Stephan Tillmann 
for explaining their work and help and we also thank Micka\"el Crampon and Ludovic Marquis also. 
Their works obviously were influential 
here. The study was begun with a conversation with Tillmann at ``Manifolds at Melbourne 2006" 
and I began to work on this seriously from my sabbatical at Univ. Melbourne from 2008. 
Also, many technical points were made clear by discussions with Daryl Cooper at numerous occasions. 
We also thank Craig Hodgson and Gye-Seon Lee for working with me with many examples and their 
insights. The idea of radial ends comes from the cooperation with them.
We thank the referee for pointing out missing cases for the complete affine ends in 
{an earlier version.}

\section{Preliminaries} \label{sec-prelim}

In this paper, we will be using the smooth category{;} that is, we will be using smooth maps and smooth charts and so on. 
We explain the material in the introduction again.  

First, we explain what ends mean and define p-ends and p-end fundamental groups. 
Next we will define real projective structures on orbifolds.
We define convexity and explain the Benoist theory \cite{Ben1}, \cite{Ben2}, \cite{Ben3}, and \cite{Ben4}. 
In particular, the decomposition of properly convex orbifolds will be explained.

\subsection{Preliminary definitions} 

If $A$ is a domain of subspace of $\bR P^n$ or $\SI^n$, we denote by $\Bd A$ the topological boundary 
in the subspace. 
The closure $\clo(A)$ of a subset $A$ of $\bR P^n$ or $\SI^n$ is the topological closure in $\bR P^n$ or in $\SI^n$. 
Define $\partial A$ for a manifold or orbifold $A$ to be the {\em manifold or orbifold boundary}. 
Also, $A^o$ will denote the manifold or orbifold interior of $A$.

\subsection{Distances used} \label{sub-Haus}

\begin{definition}\label{defn-Haus} 
Let $\bdd$ denote the standard spherical metric on $\SI^n$ {\rm (}resp. $\bR P^n${\rm )}.
Given two compact subsets $K_1$ and $K_2$ of $\SI^n$ {\rm (}resp. $\bR P^n${\rm ),}
we define the spherical Hausdorff distance $\bdd_H(K_1, K_2)$ between $K_1$ and $K_2$ to be 
\[ \inf\{\eps > 0| K_2 \subset N_\eps(K_1), K_1 \subset N_\eps(K_2)  \}.\] 
The simple distance $\bdd(K_1, K_2)$ is defined as 
\[ \inf\{ \bdd(x, y)| x \in K_1, K_2 \}.\]
\end{definition}

Recall that every sequence of compact sets $\{K_i\}$ in $\SI^n$ (resp. $\bR P^n$) has
a convergent subsequence. We say that $\{K_{i}\}$ \hypertarget{term-geoconv}{{\em geometrically converges}} to a compact set $K$ if 
{$\bdd_{H}(K_{i}, K) \ra 0$} as $i \ra \infty$. 
Also, given a sequence $\{K_i\}$ of compact sets, 
$\{K_i\} \ra K$ for a compact set $K$ if and only if every sequence of points $x_i \in K_i$ has limit points in $K$ only
and every point of $K$ has a sequence of points $x_i \in K_i$ converging to it. 
(These facts can be found in some topology textbooks.)

\subsubsection{Topology of orbifolds and their ends.} \label{sub-ends}
An {\em orbifold} $\orb$ is a topological space with charts modeling open sets by 
quotients of Euclidean open sets or half-open sets 
by finite group actions and compatibly patched with one another. 
\hypertarget{term-bd}{The boundary $\partial \orb$} of an orbifold is defined as the set of points with only half-open sets as models. 
(These are often distinct {from the topological} boundary.)
Orbifolds are stratified by manifolds. 
Let $\orb$ denote an $n$-dimensional orbifold with finitely many ends 
where end neighborhoods are homeomorphic to closed  $(n-1)$-dimensional orbifolds times an open interval. 
We will require that $\orb$ is \hypertarget{term-sttame}{{\em strongly tame}}; that is, $\orb$ has a compact suborbifold $K$ 
so that $\orb - K$ is a disjoint union of end neighborhoods {diffeomorphic to} 
closed $(n-1)$-dimensional orbifolds multiplied by open intervals.
Hence $\partial \orb$ is a compact suborbifold. 

An {\em orbifold covering map} is a map so that for a given modeling open set as above, the inverse 
image is a union of modeling open sets that are quotients by subgroups of the original model.
We say that an orbifold is a manifold if it has a subatlas of charts with trivial local groups. 
We will consider good orbifolds {only, i.e., ones covered by simply connected manifolds. }
In this case, the universal covering orbifold $\torb$ is a manifold
with an orbifold covering map $p_\orb: \torb \ra \orb$. 
The group of deck transformations will be denote by $\pi_1(\orb)$ or $\bGamma$.
They act properly discontinuously on $\torb$ but not necessarily freely. 

By strong tameness, $\orb$ has only finitely many ends $E_1, \ldots, E_m$,
and each end has an end neighborhood diffeomorphic to $\Sigma_{E_i} \times (0, 1)$.
Here, $\Sigma_{E_{i}}$ up to diffeomorphism
may not be uniquely determined. 
(However, our radial or totally geodesic end-orbifolds will have determined $\Sigma_{\tilde E}$ up to 
diffeomorphisms.)

By an {\em exiting sequence} of sets $U_i$ of $\torb$, we mean 
a sequence of  neighborhoods $\{U_i\}$ so that 
$U_i \cap p_{\orb}^{-1}(K) \ne \emp$ for only finitely many indices for each compact subset $K$ of $\orb$. 

Each end neighborhood $U$ diffeomorphic to $\Sigma_{E} \times (0, 1)$ 
of an end $E$ lifts to a connected open set 
$\tilde U$ in $\torb$ 
where a subgroup of deck transformations $\bGamma_{\tilde U}$ acts on $\tilde U$ where 
$p_{\torb}^{-1}(U) = \bigcup_{g\in \pi_1(\orb)} g(\tilde U)$. Here, each component
$\tilde U$ is said to a \hypertarget{term-pendnh}{{\em proper pseudo-end neighborhood}}.
\begin{itemize} 
\item A {\em pseudo-end sequence} is an exiting sequence of proper pseudo-end neighborhoods 
$U_1 \supset U_2 \supset \cdots $. 
\item Two pseudo-end sequences are {\em compatible} if an element of one sequence is contained 
eventually in the element of the other sequence. 
\item A compatibility class of a proper pseudo-end sequence is called a {\em pseudo-end} of $\torb$.
Each of these corresponds to an end of $\orb$ under the universal covering map $p_{\orb}$.
\item For a pseudo-end $\tilde E$ of $\torb$, we denote by $\bGamma_{\tilde E}$ the subgroup $\bGamma_{\tilde U}$ where 
$U$ and $\tilde U$ is as above. We call $\bGamma_{\tilde E}$ a \hypertarget{term-pendfund}{{\em pseudo-end fundamental group}}.
\item A \hypertarget{term-pendn}{{\em pseudo-end neighborhood }} $U$ of a pseudo-end $\tilde E$ is a $\bGamma_{\tilde E}$-invariant open set containing 
a proper pseudo-end neighborhood of $\tilde E$. 
\end{itemize}
(From now on, we will replace ``pseudo-end'' with the abbreviation \hypertarget{term-pend}{``p-end''}.) 





\subsubsection{Real projective structures on orbifolds.} 
Recall the real projective space $\bR P^n$ 
is defined as 
$\bR^{n+1} - \{O\}$ under the quotient relation $\vec{v} \sim \vec{w}$ iff $\vec{v} = s\vec{w}$ for $s \in \bR -\{O\}$. 
We denote by $[x]$ the equivalence class of a nonzero vector $x$. 
The general linear group $\GLnp$ acts on $\bR^{n+1}$ and $\PGLnp$ acts faithfully on $\bR P^n$. 
Denote by $\bR_+ =\{ r \in \bR| r > 0\}$.
The {\em real projective sphere} $\SI^n$ is defined as the quotient of $\bR^{n+1} -\{O\}$ under the quotient relation 
$\vec{v} \sim \vec{w}$ iff $\vec{v} = s\vec{w}$ for $s \in \bR_+$. 
We will also use $\SI^n$ as the double cover of $\bR P^n$. 
Then $\Aut(\SI^n)$, isomorphic to the subgroup $\SLnp$ of $\GLnp$ of 
determinant $\pm 1$, double-covers $\PGLnp$. 
$\Aut(\SI^{n})$ acts as a group of projective automorphisms of $\SI^n$. 
A {\em projective map} of a real projective orbifold to another is a map that is projective by charts to $\bR P^n$. 
Let $\Pi: \bR^{n+1}-\{O\} \ra \bR P^n$ be a projection and let $\Pi':  \bR^{n+1}-\{O\} \ra \SI^n$ denote one for $\SI^n$. 
An infinite subgroup $\Gamma$ of $\PGLnp$ (resp. $\SLnp$) is {\em strongly irreducible} if every finite-index subgroup is irreducible. 
A {\em subspace} $S$ of $\bR P^n$ (resp. $\SI^n$) is the image of a subspace with the origin removed under the projection $\Pi$ (resp. $\Pi'$).

A cone $C$ in $\bR^{n+1} -\{O\}$ is a subset so that given a vector $x \in C$, 
$s x \in C$ for every $s \in \bR_+$. 
A {\em convex cone} is a cone that is a convex subset of $\bR^{n+1}$ in the usual sense. 
A {\em proper convex cone} is a convex cone not containing a complete affine line. 

A line in $\bR P^n$ or $\SI^n$ is an embedded arc in a $1$-dimensional subspace. 
A {\em projective geodesic} is an arc in a projective orbifold developing into a line in $\bR P^n$
or to a one-dimensional subspace of $\SI^n$. 
An affine subspace $A^n$ can be identified with the complement of a codimension-one subspace 
$\bR P^{n-1}$ so that the geodesic structures are same up to parameterizations. 
A {\em convex subset} of $\bR P^n$ is a convex subset of an affine subspace in this paper. 
A {\em properly convex subset} of  $\bR P^n$ is a precompact convex subset of an affine subspace. 
$\bR^n$ identifies with an open half-space in $\SI^n$ defined by a linear function on $\bR^{n+1}$. 
(In this paper an affine space is either embedded in $\bR P^n$ or $\SI^n$.)

An {\em $i$-dimensional complete affine subspace} is 
a subset of a projective orbifold projectively diffeomorphic to 
an $i$-dimensional affine subspace in some affine subspace $A^n$ of $\bR P^n$ or $\SI^n$. 

Again an affine subspace in $\SI^n$ is a lift of an affine space in $\bR P^n$, 
which is the interior of an $n$-hemisphere. 
Convexity and proper convexity in $\SI^n$ are defined in the same way as in $\bR P^n$.



The complement of a codimension-one subspace $W$ in $\bR P^n$ can be considered an affine 
space $A^n$ by correspondence 
\[[1, x_1, \dots, x_n] \ra (x_1, \dots, x_n)\] for a coordinate system where $W$ is given by $x_0=0$. 
The group $\Aff(A^n)$ of projective automorphisms acting on $A^n$ is identical with 
the group of affine transformations of form 
\[ \vec{x} \mapsto A \vec{x} + \vec{b} \] 
for a linear map $A: \bR^n \ra \bR^n$ and $\vec{b} \in \bR^n$. 
The projective geodesics and the affine geodesics agree up to parametrizations.

A subset $A$ of $\bR P^n$ or $\SI^n$ {\em spans} a subspace $S$ if $S$ is the smallest subspace containing $A$. 





We will consider an orbifold $\orb$ with a real projective structure: 
This can be expressed as 
\begin{itemize}
\item having a pair $(\dev, h)$ where 
$\dev:\torb \ra \bR P^n$ is an immersion equivariant with respect to 
\item the homomorphism $h: \pi_1(\orb) \ra \PGLnp$ where 
$\torb$ is the universal cover and $\pi_1(\orb)$ is the group of deck transformations acting on $\torb$. 
\end{itemize}
$(\dev, h)$ is only determined up to an action of $\PGLnp$ 
given by 
\[ g \circ (\dev, h(\cdot)) = (g \circ \dev, g h(\cdot) g^{-1}) \hbox{ for } g \in \PGLnp. \]
We will use only one pair where $\dev$ is an embedding for this paper and hence 
identify $\torb$ with its image. 
A {\em holonomy} is an image of an element under $h$. 
The {\em holonomy group} is the image group $h(\pi_1(\orb))$.  

The Klein model of the hyperbolic geometry is given as follows: 
Let $x_0, x_1, \dots, x_n$ denote the standard coordinates of $\bR^{n+1}$. 
\hypertarget{term-Klein-model}{
Let $B$ be the interior in $\bR P^n$  or $\SI^n$ of a standard ball} that is the image of the positive cone of 
$x_0^2 > x_1^2 + \dots + x_n^2$ in $\bR^{n+1}$.
Then $B$ can be identified with a hyperbolic $n$-space. The group of isometries of the hyperbolic space 
equals the group $\Aut(B)$ of projective automorphisms acting on $B$. 
Thus, a complete hyperbolic manifold carries a unique real projective structure and is denoted by $B/\Gamma$ for 
$\Gamma \subset \Aut(B)$.  
Actually, $g(B)$ for any $g \in \PGLnp$ will serve as a Klein model of the hyperbolic space, and 
$\Aut(gB) = g\Aut(B)g^{-1}$ is the isometry group.
(See \cite{Cbook} for details.) 

We also have a lift $\dev': \torb \ra \SI^n$ and $h': \pi_1(\orb) \ra \SLnp$, 
which are also called developing maps and holonomy homomorphisms. 
The discussions below apply to $\bR P^n$ and $\SI^n$ equally. 
This pair also completely determines the real projective structure on $\orb$. 
We will use this pair as $(\dev, h)$. 

Fixing $\dev$, we can identify $\torb$ with $\dev(\torb)$ in $\SI^n$ when $\dev$ is an embedding. 
This identifies $\pi_1(\orb)$ with a group of projective automorphisms $\Gamma$ in $\Aut(\SI^n)$.
The image of $h'$ is still called a {\em holonomy group}.

An orbifold $\orb$ is {\em convex} (resp. {\em properly convex} and {\em complete affine}) 
if $\torb$ is a convex domain (resp. a properly convex domain and an affine subspace.). 

A {\em totally geodesic hypersurface} $A$ in $\torb$ (resp. $\orb$) is 
a subspace of codimension-one where each point $p$ in $A$ has a neighborhood $U$ in $\torb$ (resp. $\orb$) 
with a chart $\phi$ so that $\phi|A$ has the image in a hyperspace. 


Let $\mathcal{A}$ denote the antipodal map $\SI^{n} \ra \SI^{n}$. 
Given a projective structure where $\dev: \torb \ra \bR P^n$ is an embedding to a properly convex 
open subset as in this paper, 
$\dev$ lifts to an embedding $\dev': \torb \ra \SI^n$ to an open domain $D$ without any pair of antipodal 
points. $D$ is determined up to $\mathcal{A}$. 

We will identify $\torb$ with $D$ or $\mathcal{A}(D)$ and 
$\pi_1(\orb)$ with $\bGamma$. 
Then $\bGamma$ lifts to a subgroup $\bGamma'$ of $\SLnp$ acting faithfully and discretely on $\torb$.
There is a unique way to lift so that {$D/\bGamma'$} is projectively diffeomorphic to {$\torb/\bGamma$.} 
Thus, we also define the p-end vertices of p-R-ends of $\torb$ as points in
the boundary of $\torb$ in $\SI^n$ from now on. (see  \cite{conv}.)  



\subsection{Convexity and convex domains}\label{subsec-conv}

\begin{proposition}\label{prop-projconv}
\begin{itemize}
\item A real projective $n$-orbifold is convex if and only if the developing map sends 
the universal cover to a convex domain in $\bR P^n$ {\rm (}resp. $\SI^n${\rm ).} 
\item A real projective $n$-orbifold is properly convex if and only if the developing map sends 
the universal cover to a precompact properly convex open domain in an affine patch of $\bR P^n$
{\rm (}resp. $\SI^{n}${\rm ).}
\item If a convex real projective $n$-orbifold is not properly convex and not complete affine, then
its holonomy is virtually reducible in $\PGL(n+1, \bR)$ {\rm (}resp. $\SLnp${\rm ).} In this case, $\torb$ is foliated 
by affine subspaces $l$ of dimension $i$ with the common boundary $\clo(l) - l$ equal to 
a fixed subspace  $\bR P^{i-1}_{\infty}$ {\rm (}resp. $\SI^{i-1}_{\infty}${\rm )} in $\Bd \torb$. 
\end{itemize}
\end{proposition}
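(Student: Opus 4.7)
My plan is to handle parts (1) and (2) via properties of the developing map on geodesics, and address part (3) by identifying a canonical linear subspace at infinity. For (1), the \textquoteleft if\textquoteright{} direction is immediate from the definition of convex orbifold. For \textquoteleft only if\textquoteright{}, the convexity assumption gives that every arc in $\orb$ is homotopic rel endpoints to a projective geodesic on which $\dev$ is injective. Since $\dev$ is a local projective diffeomorphism carrying projective geodesics to line segments, one concludes that $\dev$ is globally injective on $\torb$ with convex image: given $x, y \in \torb$, their images are joined by a line segment in $\rpn$, and that segment lifts back to the geodesic arc from $x$ to $y$ inside $\torb$. For (2), once (1) is in hand, proper convexity of $\dev(\torb)$ is equivalent by standard convex-geometry arguments to its closure containing no complete projective line, which is in turn equivalent to being contained in a precompact subset of some affine chart.

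For (3), I pass to $\SI^n$ by lifting, so $\torb$ becomes a convex open domain $D$ inside an open hemisphere $H^o$, which I identify affinely with $\bR^n$. Let $C = \{v \in \bR^n : D + \bR_{\geq 0} v \subset D\}$ be the recession cone of $D$, a closed convex cone, and let $V = C \cap (-C)$ be its lineality space. Proper convexity of $D$ corresponds to $V = 0$ and completeness (i.e., $D = \bR^n$ affinely) to $V = \bR^n$; under our hypothesis neither holds, so $1 \leq \dim V < n$. Standard convex geometry then gives $D + V = D$, and the projectivization of $V$ is a projective subspace of $\Bd D$ in $\SI^n$ of dimension $\dim V - 1$, playing the role of $\SI^{i-1}_{\infty}$. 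Since $V$ is canonically associated to $D$, any holonomy element $g \in \bGamma$ preserves $D$ and hence preserves this subspace, yielding reducibility of the holonomy in $\SLnp$ (or $\PGLnp$).

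The foliation is then the orbit decomposition of $D$ under the additive action of $V$: for each $x \in D$, the leaf $L_x := x + V$ is an affine $i$-subspace lying in $D$ with $\clo(L_x) \setminus L_x$ equal to the fixed $\SI^{i-1}_{\infty}$. The main hurdle I anticipate is verifying that every affine line in $D$ parallel to $V$ lies fully in $D$ and that the ideal boundary of each leaf is exactly $\SI^{i-1}_{\infty}$; both follow from the standard lineality-space identity $D + V = D$ for the open convex set $D$, rather than just from statements about rays out of one base point. A secondary check is the canonicity of $V$ so that holonomy invariance is automatic; this is clear once the recession/lineality formalism is in place, since the deck group acts by projective automorphisms that carry $D$ to itself and thus carry recession directions to recession directions.
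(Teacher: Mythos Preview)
Your approach is essentially the same as the paper's. Both arguments identify the foliation of $D$ by maximal complete affine subspaces and observe that their common ideal boundary is a holonomy-invariant projective subspace. The paper phrases this geometrically (antipodal pair in $\clo(D)$ forces a complete affine line in $D$; maximal such subspaces foliate $D$, citing \cite{ChCh} and \cite{GV}), while you use the equivalent lineality-space formalism $V=C\cap(-C)$ and the identity $D+V=D$. Your version is slightly more self-contained since it avoids the external citation.

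One point to tighten: your justification that the holonomy ``carries recession directions to recession directions'' is imprecise, because the recession cone depends on the chosen affine chart $H^o\cong\bR^n$, and the deck group acts by projective (not affine) automorphisms which need not preserve $H$. The fix is to note that the projectivized lineality space is intrinsic to $D\subset\SI^n$: it equals $\{p\in\SI^n : p,\,p_-\in\clo(D)\}$, where $p_-$ is the antipode. Since any $g\in\SLnp$ commutes with the antipodal map and preserves $\clo(D)$, it preserves this set. Equivalently, any complete affine line in $D$ relative to one hemisphere chart is also complete relative to any other, so $\SI^{i-1}_\infty$ is chart-independent. With this clarification your argument is complete.
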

\begin{proof} 
The first item is Proposition A.1 of \cite{psconv}. 
The second follows immediately. 
For the final item, 
a convex subset of $\bR P^n$ is a convex subset of an affine patch $A^n$, isomorphic to an affine space. 
Recall the double covering $\SI^{n} \ra \bR P^n$.
A convex open domain $D$ in $A^n$ lifts to a convex open domain $D' \subset H^{o}$ for 
a closed hemisphere $H$ in $\SI^{n}$.  If $D$ is not properly convex, the closure $\clo(D')$ must have 
a pair of antipodal points in $H$. They must be in $\Bd H$. 
A great open segment $l$ must connect this antipodal pair in $\Bd H$ and pass an interior point of $D'$. 
If a subsegment of $l$ is in $\Bd D'$, then $l$ is in a supporting hyperspace and $l$ does not pass 
an interior point of $D'$. Thus, $l \subset D'$. 
Hence, $D$ contains a complete affine line. 

Thus, $D$ contain a maximal complete affine subspace.  
Two such complete maximal affine subspaces do not intersect since otherwise a larger complete affine subspace of 
higher dimension is in $D$ by convexity. We showed in \cite{ChCh} that the maximal complete affine subspaces
foliated the domain.  (See also \cite{GV}.) The foliation is preserved under the group action 
since the leaves are lower-dimensional complete affine subspaces in $D$. 
This implies that the boundary of the affine subspaces is a lower dimensional subspace. 
These subspaces are preserved under the group action.
\end{proof} 


\begin{definition}\label{defn-join}
Given a convex set $D$ in $\bR P^n$, we obtain a connected cone $C_D$ in $\bR^{n+1}-\{O\}$ mapping to $D$,
determined up to the antipodal map. For a convex domain $D \subset \SI^n$, we have a unique domain $C_D \subset \bR^{n+1}-\{O\}$. 

A \hypertarget{term-join}{{\em join}} of two properly convex subsets $A$ and $B$ in a convex domain $D$ of $\bR P^n$ or $\SI^n$ is defined 
\[A \ast B := \{[ t x + (1-t) y]| x, y \in C_D,  [x] \in A, [y] \in B, t \in [0, 1] \} \]
where $C_D$ is a cone corresponding to $D$ in $\bR^{n+1}$. The definition is independent of the choice of $C_D$. 
\end{definition} 


\begin{definition}
Let $C_1, \dots, C_m$ be cone respectively in a set of independent vector subspaces $V_1, \dots, V_m$ of $\bR^{n+1}$. 
In general, a {\em sum} of convex sets $C_1, \dots, C_m$ in $\bR^{n+1}$ 
in independent subspaces $V_i$ is defined by
\[ C_1+ \dots + C_m := \{v | v = c_1+ \cdots + c_m, c_i \in C_i \}.\]
A \hypertarget{term-qjoin}{{\em strict join}} of convex sets $\Omega_i$ in $\SI^n$ (resp. in $\bR P^n$) is given as 
{
\[\Omega_1 \ast \cdots \ast \Omega_m := \Pi(C_1 + \cdots + C_m) \hbox{ (resp. } \Pi'(C_1 + \cdots + C_m)  ) \]}
where each $C_i-\{O\}$ is a convex cone with image $\Omega_i$ for each $i$. 
\end{definition}
(The join above does depend on the choice of cones.)

\subsubsection{The Benoist theory.} \label{sub-ben}

In late 1990s, Benoist more or less completed the theory 
of the divisible action as started by Benzecri, Vinberg, Koszul, Vey, and so on
in the series of papers \cite{Ben1}, \cite{Ben2}, \cite{Ben3}, \cite{Ben4}, \cite{Ben5}, \cite{Benasym}.
The comprehensive theory will aid us much in this paper. 

\begin{proposition}[Corollary 2.13 \cite{Ben3}]\label{prop-Benoist}  
Suppose that a discrete subgroup $\Gamma$ of $\SLn$ {\rm (}resp. $\PGL(n, \bR)${\rm ),} $n \geq 2$,
acts on a properly convex $(n-1)$-dimensional open domain $\Omega$ in $\SI^{n-1}$ {\rm (}resp, $\bR P^{n-1}${\rm )} 
so that $\Omega/\Gamma$ is compact. Then the following statements are equivalent. 
\begin{itemize} 
\item Every subgroup of finite index of $\Gamma$ has a finite center. 
 \item Every subgroup of finite index of $\Gamma$ has a trivial center. 
\item Every subgroup of finite index of $\Gamma$ is irreducible in $\SLn$ {\rm (}resp. in $\PGL(n, \bR)${\rm ).} 
That is, $\Gamma$ is strongly irreducible. 
\item The Zariski closure of $\Gamma$ is semisimple. 
\item $\Gamma$ does not contain an infinite nilpotent normal subgroup. 
\item $\Gamma$ does not contain an infinite abelian normal subgroup.
\end{itemize}
\end{proposition}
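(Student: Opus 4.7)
The plan is to establish the equivalences via a cycle of implications that alternates between purely algebraic maneuvers (centralizer and characteristic-subgroup tricks) and genuinely geometric steps exploiting the cocompact action of $\Gamma$ on the properly convex $\Omega$. The trivial implications are (2) $\Rightarrow$ (1) and (5) $\Rightarrow$ (6). For (6) $\Rightarrow$ (1), suppose a finite-index $\Gamma_0 \subset \Gamma$ has infinite center $Z$; replace $\Gamma_0$ by $N := \bigcap_{\gamma \in \Gamma}\gamma \Gamma_0 \gamma^{-1}$, which is finite index and normal in $\Gamma$. Then $Z \cap N$ is infinite and contained in $Z(N)$, and $Z(N)$, being characteristic in $N$, is normal in $\Gamma$, producing an infinite abelian normal subgroup, contradicting (6). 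The converse (1) $\Rightarrow$ (6) is symmetric: an infinite abelian normal $A \trianglelefteq \Gamma$ decomposes $\bR^{n}$ into common generalized eigenspaces of $A$, the finite-index subgroup $\Gamma_0$ permuting them trivially centralizes $A$, and so $Z(\Gamma_0) \supseteq A$ is infinite.

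The geometric heart is the chain (6) $\Rightarrow$ (3) $\Rightarrow$ (4) $\Rightarrow$ (5). If $\Gamma$ fails to be strongly irreducible, a finite-index $\Gamma_0$ preserves a proper projective subspace $V \subset \rpno$. Proper convexity plus cocompactness force a Benoist-type join decomposition of $\clo(\Omega)$ attached to the flag determined by $V$, and the one-parameter group of diagonal dilations rescaling the join factors against one another meets $\Gamma$ in an infinite abelian subgroup; canonicity of the decomposition makes this subgroup normal in $\Gamma$, contradicting (6). Once strong irreducibility holds, the Zariski closure $G$ of $\Gamma$ acts irreducibly on $\bR^{n}$: the unipotent radical $R_u(G)$ has a nontrivial fixed vector by Engel's theorem, so by irreducibility it must act trivially, making $G$ reductive; the center of $G$ consists of scalars, which in $\SLn$ is finite, so $G$ is semisimple, giving (4). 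For (4) $\Rightarrow$ (5), the Zariski closure of an infinite nilpotent normal subgroup of $\Gamma$ would be a normal nilpotent subgroup of the semisimple $G$, which forces it to be finite.

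The main obstacle is the geometric implication (6) $\Rightarrow$ (3): extracting an \emph{infinite} abelian normal subgroup of $\Gamma$ from the mere existence of a reducible action of $\Gamma_0$. Without cocompactness, the dilation group can meet $\Gamma$ trivially; without proper convexity, the join decomposition of $\clo(\Omega)$ relative to $V$ degenerates. This step rests on Benoist's suspension theory for convex divisible sets together with an Auslander-type identification of a lattice inside the abelian dilation subgroup. Finally, to upgrade ``finite center'' in (1) to ``trivial center'' in (2): once strong irreducibility is established, a nontrivial finite-order central element of $\Gamma_0$ would fix a proper invariant subspace meeting $\clo(\Omega)$ in a $\Gamma_0$-invariant convex proper subset, which the Vey-type rigidity for cocompact actions on properly convex $\Omega$ forbids.
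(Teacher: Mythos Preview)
The paper does not prove this proposition; its entire argument is to invoke Corollary~2.13 of Benoist~\cite{Ben3} for the $\bR P^{n-1}/\PGL(n,\bR)$ case and then observe that the $\SI^{n-1}/\SLn$ version follows by lifting the properly convex domain and the group action through the double cover. Your proposal is therefore not a variant of the paper's proof but an attempt to reproduce Benoist's original argument.

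Viewed that way, there are two genuine gaps. First, in your (1) $\Rightarrow$ (6) step, the claim that the finite-index subgroup $\Gamma_0$ permuting the common generalized eigenspaces of $A$ trivially must \emph{centralize} $A$ is false without further input: preserving each block only forces $\gamma a\gamma^{-1}$ and $a$ to have the same characters on each block, not to be equal (think of an abelian $A$ with nontrivial unipotent part). You need to know that elements of $A$ are $\bR$-diagonalizable, which in this setting comes precisely from the proper-convexity and cocompactness hypotheses via Benoist's structure theory---so you are already leaning on \cite{Ben3}. Second, and more seriously, your (6) $\Rightarrow$ (3) step explicitly appeals to ``Benoist's suspension theory for convex divisible sets'' to produce the join decomposition and the lattice in the diagonal dilation group. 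That suspension/join statement is exactly the content of Th\'eor\`eme~1.1 and its corollaries in~\cite{Ben3}, of which the proposition you are trying to prove is Corollary~2.13. So the argument is circular: the hard geometric implication is being deduced from the very paper whose corollary you are reproving. If your goal is only what the present paper needs, the honest route is the paper's: cite \cite{Ben3} and record the lifting to $\SI^{n-1}$.
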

\begin{proof}
Corollary 2.13 of \cite{Ben3} considers $\PGL(n, \bR)$ and $\bR P^{n-1}$. 
However, the version for $\SI^{n-1}$ follows from this since we can always lift 
a properly convex domain in $\bR P^{n-1}$ to one $\Omega$ in $\SI^{n-1}$ and 
the group to one in $\SLn$ acting on $\Omega$. 
\end{proof}

The group with properties above is said to be the group with a {\em trivial virtual center}. 

\begin{theorem}[Theorem 1.1 of \cite{Ben3}] \label{thm-Benoist} 
Let $n-1 \geq 1$. 
Suppose that a virtual-center-free discrete subgroup $\Gamma$ of $\SLn$ {\rm (}resp. $\PGL(n, \bR)${\rm )}  acts on 
a strongly tame, properly convex $(n-1)$-dimensional open domain $\Omega \subset \SI^{n-1}$ so 
that $\Omega/\Gamma$ is compact.
Then every representation of a component of $\Hom(\Gamma, \SLn)$ {\rm (}resp. $\Hom(\Gamma, \PGL(n, \bR))${\rm ) }
containing the inclusion representation also acts on a properly convex $(n-1)$-dimensional open domain cocompactly. 
\end{theorem}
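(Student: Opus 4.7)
The plan is to show that the subset
\[
S \;:=\; \{\rho \in \Hom(\Gamma, \SLn) : \rho(\Gamma) \text{ acts cocompactly on some properly convex open } \Omega_\rho \subset \SI^{n-1}\}
\]
is both open and closed in the connected component of $\Hom(\Gamma, \SLn)$ containing the inclusion $\rho_0$. Since $S$ contains $\rho_0$, this will force $S$ to exhaust that component, giving the desired conclusion.

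For openness I would invoke an Ehresmann--Thurston holonomy principle for orbifolds together with Koszul's openness theorem. Given $\rho \in S$ realized as the holonomy of the convex real projective structure on $\Omega_\rho / \rho(\Gamma)$, every sufficiently nearby representation is the holonomy of a nearby real projective structure on the same underlying orbifold; Koszul's theorem asserts that the locus of properly convex structures is open among all real projective structures, so the perturbed representation again acts cocompactly on a properly convex domain and lies in $S$.

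For closedness, I would take a convergent sequence $\rho_i \to \rho$ with $\rho_i \in S$ having domains $\Omega_i \subset \SI^{n-1}$, and extract a Hausdorff limit $\Omega_\infty = \lim \clo(\Omega_i)$, a closed convex subset of $\SI^{n-1}$. Three things must then be verified: (a) $\Omega_\infty$ has nonempty interior and is properly convex; (b) $\rho(\Gamma)$ preserves $\Omega_\infty^o$, which is immediate from continuity of the action on closed subsets; and (c) the action of $\rho(\Gamma)$ on $\Omega_\infty^o$ is cocompact. For (c) I would use the Hilbert metric on each $\Omega_i$: cocompactness of $\rho_i(\Gamma)$ gives a uniform bound on the Hilbert-diameter of a fundamental domain (depending only on the orbifold type and on translation lengths of a fixed finite generating set, which stay bounded along the convergent sequence), and passing to the limit yields a compact fundamental domain inside $\Omega_\infty^o$.

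The main obstacle is step (a), ruling out degeneration of the limit. If $\Omega_\infty$ degenerates, then $\rho(\Gamma)$ preserves either a proper projective subspace, a pair of antipodal points, or, by Proposition \ref{prop-projconv}, a nontrivial foliation of $\Omega_\infty$ by affine subspaces sharing a common lower-dimensional boundary subspace in $\Bd \Omega_\infty$. Each of these outcomes yields either a nontrivial invariant subspace or an infinite abelian normal subgroup (for example the translations along the foliation leaves), contradicting the virtual-center-free hypothesis via the equivalences in Proposition \ref{prop-Benoist}. To apply this contradiction I must transfer virtual-center-freeness from $\rho_0(\Gamma)$ to $\rho(\Gamma)$; this is done by observing that Zariski-semisimplicity of the closure is a constructible condition on the component, preserved under deformation so long as $S$ is nonempty along the path. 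This transfer, together with the careful verification that no algebraic coincidence can force a proper invariant subspace in the limit, is the genuine technical heart of the argument.
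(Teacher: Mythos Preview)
The paper does not prove this theorem at all; it is stated as Theorem~1.1 of Benoist's paper \cite{Ben3} and simply cited, with only the parenthetical remark that Inkang Kim proved the hyperbolic-group case for $n=3$ simultaneously. There is no proof in the paper to compare your proposal against.

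That said, your sketch does follow the broad architecture of Benoist's own argument (Koszul openness plus a closedness argument via Hausdorff limits of domains), so a brief comment on the sketch itself is in order. Your worry about ``transferring virtual-center-freeness from $\rho_0(\Gamma)$ to $\rho(\Gamma)$'' is misplaced: by Proposition~\ref{prop-Benoist}, virtual-center-freeness is equivalent to the abstract group $\Gamma$ having no infinite abelian normal subgroup, and this is a property of $\Gamma$ itself, independent of any representation. No transfer is needed, and the circular-looking appeal to constructibility should be deleted.

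The genuine gap is elsewhere. You propose to rule out degeneration of the limit domain by saying that reducibility of $\rho$ (or a foliation by affine subspaces) contradicts virtual-center-freeness via Proposition~\ref{prop-Benoist}. But the equivalences in that proposition are stated \emph{under the hypothesis} that $\Gamma$ divides a properly convex domain via the given representation --- exactly what you are trying to prove for the limit $\rho$. Nothing prevents an abstract virtual-center-free group from admitting a reducible representation, or one with large kernel. Benoist's actual closedness argument is considerably more delicate: one must control the geometry of the degenerating domains (e.g.\ via uniform estimates on the Hilbert metrics and the Benz\'ecri compactness theorem) to show the limit remains open and properly convex, rather than deducing this from algebra alone. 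Your sketch correctly identifies where the difficulty lies but does not supply the mechanism that resolves it.
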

({When $\Gamma$ is a discrete group of hyperbolic isometries}  and $n=3$, Inkang Kim \cite{ink} proved this simultaneously.)

We call the group such as above theorem a {\em vcf-group}. By above Proposition \ref{prop-Benoist}, 
we see that every representation of the group acts irreducibly.




\begin{proposition}[Theorem 1.1. of Benoist \cite{Ben2}] \label{prop-Ben2} Assume $n \geq 2$. 
Let $\Sigma$ be a closed $(n-1)$-dimensional strongly tame, properly convex projective orbifold
and let $\Omega$ denote its universal cover in $\SI^{n-1}$ {\rm (}resp. $\bR P^{n-1}${\rm ).}  
Then 
\begin{itemize}
\item $\Omega$ is projectively diffeomorphic to the interior of 
a strict join $K:=K_1 * \cdots * K_{l_0}$ where $K_i$ is a properly convex open domain of dimension $n_i \geq 0$ in 
the subspace $\SI^{n_i}$ in $\SI^n$ {\rm (}resp. $\bR P^{n_i}$ in $\bR P^n${\rm )}.
$K_{i}$ corresponds to a convex cone $C_i \subset \bR^{n_i+1}$ for each $i$. 
\item $\Omega$ is the image of {$C_1 + \cdots + C_{l_{0}}$}. 
\item The fundamental group 
$\pi_1(\Sigma)$ is virtually isomorphic to a subgroup of 
$\bZ^{l_0-1} \times \Gamma_1 \times \cdots \times \Gamma_{l_0}$ for 
$(l_0 -1) + \sum_{i=1}^{l_{0}} n_i = n$, where $\Gamma_{i}$ is the image of 
the restriction map of $\pi_{1}(\Sigma)$ on $K_{i}$. 
\item $\pi_{1}(\Sigma)$ acts on $K$ cocompactly and discretely. 
\item 
Each $\Gamma_j$ acts on $K_j$ cocompactly, the Zariski closure $G_{j}$ is an irreducible reductive group, 
and $G_{j}$ acts trivially on $K_m$ for $m \ne j$. 
\item The subgroup corresponding to $\bZ^{l_0-1}$ acts trivially on each $K_j$.
\end{itemize} 
\end{proposition}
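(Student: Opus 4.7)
The plan is to prove the decomposition by reducing to the strongly irreducible case handled by Proposition \ref{prop-Benoist}, via an induction on dimension. Write $\Gamma = \pi_1(\Sigma)$ and let $C \subset \bR^n$ be the properly convex open cone projecting to $\Omega$; then $\Gamma$ lifts to a discrete subgroup of $\SL_\pm(n, \bR)$ acting linearly on $C$ with compact quotient on its projectivization. The goal is to peel off, one by one, the maximal direct-sum splittings of $C$ preserved by $\Gamma$, until each factor has a strongly irreducible action.

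First, as a base case, if every finite-index subgroup of $\Gamma$ is strongly irreducible, Proposition \ref{prop-Benoist} applies directly: $\Gamma$ has trivial virtual center and semisimple Zariski closure, so one takes $l_0 = 1$ and $K_1 = \Omega$, and the conclusion is immediate. Otherwise, a finite-index subgroup $\Gamma' \leq \Gamma$ contains an infinite abelian normal subgroup $N$. The key observation, in the spirit of Vey and Benoist, is that every $\nu \in \Aut(C)$ has a positive real Perron-type eigenvalue with eigenvector in $\clo(C)$, and that normality of $N$ in $\Gamma'$ together with cocompactness forces the common eigenspace decomposition of $N$ to be $\Gamma'$-invariant. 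This produces a non-trivial $\Gamma'$-invariant splitting $\bR^n = V_1 \oplus \cdots \oplus V_{l_0}$ with $l_0 \geq 2$.

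The central step is to show this splitting is compatible with $C$, meaning $\clo(C) = \clo(C_1) + \cdots + \clo(C_{l_0})$ where $C_i := C \cap V_i$ is a properly convex cone in $V_i$. One first argues that each projection of $\clo(C)$ onto a factor $V_i$ lies entirely in a single closed convex cone $\clo(C_i) \subset V_i$, using that orbits of $\Gamma'$ are recurrent toward $\Bd C$ by cocompactness and that the extremal rays of $\clo(C)$ must lie in some $V_i$ by the Perron-eigenvector property of elements of $N$. Convexity of $C$ then forces the sum decomposition. Projecting $\Gamma'$ to $\GL(V_i)$ yields a subgroup $\Gamma_i$ acting cocompactly on $C_i$, and iterating the procedure on each $V_i$ (decreasing strictly the ambient dimension) eventually produces a decomposition in which every $\Gamma_i$ is strongly irreducible, so Proposition \ref{prop-Benoist} identifies each Zariski closure $G_j$ as a reductive irreducible group acting trivially on the other summands by construction.

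Finally, the kernel of the diagonal projection $\Gamma' \to \Gamma_1 \times \cdots \times \Gamma_{l_0}$ consists of elements acting by positive scalars on each $V_i$; the unimodularity relation $\det = \pm 1$ imposes a single linear constraint, so this kernel embeds as a subgroup of $\bZ^{l_0-1}$, giving the virtual isomorphism $\pi_1(\Sigma) \lesssim \bZ^{l_0-1} \times \Gamma_1 \times \cdots \times \Gamma_{l_0}$. Cocompactness of the $\Gamma_i$-action on $K_i$ and discreteness of the $\pi_1(\Sigma)$-action on $K$ follow from the cocompactness of $\Gamma'$ on $C$ together with the splitting. The main obstacle is the middle step: producing the compatible cone decomposition from a merely invariant linear splitting. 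This is the delicate place where one must combine proper convexity, the Perron-Frobenius spectral structure of $\Aut(C)$, and orbit recurrence along $\Bd C$; the linear splitting alone is not enough, and without this one cannot extract the strict join structure or the factor cones.
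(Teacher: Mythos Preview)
The paper does not supply its own proof of this proposition: it is stated as Theorem~1.1 of Benoist~\cite{Ben2} and used as a black box throughout. So there is no in-paper argument to compare against; what you have written is effectively a sketch of how Benoist's original proof proceeds.

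Your outline is in the right spirit --- induction on dimension, peeling off invariant summands until each factor carries a strongly irreducible action --- and you correctly flag the genuinely hard step. But the sketch as written has a real gap at precisely that point. From the existence of an infinite abelian normal subgroup $N \lhd \Gamma'$ you pass directly to a ``common eigenspace decomposition'' $\bR^n = V_1 \oplus \cdots \oplus V_{l_0}$; this presupposes that $N$ is simultaneously diagonalisable over $\bR$, which does not follow from $N$ being abelian. The Perron--Frobenius statement only gives each $\nu \in \Aut(C)$ a positive eigenvalue with eigenvector in $\clo(C)$; it says nothing about semisimplicity of $\nu$, let alone of all of $N$ at once. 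Likewise, the claim that extremal rays of $\clo(C)$ must lie in some $V_i$ ``by the Perron-eigenvector property of elements of $N$'' is not justified: Perron eigenvectors are extremal, but the converse is what you need.

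In Benoist's actual argument the missing ingredient is the Koszul--Vey--Benoist result that the Zariski closure $G$ of a divisible group $\Gamma \subset \Aut(C)$ is reductive (this is where the Vinberg characteristic function and the associated Hessian metric do the work). Reductivity gives a torus in the centre of $G^0$ acting by honest positive diagonal matrices, and it is the weight decomposition of this torus --- not of an arbitrary abelian normal $N \subset \Gamma'$ --- that produces the $\Gamma'$-invariant splitting compatible with $C$. Once that is in hand, the rest of your sketch (projection to the factors, identification of the kernel with a lattice in $\bR^{l_0-1}$, and the virtual product structure) goes through essentially as you describe.
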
 

\subsection{The flexibility of boundary} 

The following lemma gives us some flexibility of boundary.  
A smooth hypersurface embedded in a real projective manifold is called {\em strictly convex} if 
under a chart to an affine subspace, it maps to a hypersurface which is defined by a real function with positive Hessians 
at points of the hypersurface.

\begin{lemma} \label{lem-pushing} 
Let $M$ be a strongly tame or compact properly convex real projective orbifold with strictly convex $\partial M$.
We can modify $\partial M$ inward $M$ and the result bound 
a strongly tame or compact properly convex real projective orbifold $M'$ with strictly convex $\partial M'$
\end{lemma}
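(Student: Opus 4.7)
The plan is to push $\partial M$ inward by a small equivariant amount, exploiting compactness of $\partial M$ (guaranteed by strong tameness) and the $C^{2}$-openness of strict projective convexity.

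First, I would pass to the universal cover $\tilde M$, a properly convex domain in $\bR P^{n}$ whose boundary $\partial \tilde M$ is a $\pi_{1}(M)$-invariant disjoint union of smooth strictly convex hypersurfaces. Choose a smooth $\pi_{1}(M)$-invariant Riemannian metric $g$ on a collar neighborhood of $\partial \tilde M$ in $\tilde M$ (obtained by averaging a local Riemannian metric over a compact fundamental domain for the action on the collar), and let $\nu$ denote the inward $g$-unit normal along $\partial \tilde M$. Let $\Phi_{t}\colon \partial \tilde M \to \tilde M$ be the time-$t$ flow of $\nu$. For small $t>0$, $\Phi_{t}$ is a $\pi_{1}(M)$-equivariant diffeomorphism onto its image $\tilde\Sigma_{t}$, and the family $\{\tilde\Sigma_{s}\}_{0 \le s \le t}$ foliates a $\pi_{1}(M)$-invariant collar neighborhood of $\partial \tilde M$.

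Second, I would verify that $\tilde\Sigma_{t}$ is strictly convex in the projective sense for all sufficiently small $t$. Cover $\partial M$ by finitely many projective charts $(U_{\alpha},\phi_{\alpha})$ in which each component of $\partial M \cap U_{\alpha}$ is the graph $x_{n}=f_{\alpha}(x')$ of a smooth function with uniformly positive-definite Hessian. In these charts $\tilde\Sigma_{t}$ is a $C^{\infty}$-small perturbation of $\partial \tilde M$ (for small $t$), so its defining graphing functions still have positive-definite Hessian; because this is an open condition in the $C^{2}$-topology and $\partial M$ is compact, a uniform $t_{0}>0$ exists so that $\tilde\Sigma_{t}$ is strictly projectively convex for $0<t<t_{0}$.

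Third, I would take $\tilde M'_{t}$ to be the closure of the component of $\tilde M-\tilde\Sigma_{t}$ disjoint from $\partial \tilde M$. Since $\tilde\Sigma_{t}$ is $C^{2}$-close to the globally convex hypersurface $\partial \tilde M$ with uniformly positive-definite second fundamental form in projective charts, global convexity of $\tilde\Sigma_{t}$ as the boundary of $\tilde M'_{t}$ persists; proper convexity of $\tilde M'_{t}$ is inherited from the inclusion $\tilde M'_{t}\subset \tilde M$. The quotient $M':=\tilde M'_{t}/\pi_{1}(M)$ is then the desired strongly tame (or compact) properly convex real projective orbifold with strictly convex boundary.

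The main obstacle is the leap from pointwise strict convexity of $\tilde\Sigma_{t}$ to global convexity of the domain bounded by it; this is resolved by the uniformity afforded by compactness of $\partial M$, which ensures that a globally convex, strictly convex hypersurface survives sufficiently small $C^{2}$-perturbation. The remaining steps---equivariant pushing, local chart construction, and the verification that $M'$ is strongly tame---are routine, strong tameness of $M'$ following because the modification is concentrated in a collar of $\partial M$ and leaves the end structure unchanged.
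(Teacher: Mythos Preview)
Your approach is essentially the same as the paper's: push $\partial M$ inward along a small flow, use compactness of $\partial M$ and the $C^{2}$-openness of strict convexity to keep the new boundary strictly convex, then argue global convexity of the resulting domain.

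The one place where the paper is cleaner is the local-to-global step. You argue that global convexity of the hypersurface ``persists'' under uniformly small $C^{2}$-perturbation; this is plausible but a bit delicate since $\partial\tilde M$ is generally noncompact, so one has to be careful that uniform $C^{2}$-smallness (guaranteed by equivariance and compactness of $\partial M$) really suffices. The paper sidesteps this by observing that the new domain $\Omega'\subset\Omega$ has $\Bd\Omega'\cap\Omega=\partial\Omega'$ strictly convex, hence $\Omega'$ is \emph{locally} convex, and then invokes the standard fact (from Thurston's notes) that a locally convex closed subset of a convex set is itself convex. Proper convexity then follows immediately from $\Omega'\subset\Omega$. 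This ``locally convex in convex $\Rightarrow$ convex'' principle gives a one-line passage from local to global and avoids any perturbation analysis of the possibly noncompact hypersurface upstairs; you may find it a useful tool to keep in hand.
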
 
\begin{proof}
Let $\Omega$ be a properly convex domain covering $M$. 
We may modify $M$ by pushing $\partial M$ inward. 
We take an arbitrary inward vector field defined on a tubular neighborhood of $\partial M$.
(See Section 4.4 of \cite{Cbook} for the definition of the tubular neighborhoods.)
We use the flow defined by them to modify $\partial M$. By the $C^{2}$-convexity condition, 
for sufficiently small change the image of $\partial M$ is still strictly convex and smooth. 
Let the resulting compact $n$-orbifold be denoted by $M'$.
$M'$ is covered by a subdomain $\Omega'$ in $\Omega$. 

Since $M'$ is a compact subset of $M$, 
$\Omega'$ is a properly imbedded domain in $\Omega$ 
and thus, $\Bd \Omega' \cap \Omega = \partial \Omega'$. 
$\partial \Omega'$ is a strictly convex hypersurface since so is $\partial M'$.
This means that $\Omega'$ is locally convex. 
{A locally convex subset in the Hilbert metric of $M^{o}$ is convex}  (see \cite{Thnote}).
Hence, $\Omega'$ is convex and hence is properly convex being a subset of a properly convex domain. 
So is $M'$. 
\end{proof}

Thus, by choosing one in the interior, we may assume without loss of generality that a strictly convex 
boundary component can be pushed out 
to a strictly convex boundary component. 

\subsection{A needed lemma} 
The following will be used in many of our papers. 

\begin{lemma} \label{lem-simplexbd}
Let $\orb$ be a strongly tame properly convex real projective orbifold. 
Let $\sigma$ be a convex domain in $\clo(\torb) \cap P$ for a subspace $P$. 
Then either $\sigma \subset  \Bd \torb$ or $\sigma^o$ is in $\torb$. 
\end{lemma}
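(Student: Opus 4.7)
The plan is to argue by a dichotomy based on whether $\sigma$ meets the open set $\torb$ or not. If every point of $\sigma$ lies in $\Bd \torb$, we are done; otherwise we use convexity to propagate the ``interior'' property from a single witness point throughout $\sigma^o$.

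First I would assume $\sigma \not\subset \Bd \torb$ and fix a point $p \in \sigma$ with $p \in \torb$, using that $\torb$ is an open properly convex domain in $\rpn$ (or $\SI^n$) so that $\clo(\torb) \setminus \Bd \torb = \torb$. Let $q \in \sigma^o$ be arbitrary; the goal is to show $q \in \torb$.

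Next, I would produce a point $r \in \sigma$ such that $q$ lies strictly between $p$ and $r$. Let $\ell$ be the line in the affine span $\mathrm{aff}(\sigma)$ through $p$ and $q$, and let $I := \ell \cap \sigma$. Then $I$ is a convex subset of $\ell$, hence an interval, containing both $p$ and $q$. Since $q \in \sigma^o$ and $\ell$ meets every neighborhood of $q$ in $\mathrm{aff}(\sigma)$, the point $q$ is an interior point of $I$ in $\ell$. Therefore we can choose $r \in I \subset \sigma$ on the side of $q$ opposite to $p$, so that $q$ lies in the open segment $(p,r)$.

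Finally, I would invoke the standard convex-geometry fact that for a convex open subset $C$ of an affine patch, if $p \in C$ and $r \in \clo(C)$, then the half-open segment $[p, r)$ is contained in $C$. Applying this with $C = \torb$, and using $p \in \torb$, $r \in \sigma \subset \clo(\torb)$, we conclude $q \in (p,r) \subset \torb$. Since $q \in \sigma^o$ was arbitrary, this gives $\sigma^o \subset \torb$, as required.

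The only delicate step is the second one, ensuring that the segment from $p$ through $q$ can be extended a bit beyond $q$ while remaining in $\sigma$; this is really just the definition of relative interior applied to the one-dimensional slice $I$. Everything else is a straightforward application of convexity of $\torb$, so I do not expect any substantive obstacle.
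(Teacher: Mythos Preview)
Your proof is correct and takes a somewhat different route from the paper's. The paper argues by contradiction: assuming $\sigma^o$ meets both $\torb$ and $\Bd\torb$, it locates a segment $s\subset\sigma^o$ with one half in $\Bd\torb$ and the other in $\torb$, then perturbs $s$ to obtain a segment in $\clo(\torb)$ whose interior meets $\Bd\torb$ while its endpoints do not, contradicting convexity of $\torb$ via an external reference (Theorem~A.2 of \cite{psconv}). Your direct argument---pick $p\in\sigma\cap\torb$, extend the chord through any $q\in\sigma^o$ slightly past $q$ to some $r\in\sigma\subset\clo(\torb)$, and invoke the standard fact $[p,r)\subset\torb$ for open convex $\torb$---is more elementary and entirely self-contained; it avoids the perturbation step and the citation, and it makes the role of the relative interior transparent. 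The paper's contradiction argument instead emphasizes the topological dichotomy inside $\sigma^o$, which is closer in spirit to how the lemma gets used later. One minor remark: you should dispose of the case $p=q$ (which is trivial) before introducing the line $\ell$.
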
 
\begin{proof} 
Suppose that $\sigma^o$ meets 
$\Bd \tilde {\mathcal{O}}$ and is not contained in it entirely.  
Since the complement of $\sigma^{o}\cap \Bd \torb$ is a relatively open set in $\sigma^{o}$, 
we can find a segment $s \subset \sigma^o$ with a point $z$ so that a component $s_1$ of $s-\{z\}$ 
is in $ \Bd \tilde {\mathcal{O}}$ and the other component $s_2$ is disjoint from it. 
We may perturb $s$ in the subspace containing $s$ and 
$\bv_{\tilde E}$ so that the new segment $s' \subset \clo(\torb)$ 
meets $\Bd \tilde {\mathcal {O}}$ only in its interior point. 
This contradicts the fact that $\tilde {\mathcal{O}}$ is convex by Theorem A.2 of \cite{psconv}. 
\end{proof}

\section{The duality of real projective orbifolds}\label{sec-duality} 

The duality is a natural concept in real projective geometry and it will continue to play an important role 
in this theory as well. 

\subsection{The duality} 
We start from linear duality. Let $\Gamma$ be a group of linear transformations $\GL(n+1, \bR)$. 
Let $\Gamma^{\ast}$ be the {\em affine dual group} defined by $\{g^{\ast -1}| g \in \Gamma \}$. 

Suppose that $\Gamma$ acts on a properly convex cone $C$ in $\bR^{n+1}$ with the vertex $O$.
An open convex cone  $C^{\ast}$ in $\bR^{n+1, \ast}$  is {\em dual} to an open convex cone $C $ in $\bR^{n+1}$  if 
$C^{\ast} \subset \bR^{n+1 \ast}$ is the set of linear functionals taking positive values on $\clo(C)-\{O\}$.
{$C^{\ast}$ is a cone with the origin as the vertex.}
Note $(C^{\ast})^{\ast} = C$. 

Now $\Gamma^{\ast}$ will acts on $C^{\ast}$.
A {\em central dilatational extension} $\Gamma'$ of $\Gamma$ by $\bZ$ is given by adding a dilatation by a scalar 
$s \in \bR_+ -\{1\}$ for the set $\bR_+$ of positive real numbers. 
The dual $\Gamma^{\prime \ast}$ of $\Gamma'$ is a central dilatation extension of $\Gamma^{\ast}$. 
 Also, if $\Gamma'$ is cocompact on $C$ if and only if $\Gamma^{\prime \ast}$ is on $C^{\ast}$. 
 (See \cite{wmgnote} for details.)

 Given a subgroup $\Gamma$ in $\PGL(n+1, \bR)$, a {\em lift} in $\GL(n+1, \bR)$ is any subgroup that maps to $\Gamma$ bijectively.
 Given a subgroup $\Gamma$ in $\Pgl$, the dual group $\Gamma^{\ast}$ is the image in $\Pgl$ of the dual of 
 any linear lift of $\Gamma$. 

A properly convex open domain $\Omega$ in $P(\bR^{n+1})$ is {\em dual} to a properly convex open domain
$\Omega^{\ast}$ in $P(\bR^{n+1, \ast})$ if $\Omega$ corresponds to an open convex cone $C$ 
and $\Omega^{\ast}$ to its dual $C^{\ast}$. \hypertarget{term-dual}{We say that $\Omega^{\ast}$ is dual to $\Omega$}. 
We also have $(\Omega^{\ast})^{\ast} = \Omega$ and $\Omega$ is properly convex if and only if so is $\Omega^{\ast}$. 

We call $\Gamma$ a \hypertarget{term-divisible}{{\em divisible group}} if a central dilatational extension acts cocompactly on $C$.
$\Gamma$ is divisible if and only if so is $\Gamma^{\ast}$. 

We define $\SI^{n\ast} := \bR^{n+1 \ast} -\{O\}/\sim$ 
where $\vec{v} \sim \vec{w}$ iff $\vec{v} = s\vec{w}$ for $s \in \bR_+$. 

For an open properly convex subset $\Omega$ in $\SI^{n}$, the dual domain is defined as the quotient  in $\SI^{n\ast}$
of the dual cone of the cone $C_\Omega$ corresponding to $\Omega$. 
The dual set $\Omega^{\ast}$ is also open and properly convex
but the dimension may not change.  
Again, we have $(\Omega^{\ast})^{\ast} =\Omega$. 

Given a properly convex domain $\Omega$ in $\SI^n$ (resp. $\bR P^n$), 
we define the \hypertarget{term-augbd}{{\em augmented boundary}} of $\Omega$
\begin{multline} 
\Bd^{\Ag} \Omega  := \{ (x, h)| x \in \Bd \Omega, x \in h,  \\ 
h \hbox{ is an oriented supporting hyperplane of } \Omega \}  
\subset \SI^{n}\times \SI^{n\ast}.
\end{multline}
Define the projection \[\Pi_{\Ag}: \Bd^{\Ag} \Omega  \ra \Bd \Omega\] 
by $(x,h) \ra x$.
Each $x \in \Bd \Omega$ has at least one oriented supporting hyperspace.
An oriented hyperspace is an element of $\SI^{n \ast}$ since it is represented as a linear functional. 
Conversely, an element of $\SI^n$ represents an oriented hyperspace in $\SI^{n \ast}$. 
(Clearly, we can do this for $\bR P^{n}$ and the dual space $\bR P^{n \ast}$).

\begin{theorem} \label{thm-Serre} 
Let $A$ be a subset of $\Bd \Omega$. Let $A':= \Pi_{\Ag}^{-1}(A)$ be the subset of $\Bd^{\Ag}(A)$. 
Then $\Pi_\Ag| A': A' \ra A$ is a quasi-fibration. 
\end{theorem}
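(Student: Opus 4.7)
The plan is to identify each fiber of $\Pi_\Ag$ with the set of oriented supporting hyperplanes at the corresponding boundary point, show that these fibers are convex and contractible, and then verify the quasi-fibration property by a local deformation-retraction argument using the fiberwise convex structure.

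First, for each $x \in A \subset \Bd \Omega$, I would identify the fiber $(\Pi_\Ag\vert A')^{-1}(x)$ with the set of oriented supporting hyperplanes to $\Omega$ at $x$. Choosing a lift $\hat x \in \bR^{n+1}$, this fiber is
\[ F_x = \{ [\ell] \in \SI^{n \ast} \mid \ell(\hat x) = 0,\ \ell \geq 0 \text{ on } \clo(C_\Omega) \}, \]
where $C_\Omega$ is the cone over $\Omega$ in $\bR^{n+1}$. Since $\Omega$ is properly convex, $F_x$ is the spherical projection of a nonzero pointed convex cone in $\bR^{n+1 \ast}$; hence it is nonempty (by the supporting hyperplane theorem), compact (by proper convexity of $\Omega^\ast$), and geodesically convex in $\SI^{n \ast}$. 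In particular each $F_x$ is contractible, via the straight-line homotopy to any chosen basepoint carried out in the linear cone and then projected back to $\SI^{n \ast}$.

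Second, I would check that $\Bd^{\Ag}\Omega$ is a closed subset of $\SI^n \times \SI^{n \ast}$ (each defining condition $x \in \clo(\Omega)$, $\ell(x) = 0$, $\ell \geq 0$ on $\clo(\Omega)$ is closed) and that $\Pi_\Ag$ is continuous, being the restriction of the projection to the first factor. For the quasi-fibration property itself, I would invoke the Dold--Thom criterion: cover $A$ by small open sets $U$ admitting a continuous selection $s\colon U \to (\Pi_\Ag\vert A')^{-1}(U)$, available because $x \mapsto F_x$ is an upper-semicontinuous convex-valued multifunction in the Hausdorff topology, so a Michael-type selection applies. Fiberwise straight-line homotopies inside each $F_x$, performed linearly in $\bR^{n+1 \ast}$ and projected to $\SI^{n \ast}$, then deformation retract $(\Pi_\Ag\vert A')^{-1}(U)$ onto the image of $s$; this gives local quasi-triviality and so the desired quasi-fibration property.

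The main obstacle will be handling the jumps in fiber dimension at non-smooth boundary points: at a vertex or edge of $\clo(\Omega)$, the fiber $F_x$ can be positive-dimensional while fibers over nearby $C^1$ points are singletons. This upper-semicontinuous jumping rules out honest local triviality, and the convex contractibility of fibers together with their Hausdorff-continuous variation is precisely what lets one replace local triviality by the quasi-fibration condition. A secondary technical point is that $A$ is allowed to be an arbitrary subset of $\Bd \Omega$; this is harmless since the fiberwise deformations live inside each $F_x$ and are intrinsic to the supporting hyperplane data, so the local retraction argument restricts cleanly to $\Pi_\Ag^{-1}(A)$ without requiring $A$ to be open or closed.
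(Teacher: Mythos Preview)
Your overall strategy matches the paper's: both recognize that each fiber $F_x$ is a compact convex (hence contractible) set of supporting hyperplanes, and both try to produce a continuous section of $\Pi_{\Ag}$ in order to fiberwise contract onto its image. The paper does this by taking, in a Euclidean affine chart containing $\clo(\Omega)$, the center of mass of each fiber $F_x$ viewed as a convex set of unit normal vectors; you instead invoke a Michael-type selection.

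There is, however, a real gap in your selection step. Michael's theorem requires the set-valued map $x \mapsto F_x$ to be \emph{lower}-semicontinuous, whereas the normal-cone map of a convex body is only \emph{upper}-semicontinuous: if $x_n \to x$ and $h_n \in F_{x_n}$ with $h_n \to h$, then $h \in F_x$, but a given $h \in F_x$ need not be a limit of elements of $F_{x_n}$. Concretely, take $\Omega$ a planar square and $x$ a vertex: $F_x$ is a quarter-arc in $S^1$, but for edge points $x_n \to x$ the fibers $F_{x_n}$ are single points converging to one \emph{endpoint} of that arc, so no continuous section through $x$ exists at all. The same example shows that the paper's center-of-mass section is not literally continuous either, so you are in good company; both arguments are really leaning on the contractibility of the fibers rather than on a bona fide global section. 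To make the argument rigorous you could run the Dold--Thom criterion along the face stratification of $\Bd\Omega$, checking the required deformation-retraction condition across strata, or argue that a proper surjection with compact convex fibers between finite-dimensional ANRs is a cell-like map and hence a hereditary weak homotopy equivalence, which yields the quasi-fibration property directly.
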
 
\begin{proof}
We take a Euclidean metric on an affine space containing $\clo(\Omega)$. 
The supporting hyperplanes can be identified with unit vectors. 
Each fiber $\Pi_\Ag^{-1}(x)$ is 
a properly convex compact domain in a sphere of unit vectors through $x$. 
We find a continuous unit vector field to $\Bd \Omega$ by taking the center of mass of each fiber with respect 
to the Euclidean metric. This gives a local coordinate system on each fiber by giving the origin, and each fiber is a convex domain 
containing the origin. Then the quasi-fibration property is clear now.
\end{proof}

\begin{remark} 
We notice that for open properly convex domains $\Omega_1$ and $\Omega_2$ in $\SI^n$ 
(resp. in $\bR P^n$) we have 
\begin{equation}\label{eqn-dualinc}
\Omega_1 \subset \Omega_2 \hbox{ if and only if } \Omega_2^{\ast} \subset \Omega_1^{\ast}  
\end{equation}
\end{remark}

\begin{remark} 
Given a strict join $A\ast B$ for a properly convex compact $k$-dimensional domain $A$ and a properly convex compact $n-k-1$-dimensional domain $B$, 
\begin{equation}\label{eqn-dualjoin} 
(A \ast B)^{\ast} = A^{\ast} \ast B^{\ast}.
\end{equation} 
This follows from the definition and 
 realizing every linear functional as a sum of linear functionals in the direct-sum subspaces.
\end{remark}

An element $(x, h)$ is $\Bd^{\Ag} \Omega$ if and only if $x \in \Bd \Omega$ and $h$ is represented 
by a linear functional $\alpha_h$ so that $\alpha_h(y) > 0$ for all $y$ in the open cone $C$ corresponding to $\Omega$ and 
$\alpha_h(v_x) =0$ for a vector $v_x$ representing $x$. 

Let $(x, h) \in \Bd^{\Ag} \Omega$. 
Since the dual cone $C^{\ast}$ consists of all nonzero $1$-form $\alpha$ so that $\alpha(y) > 0$ for all $y \in \clo(C) - \{O\}$. 
Thus, $\alpha(v_x) > 0$ for all $\alpha \in C^{\ast}$ and $\alpha_y(v_x) = 0$. 
$\alpha_h \not\in C^{\ast}$ since $v_x \in \clo( C)-\{O\}$. 
But $h \in \Bd \Omega^{\ast}$ as we can perturb $\alpha_h$ so that it is in $C^{\ast}$. 
Thus, $x$ is a supporting hyperspace at $h \in \Bd \Omega^{\ast}$.
Hence we obtain a continuous map ${\mathcal{D}}_{\Omega}: \Bd^{\Ag} \Omega \ra \Bd^{\Ag} \Omega^{\ast}$. 
We define a \hypertarget{term-dualitymap}{{\em duality map}} 
\[ {\mathcal{D}}_{\Omega}: \Bd^{\Ag} \Omega \leftrightarrow \Bd^{\Ag} \Omega^{\ast} \] 
given by sending $(x, h)$ to $(h, x)$ for each $(x, h) \in \Bd^{\Ag} \Omega$. 



\begin{proposition} \label{prop-duality}
Let $\Omega$ and $\Omega^{\ast}$ be dual domains in $\SI^{n}$ and $\SI^{n \ast}$ {\rm (}resp. $\bR P^{n}$ 
and $\bR P^{n \ast}${\rm ).} 
\begin{itemize}  
\item[(i)] There is a proper quotient map $\Pi_{\Ag}: \Bd^{\Ag} \Omega \ra \Bd \Omega$
given by sending $(x, h)$ to $x$. 
\item[(ii)] Let a projective automorphism group 
$\Gamma$ acts on a properly convex open domain $\Omega$ if and only 
$\Gamma^{\ast}$ acts on $\Omega^{\ast}$.
\item[(iii)] There exists a duality map 
\[ {\mathcal{D}}_{\Omega}: \Bd^{\Ag} \Omega \leftrightarrow \Bd^{\Ag} \Omega^{\ast} \] 
which is a homeomorphism. 
\item[(iv)] Let $A \subset \Bd^{\Ag} \Omega$ be a subspace and $A^{\ast}\subset \Bd^{\Ag} \Omega^{\ast}$
be the corresponding dual subspace $\mathcal{D}_{\Omega}(A)$. A group $\Gamma$ acts on $A$ so that $A/\Gamma$ is compact 
if and only if $\Gamma^{\ast}$ acts on $A^{\ast}$ and $A^{\ast}/\Gamma^{\ast}$ is compact. 
\end{itemize} 
\end{proposition}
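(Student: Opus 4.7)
The plan is to handle the four items in order, with most of the real content already set up in the paragraphs preceding the statement; the work is to assemble these ingredients carefully and to verify equivariance of the duality map.

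For (i), I would fix a Euclidean metric on an affine chart containing $\clo(\Omega)$ as in the proof of Theorem \ref{thm-Serre}, so that the fiber $\Pi_{\Ag}^{-1}(x)$ over $x \in \Bd \Omega$ is identified with a compact properly convex subset of the unit sphere of directions through $x$. Properness then follows: if $K \subset \Bd \Omega$ is compact, the union of its fibers is closed in the compact space $\Bd^\Ag \Omega \subset \SI^n \times \SI^{n\ast}$, hence compact. Continuity of $\Pi_\Ag$ is immediate from its definition as a coordinate projection, and surjectivity is the standard existence of at least one supporting hyperplane through each boundary point of a convex set.

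For (ii), I would lift to linear cones: $g \in \GL(n{+}1,\bR)$ preserves the cone $C$ associated to $\Omega$ if and only if its inverse transpose $g^{\ast-1}$ preserves the dual cone $C^\ast$, because the defining condition of $C^\ast$ (positivity on $\clo(C) - \{O\}$) is symmetric in $C$ and $C^\ast$ under the pairing, and passing to the projective quotient gives the statement for $\Gamma$ and $\Gamma^\ast$.

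For (iii), I would verify that the map $\mathcal{D}_\Omega$ defined in the paragraph above the proposition is well-defined, i.e., that when $(x,h) \in \Bd^\Ag \Omega$, the pair $(h,x)$ lies in $\Bd^\Ag \Omega^\ast$. This is exactly what is sketched in the preceding text: choose representatives $v_x \in \clo(C)-\{O\}$ and $\alpha_h \in \clo(C^\ast)-\{O\}$ with $\alpha_h(v_x) = 0$; then $\alpha_h$ lies on $\Bd \Omega^\ast$ (it cannot be strictly positive on all of $\clo(C)-\{O\}$ since it vanishes at $v_x$) and $v_x$ vanishes on $\alpha_h$, so $v_x$ represents a supporting hyperplane at $h \in \Bd \Omega^\ast$. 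Continuity of $\mathcal{D}_\Omega$ is clear from the defining coordinates on $\SI^n \times \SI^{n\ast}$, and because $(\Omega^\ast)^\ast = \Omega$, the analogous construction produces $\mathcal{D}_{\Omega^\ast}$, which is a two-sided inverse by the identification $(x,h) \leftrightarrow (h,x)$. Hence $\mathcal{D}_\Omega$ is a homeomorphism.

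For (iv), the key remark is that the duality map is equivariant: if $g \in \Gamma$ acts on $\Omega$ and carries $(x,h)$ to $(gx, gh)$, where $gh$ denotes the image of the hyperplane under $g$, then $\mathcal{D}_\Omega(gx, gh) = (gh, gx) = g^\ast \cdot (h,x)$ where $g^\ast := (g^T)^{-1}$ acts on $\SI^{n\ast}$ via pullback of linear functionals (this is the same normalization used in (ii)). Thus $\mathcal{D}_\Omega$ conjugates the $\Gamma$-action on $\Bd^\Ag \Omega$ to the $\Gamma^\ast$-action on $\Bd^\Ag \Omega^\ast$, and in particular carries any invariant subspace $A$ homeomorphically onto its dual $A^\ast$. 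Since $\mathcal{D}_\Omega$ is a homeomorphism, $A/\Gamma$ is compact iff $A^\ast/\Gamma^\ast$ is compact.

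The only step that needs any genuine care is the well-definedness in (iii), i.e., checking that the functional--vector roles really do interchange at the level of supporting data; everything else is an unpacking of definitions together with Theorem \ref{thm-Serre} for the quasi-fibration statement in (i). I expect no substantive obstacle beyond bookkeeping of the lifts between $\SI^n$ and its dual.
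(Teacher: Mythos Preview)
Your proposal is correct and follows essentially the same approach as the paper's own proof: both establish (i) via compactness of fibers and properness of the coordinate projection, treat (ii) by lifting to cones and using the symmetry of the dual-cone definition, obtain (iii) from the mutual-inverse relation $\mathcal{D}_{\Omega^\ast} = \mathcal{D}_\Omega^{-1}$, and deduce (iv) from (ii) and (iii). The only difference is one of detail: the paper dispatches (iv) in a single line (``clear from (ii) and (iii)''), whereas you spell out the equivariance $\mathcal{D}_\Omega(g\cdot(x,h)) = g^\ast\cdot\mathcal{D}_\Omega(x,h)$ explicitly, and for (i) the paper notes openness of $\Pi_{\Ag}$ to get the quotient property while you implicitly rely on properness giving a closed map --- both routes are fine.
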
 
\begin{proof} 
We will prove for $\bR P^n$ but the same proof works for $\SI^n$. 
(i) Each fiber is a closed set of hyperplanes at a point forming a compact set.
The set of supporting hyperplanes at a compact subset of $\Bd \Omega$ is closed. 
The closed set of hyperplanes having a point in a compact subset of {$\bR P^{n}$} is compact. 
Thus, $\Pi_{\Ag}$ is proper.  Clearly, $\Pi_{\Ag}$ is continuous, and it is an open map
since $\Bd^{\Ag} \Omega$ is given the subspace topology from $\bR P^n \times \bR P^{n \ast}$
with a box topology where $\Pi_{\Ag}$ extends to a projection.

(ii) Straightforward. (See Chapter 6 of \cite{wmgnote}.)

(iii) ${\mathcal{D}}_{\Omega}$ has the inverse map ${\mathcal{D}}_{\Omega^{\ast}}$. 

(iv) The item is clear from (ii) and (iii).  
\end{proof} 

\begin{definition}
The two subgroups $G_1$ of $\Gamma$ and $G_2$ of $\Gamma^{\ast}$ are {\em dual} if 
sending $g \ra g^{-1, T}$ gives us an isomorphism $G_1 \ra G_2$. 
\hypertarget{term-dualset}{A set in $A \subset \Bd \Omega$ is {\em dual}} to a set $B \subset \Bd \Omega^{\ast}$ if 
 $\mathcal{D}: \Pi_\Ag^{-1}(A) \ra \Pi_{\Ag}^{-1}(B)$ is a one-to-one and onto map. 
\end{definition}



\subsection{The duality of convex real projective orbifolds with strictly convex boundary} 

We have $\orb = \Omega/\Gamma$ for an open properly convex domain $\Omega$, 
the dual orbifold $\orb^{\ast} = \Omega^{\ast}/\Gamma^{\ast}$ is a properly convex real projective orbifold. 
The dual orbifold is well-defined up to projective diffeomorphisms. 




\begin{theorem}[Vinberg]  \label{thm-dualdiff} 
Let $\orb$ be a strongly tame properly convex real projective open orbifold. 
The dual orbifold $\orb^{\ast}$ is diffeomorphic to $\orb$.
\end{theorem}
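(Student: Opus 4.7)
The plan is to produce an explicit equivariant diffeomorphism between the universal covers $\Omega$ and $\Omega^{\ast}$ via the Koszul--Vinberg characteristic function, then descend to the quotient orbifolds.

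First, I would pass to the cone picture. Write $\orb = \Omega/\bGamma$ with $\Omega \subset \rpn$ properly convex, and let $C \subset \bR^{n+1}$ be an open properly convex cone projecting to $\Omega$; then the dual cone $C^{\ast} \subset \bR^{n+1 \ast}$ projects to $\Omega^{\ast}$. A linear lift of $\bGamma$ acts on $C$, and its contragredient $g \mapsto g^{-T}$ preserves $C^{\ast}$; modulo scalars this realizes the action of $\bGamma^{\ast}$ on $\Omega^{\ast}$ from Proposition~\ref{prop-duality}.

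The central tool is the Koszul--Vinberg characteristic function
\[
\phi_C(x) := \int_{C^{\ast}} e^{-\langle x, \xi\rangle}\, d\xi, \qquad x \in C,
\]
which is smooth, strictly convex, positive, and blows up as $x \to \partial C$. A substitution argument gives the scaling laws $\phi_C(tx) = t^{-(n+1)} \phi_C(x)$ for $t > 0$ and $\phi_C(gx) = |\det g|^{-1}\phi_C(x)$ for any $g \in \GLnp$ preserving $C$. Define the Vinberg map
\[
V \colon C \longrightarrow C^{\ast}, \qquad V(x) := -d(\log \phi_C)_x.
\]
Since $d\phi_C(x)(\eta) < 0$ for all $\eta \in C$, one has $V(x) \in C^{\ast}$ for each $x \in C$, and the scaling laws for $\phi_C$ yield
\[
V(tx) = t^{-1} V(x), \qquad V(gx) = g^{-T} V(x).
\]
Vinberg's theorem on properly convex cones asserts that $V$ is a real-analytic diffeomorphism of $C$ onto $C^{\ast}$.

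Because $V$ is homogeneous of degree $-1$, it descends to a smooth bijection $\bar V \colon \Omega \to \Omega^{\ast}$ with smooth inverse, hence a diffeomorphism. The equivariance $V(gx) = g^{-T} V(x)$ becomes $\bar V \circ \gamma = \gamma^{\ast} \circ \bar V$ under the group isomorphism $\gamma \mapsto \gamma^{\ast}$ between $\bGamma$ and $\bGamma^{\ast}$. Thus $\bar V$ carries $\bGamma$-orbits bijectively to $\bGamma^{\ast}$-orbits and identifies the isotropy group at $x$ with the isotropy group at $\bar V(x)$ via this isomorphism. Consequently $\bar V$ descends to an orbifold diffeomorphism $\orb = \Omega/\bGamma \to \Omega^{\ast}/\bGamma^{\ast} = \orb^{\ast}$; strong tameness of $\orb$ is used only to guarantee that $\orb^{\ast}$ also has orbifold ends matching up under $\bar V$.

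The main obstacle is the global surjectivity of $V$ onto $C^{\ast}$. Injectivity and the local diffeomorphism property follow from strict convexity of $-\log \phi_C$ (the Hessian is positive definite by Cauchy--Schwarz applied inside the integral). Properness of $V$, which converts openness into surjectivity, rests on the blow-up of $\phi_C$ along $\partial C$: if $V(x_n) \to \xi_0 \in \partial C^{\ast}$, one would force $x_n$ to accumulate on $\partial C$, contradicting $V(x_n) \in C^{\ast}$. The cleanest way to package this is the classical Legendre-transform identity $V_{C^{\ast}} \circ V_C = \mathrm{id}_C$, which simultaneously proves $V$ surjective and identifies its inverse. Once this is in hand the orbifold-level conclusion is formal.
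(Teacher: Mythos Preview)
The paper does not actually supply a proof of this theorem: it is stated with attribution to Vinberg, and the resulting map is simply named the ``Vinberg duality diffeomorphism,'' with the reader implicitly referred to Goldman's notes \cite{wmgnote} and Vinberg's work \cite{vin63} for the construction. Your proposal reconstructs exactly that classical argument --- the Koszul--Vinberg characteristic function, the logarithmic gradient map $V:C\to C^{\ast}$, its degree-$(-1)$ homogeneity and $\GL$-equivariance, and descent to the projective quotients --- so there is nothing to compare: your approach \emph{is} the intended one, correctly executed. One cosmetic remark: strong tameness plays no role in producing the diffeomorphism (the Vinberg map works for any properly convex $\Omega$), so your parenthetical about ends is harmless but unnecessary for the statement as written.
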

We call the map the {\em Vinberg duality diffeomorphism}.
For an orbifold $\orb$ with boundary, 
the map is a diffeomorphism in the interiors $\orb^{o} \ra \orb^{\ast o}$.
Also,  $\mathcal{D}_{\orb}$ gives us the diffeomorphism $\partial \orb \ra \partial \orb^{\ast}$. 
(I conjecture that they form a diffeomorphism $\orb \ra \orb^{\ast}$ up to isotopies.) 




\section{Ends} \label{sec-ends}

In this section, we begin by explaining 
the R-ends. We classify them into three cases: complete affine ends, properly convex ends, and
nonproperly convex and not complete affine ends. 
We also introduce T-ends.

Suppose that $\mathcal{O}$ is a strongly tame
properly convex real projective orbifolds and a universal cover $\tilde{\mathcal{O}}$
with compact boundary $\partial \orb$ and some ends. 
(This will be the universal assumption for this paper.)

We recall the material in Section \ref{sub-ends}. 
An {\em  end neighborhood system} is  a sequence of open sets $U_1, U_2, \ldots$ in $\orb$ 
so that 
\begin{itemize} 
\item $U_i \supset U_{i+1}$ for each $i$ where 
\item each $U_i$ is a component of the complement of a compact subset in $\orb$ so that $\clo(U_i)$ is not compact, and 
\item given each compact set $K$ in $\orb$, $U_i \cap K \ne \emp$ for only finitely many $i$. 
\end{itemize} 
Two such sequences $\{U_1, U_2, \ldots \}$ 
and $\{U'_1, U'_2, \ldots \}$ are equivalent if 
\begin{itemize} 
\item for each $U_i$ we find $k$ so that $U'_j \subset U_i$ for $j > k$ 
and 
\item conversely for each $U'_i$ we find $k'$ such that $U_j \subset U'_i$ for $j > k'$. 
\end{itemize} 
An equivalence class of end neighborhoods is said to be an {\em end} of $\orb$.
A {\em neighborhood} of an end is one of the open set in the sequence in the equivalence class
of the end. 


A {\em end neighborhood system} of $\orb$ is the union of 
mutually disjoint collection of end neighborhoods for all ends 
where each end neighborhood is diffeomorphic to an orbifold times an interval. 


Given a component of such a system, the inverse image is a disjoint 
union of connected open sets. Let $\tilde U$ be a component. 
$\tilde U$ is  a \hyperlink{term-pendnh}{proper pseudo-end neighborhood}.
The subgroup $\bGamma_{\tilde U}$  acts on $\tilde U$  
so that $\tilde U/\bGamma_{\tilde U}$ is homeomorphic to the product end neighborhood. 
Note that any other component $\tilde U'$ is of form $\gamma(\tilde U)$ for $\gamma \in \bGamma - \bGamma_{\tilde U}$ 
and $\bGamma_{\tilde U'} = \gamma \bGamma_{\tilde U} \gamma^{-1}$ and $\bv_{\tilde U'} = \gamma(\bv_{\tilde U})$. 

Here, $\tilde U'$ may not be a neighborhood of an end of $\torb$ in the topological sense as in the cases of horospherical ends. 
$\bGamma_{\tilde U}$ is \hyperlink{term-pendfund}{the pseudo-end fundamental group}. 
Let $\tilde E$ denote the pseudo-end corresponding to a pseudo-end sequence containing $\tilde U$. 
Up to the $\bGamma$-action, there are only finitely many pseudo-end vertices and pseudo-end fundamental groups. 
For an end $E$, $\bGamma_{\tilde U}$ is well-defined up to conjugation by $\bGamma$, 
and we denote it by $\bGamma_{\tilde E}$. 
Its conjugacy class is more appropriately denoted $\bGamma_{\tilde E}$. 

A p-R-end may have more than one p-end vertex; however, the radial foliation structure will determine 
a p-R-end and vice versa.

We showed: 
\begin{lemma}\label{lem-inv}
Suppose that $\mathcal{O}$ is a strongly tame properly convex real projective orbifold 
with R- or T-ends and a universal cover $\tilde{\mathcal{O}}$.
Let $U'$ be  the inverse image of the union $U$ of mutually disjoint end neighborhoods. 
For a given component $\tilde U$ of $U'$, 
if $\gamma(\tilde U)\cap \tilde U \ne \emp$, then $\gamma(\tilde U)= \tilde U$ and 
$\gamma$ lies in the fundamental group $\bGamma_{E'}$ of the p-end $E'$ associated with $\tilde U$. 
\end{lemma}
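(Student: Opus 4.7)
The plan is to reduce the lemma to the elementary covering-space principle that deck transformations permute the connected components of the preimage of any open set, and then to invoke the definition of the pseudo-end fundamental group recalled in Section~\ref{sub-ends}. Since $p_\orb : \torb \to \orb$ is the quotient by the properly discontinuous $\bGamma$-action, the preimage $U' = p_\orb^{-1}(U)$ of any open subset of $\orb$ is open, and each $\gamma \in \bGamma$ acts as a homeomorphism of $\torb$ commuting with $p_\orb$, so $\gamma(U') = U'$.

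First, since $\gamma$ is a homeomorphism, it sends connected components of $U'$ to connected components of $U'$. Hence $\gamma(\tilde U)$ is itself a connected component of $U'$. Second, two distinct connected components of an open set are necessarily disjoint, so the assumption $\gamma(\tilde U)\cap \tilde U \neq \emptyset$ forces $\gamma(\tilde U) = \tilde U$. Third, by the definition recalled in Section~\ref{sub-ends}, the pseudo-end fundamental group $\bGamma_{\tilde E'}$ of the p-end $E'$ determined by $\tilde U$ is precisely the stabilizer $\{\eta \in \bGamma : \eta(\tilde U) = \tilde U\}$; thus $\gamma \in \bGamma_{\tilde E'}$.

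The only point worth mentioning is the orbifold-specific subtlety: because $U$ is chosen as a mutually disjoint union of product end neighborhoods $\Sigma_{E_i}\times (0,1)$, each component $\tilde U$ of $U'$ is a genuine lift (as a pseudo-end neighborhood in the sense of the paper), so the stabilizer language above is literally the pseudo-end fundamental group and not merely an abstract covering-theoretic subgroup. I expect no serious obstacle in the argument; the content of the lemma is essentially bookkeeping matching the definitions, and the only care needed is in citing the fact that orbifold universal covers behave like ordinary covers with respect to preimages of open sets, which is immediate from the quotient description $\orb = \torb/\bGamma$.
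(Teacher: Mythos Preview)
Your proposal is correct and matches the paper's approach: the paper presents this lemma with the words ``We showed:'' immediately after the definitional discussion of pseudo-end neighborhoods and their stabilizers (Section~\ref{sub-ends}), treating it as an immediate consequence of those definitions rather than giving a separate formal proof. Your argument simply makes explicit the covering-space bookkeeping that the paper leaves implicit.
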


We will assume that our real projective orbifold
$\orb$ is a strongly {tame convex orbifold with boundary and some  ends.}
We now define radial ends and totally geodesic ends. 

\begin{description} 
\item[Radial ends] 
A \hypertarget{term-rend}{radial end} is an end with an end neighborhood $U$ where each 
component $\tilde U$ of the inverse image $p_\orb^{-1}(U)$
has a foliation by properly embedded projective geodesics ending at a common point $\bv_{\tilde U} \in \bR P^n$. 
\begin{itemize} 
\item The {\em space of directions} of oriented projective geodesics through $\bv_{\tilde E}$ forms 
an $(n-1)$-dimensional real projective sphere. 
We denote it by \hypertarget{term-lsphere}{$\SI^{n-1}_{\bv_{\tilde E}}$} and call it a {\em linking sphere}. 
\item Let $\tilde \Sigma_{\tilde E}$ denote the space of equivalence classes of lines from $\bv_{\tilde E}$ in $\tilde U$
where two lines are regarded equivalent if they are identical near $\bv_{\tilde E}$. 
$\tilde \Sigma_{\tilde E}$ projects to a convex open domain in an affine space in  $\SI^{n-1}_{\bv_E}$
by the convexity of $\torb$. Then by Proposition \ref{prop-projconv} $\tilde \Sigma_{\tilde E}$ is projectively diffeomorphic to
\begin{itemize}
\item either {a complete affine space} $A^{n-1}$, 
\item a properly convex domain, 
\item or a convex but not properly convex 
and not complete affine domain in $A^{n-1}$. 
\end{itemize} 
\item The subgroup $\bGamma_{\tilde E}$, a p-end fundamental group, of $\bGamma$ 
fixes $\bv_{\tilde E}$ and  acts 
as a projective automorphism group on \hyperlink{term-lsphere}{$\SI^{n-1}_{\bv_E}$}. 
Thus, the quotient $\tilde \Sigma_{\tilde E}/\bGamma_{\tilde E}$ admits a real projective 
structure of one dimension lower. 
\item We denote by $\Sigma_{\tilde E} $ the real projective $(n-1)$-orbifold $\tilde \Sigma_{E}/\bGamma_{E}$. 
Since we can find a transversal orbifold $\Sigma_{\tilde E}$ to the radial foliation in 
a p-end neighborhood for each p-R-end $\tilde E$ of $\mathcal{O}$,
it lifts to a transversal surface $\tilde \Sigma_{\tilde E}$ in $\tilde U$. 
\item We say that a p-R-end $\tilde E$ is  {\em convex} (resp. {\em properly convex}, and {\em complete affine}) 
if $\tilde \Sigma_{\tilde E}$ is convex  (resp. properly convex, and complete affine). 
\end{itemize}

Thus, an R-end is either
\begin{itemize}
\item[(i)] complete affine (CA), 
\item[(ii)] properly convex (PC), or 
\item[(iii)] convex but not properly convex and not complete affine (NPNC). 
\end{itemize}

\item[Totally geodesic ends] 
An {\em end} $E$ is {\em totally geodesic} if a closed end neighborhood $U$ in $\orb$ compactifies
 by adding a totally geodesic suborbifold $\Sigma_E$
 in an ambient orbifold, homeomorphic to $\Sigma_E \times I$ for a closed interval $I$.
The choice of the compactification is said to be the {\em totally geodesic end structure}. 
Two compactifications are equivalent if some respective neighborhoods are projectively diffeomorphic. 
(One can see in \cite{cdcr1} two inequivalent ways to compactify for real projective elementary annulus.)
If $\Sigma_E$ is properly convex, then the end is said to be {\em properly convex}. 
\end{description}
We call such an end simply a {\em T-end}. 
Note that the end orbifolds are determined for R- or T-ends up to diffeomorphisms. 





Let us end with a few facts. 

\begin{lemma}\label{lem-endcover} 
Let $U$ be {a p-R-end neighborhood with smooth $\Bd U \cap \torb$ of}  a p-end vertex $\bv_{\tilde E}$. 
Suppose that $\Bd U \cap \torb \cap l$ is a unique point for 
each open great segment $l$ from endpoints $\bv_{\tilde E}$ {in a direction of} $\tilde \Sigma_{\tilde E}$. 
Then $\Bd U \cap \torb$ covers a compact orbifold
and $p_{\orb}(U)$ is diffeomorphic to $\Sigma_{\tilde E} \times \bR$. 
\end{lemma}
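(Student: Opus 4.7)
The plan is to use the radial structure from $\bv_{\tilde E}$ together with a projectively invariant parameter along each leaf to build (a) a $\bGamma_{\tilde E}$-equivariant homeomorphism $\Bd U\cap\torb \to \tilde\Sigma_{\tilde E}$ and (b) a $\bGamma_{\tilde E}$-equivariant diffeomorphism $U \cong \tilde\Sigma_{\tilde E}\times\bR$. Quotienting by $\bGamma_{\tilde E}$ then yields both conclusions.

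For (a), let $\pi$ denote the radial projection from $\bv_{\tilde E}$, so $\pi(x)$ is the direction in $\tilde\Sigma_{\tilde E}$ of the unique great segment from $\bv_{\tilde E}$ through $x$. The hypothesis says $\pi$ restricts to a bijection $\Bd U\cap\torb \to \tilde\Sigma_{\tilde E}$, and continuity of $\pi$ is immediate; the delicate point is continuity of $\pi^{-1}$. For each $l\in \tilde\Sigma_{\tilde E}$ I would choose bracketing points $y_1\in l\cap U$ and $y_2\in l\cap (\torb\setminus\clo(U))$ close to $x_l := \pi^{-1}(l)$. Both $U$ and $\torb\setminus\clo(U)$ are open in $\torb$, so every nearby radial line $l'$ still meets both of these neighborhoods, and the uniqueness hypothesis then pins $x_{l'}$ on the arc of $l'$ strictly between them, inside $\torb$ and close to $x_l$. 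Equivariance is automatic since every $g\in\bGamma_{\tilde E}$ fixes $\bv_{\tilde E}$, so $(\Bd U\cap\torb)/\bGamma_{\tilde E}$ is homeomorphic to the compact end-orbifold $\Sigma_{\tilde E}$, giving the covering statement.

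For (b), by Lemma~\ref{lem-inv} we have $p_\orb(U)=U/\bGamma_{\tilde E}$, so it suffices to produce a $\bGamma_{\tilde E}$-equivariant diffeomorphism $\Psi:U\to \tilde\Sigma_{\tilde E}\times \bR$. I would construct it leafwise: for each $l$, write $l\cap\torb=(\bv_{\tilde E},l^-)$ as the open segment between the two points at which $l$ meets $\Bd\torb$, and let $\tau(x)$ be the Hilbert distance in $\torb$ from $x_l$ to $x$ along $l$. This is a smooth bijection $U\cap l\to (0,\infty)$, and being computed from the cross-ratio $[\bv_{\tilde E},l^-;x_l,x]$ it is preserved by every $g\in\bGamma_{\tilde E}$, which sends the reference quadruple to $(\bv_{\tilde E},(g(l))^-,x_{g(l)},g(x))$. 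Setting $\Psi(x):=(\pi(x),\tau(x))$ then yields the required equivariant diffeomorphism, and the quotient is $\Sigma_{\tilde E}\times(0,\infty)\cong \Sigma_{\tilde E}\times\bR$.

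The principal obstacle is the continuity of $\pi^{-1}$: a priori, as $l'\to l$, the unique point $x_{l'}$ could in principle drift off into $\Bd\torb$ rather than converge to $x_l$. The bracketing argument rules this out because $y_1, y_2\in\torb$ and the openness of $U$ and of $\torb\setminus\clo(U)$ confine $x_{l'}$ to an arc inside $\torb$ that shrinks to $x_l$. Once this step is in place, continuity of $l\mapsto l^-$ on $\tilde\Sigma_{\tilde E}$ and the projective invariance of $\tau$ are routine, and everything assembles into the stated diffeomorphism.
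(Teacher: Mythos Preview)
The paper states this lemma without proof (it is a survey), so there is no author's argument to compare against; I can only assess your proposal on its own merits.

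Your overall strategy is sound and correctly handles the topological content. The bracketing argument for continuity of $l\mapsto x_l$ is the right idea and works: since the part of $l$ on the $\bv_{\tilde E}$-side of $x_l$ lies in $U$ and the uniqueness hypothesis forces the far side to lie in $\torb\setminus\clo(U)$, both bracketing points $y_1,y_2$ exist, and openness of $U$ and of $\torb\setminus\clo(U)$ traps $x_{l'}$ near $x_l$. This gives the $\bGamma_{\tilde E}$-equivariant homeomorphism $\Bd U\cap\torb\to\tilde\Sigma_{\tilde E}$ and hence the compact quotient. Your invocation of Lemma~\ref{lem-inv} to identify $p_{\orb}(U)$ with $U/\bGamma_{\tilde E}$ is also correct.

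There is one gap. You assert that $\Psi=(\pi,\tau)$ is a \emph{diffeomorphism}, but $\tau(x)=d_{\torb}(x_{\pi(x)},x)$ depends on $x_l$, and the lemma does not assume $\Bd U$ is smooth; your bracketing argument only yields continuity of $l\mapsto x_l$. So as written $\Psi$ is only an equivariant homeomorphism. To recover the diffeomorphism, you can instead base $\tau$ on the smooth $\bGamma_{\tilde E}$-equivariant transversal $\tilde\Sigma_{\tilde E}\hookrightarrow U$ that the paper provides just before Section~\ref{sec-exend} (set $w_l:=l\cap\tilde\Sigma_{\tilde E}$), which identifies the region between $\bv_{\tilde E}$ and this transversal smoothly with $\tilde\Sigma_{\tilde E}\times(0,\infty)$. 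For the remaining collar between $\tilde\Sigma_{\tilde E}$ and $\Bd U\cap\torb$ one still needs an argument; the cleanest route is to observe that the radial projection $p_{\orb}(U)\to\Sigma_{\tilde E}$ is a smooth orbifold submersion whose fibers are oriented open intervals (orientation toward $\bv_{\tilde E}$ is $\bGamma_{\tilde E}$-invariant), and that an oriented interval bundle admitting a smooth section is smoothly trivial. Alternatively one may simply shrink or smooth $\Bd U$ equivariantly before running your construction.
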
 

\begin{proposition}\label{prop-endvertex} 
Suppose that $\mathcal{O}$ is a strongly tame convex real projective orbifold.
Let $U'$ be an R-end neighborhood in $\orb$. 
Let $\tilde U$ be $p_{\orb}^{-1}(U')$ as above with $E'$ the p-end in $\torb$ associated with a component $U$ of $\tilde U$.
Then
\begin{itemize}
\item the closure of each component of  $\tilde U$ 
contains the p-end vertex $\bv_{E'}$ of the leaf of radial foliation in $\tilde U$, lifted from $U$.
\item There exists a unique one for each component $U_1$ of $\tilde U$ associated with a p-R-end $E'$ of $\torb$. 
\item The subgroup of $h(\pi_1(\mathcal{O}))$ acting on $U_1$ 
or fixing the p-R-end vertex $\bv_{E'}$ is precisely the subgroup $\bGamma_{E'}$.
\end{itemize} 
\end{proposition}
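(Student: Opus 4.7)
The plan is to address the three assertions in sequence, relying on the definition of a radial end, the convexity of $\torb$, and Lemma~\ref{lem-inv}.

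For the containment in the first bullet, each component $U_1$ of $\tilde U$ carries, by the R-end definition, a foliation by properly embedded projective geodesics with a common endpoint $\bv \in \rpn$. Since the leaves lie in $U_1$ and accumulate at $\bv$, we have $\bv \in \clo(U_1)$, and this $\bv$ is precisely the p-end vertex associated with the p-end determined by $U_1$. For the uniqueness in the second bullet, suppose two distinct points $\bv_1$ and $\bv_2$ were both common endpoints of the radial foliation on $U_1$. Each leaf is a properly embedded projective segment in the properly convex domain $\torb$ with its two endpoints in $\clo(\torb)$. If every leaf had both $\bv_1$ and $\bv_2$ as endpoints, every leaf would coincide with the unique projective segment from $\bv_1$ to $\bv_2$ inside $\clo(\torb)$, contradicting that the leaves foliate an $n$-dimensional open set.

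The third bullet splits into two claims: that the stabilizer of $U_1$ in $h(\pi_1(\orb))$ equals $\bGamma_{E'}$, and that the stabilizer of $\bv_{E'}$ in $h(\pi_1(\orb))$ equals $\bGamma_{E'}$. The first is immediate from Lemma~\ref{lem-inv}: any $\gamma$ preserving $U_1$ satisfies $\gamma(U_1) \cap U_1 \ne \emp$, forcing $\gamma \in \bGamma_{E'}$, while the reverse inclusion is built into the definition of $\bGamma_{E'}$. For the second claim, the inclusion $\bGamma_{E'} \subset \mathrm{Stab}(\bv_{E'})$ is clear since $\bGamma_{E'}$ preserves $U_1$ together with its radial foliation and therefore fixes the common endpoint. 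For the reverse, given $\gamma$ fixing $\bv_{E'}$, the image $\gamma(U_1)$ is a component of $\tilde U$ whose associated p-end vertex equals $\gamma(\bv_{E'}) = \bv_{E'}$; I would then conclude $\gamma(U_1) = U_1$ and apply Lemma~\ref{lem-inv} once more to get $\gamma \in \bGamma_{E'}$.

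The main obstacle is exactly this last step: showing that distinct components of $\tilde U$ carry distinct p-end vertices. The strategy is to exploit strong tameness and the freedom to shrink the pseudo-end neighborhood $U'$. For a sufficiently small pseudo-end neighborhood, the set of rays from $\bv_{E'}$ into $\torb$ corresponding to $\tilde \Sigma_{\tilde E}$ is connected, so two components of $\tilde U$ sharing the vertex $\bv_{E'}$ could be joined by a path through an arbitrarily small enlargement of the end neighborhood — violating the property in Lemma~\ref{lem-inv} that $\gamma(U_1) \cap U_1 \ne \emp$ forces $\gamma(U_1) = U_1$. Making this rigorous will be the delicate part, but once the distinctness of vertices across components is secured, all three bullets follow as sketched.
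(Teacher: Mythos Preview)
The paper states this proposition without proof (it is a survey), so there is no argument to compare against. Evaluating your proposal on its own merits: your treatment of the first two bullets and of the equality $\mathrm{Stab}(U_1)=\bGamma_{E'}$ via Lemma~\ref{lem-inv} is clean and correct.

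The genuine gap is exactly the one you flag: the reverse inclusion $\mathrm{Stab}(\bv_{E'})\subset\bGamma_{E'}$ hinges on showing that no two distinct components of $\tilde U$ share the same p-end vertex, and your sketch for this (``joined by a path through an arbitrarily small enlargement'') is not an argument. Two components $U_1$ and $\gamma(U_1)$ with the same vertex $\bv_{E'}$ need not intersect: along any ray from $\bv_{E'}$ they may occupy disjoint sub-intervals at different depths, and shrinking $U'$ only makes this worse. What you actually need is that the space of radial directions $R_{\bv_{E'}}(\torb)=\tilde\Sigma_{\tilde E}$ is connected (it is convex) and that \emph{every} proper pseudo-end neighborhood with vertex $\bv_{E'}$ projects \emph{onto} $\tilde\Sigma_{\tilde E}$; then $U_1$ and $\gamma(U_1)$ determine the same pseudo-end (same compatibility class of exiting sequences), forcing $\gamma\in\bGamma_{E'}$ by definition. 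Alternatively, note that the paper itself remarks just before Lemma~\ref{lem-inv} that ``a p-R-end may have more than one p-end vertex,'' which suggests the intended reading of the third bullet may simply be that $\bGamma_{E'}$ fixes $\bv_{E'}$ and equals the stabilizer of $U_1$, not that it equals the full point-stabilizer of $\bv_{E'}$ in $h(\pi_1(\orb))$. You should either adopt that weaker reading or supply the missing surjectivity argument above.
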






\subsection{Horospherical domains, lens domains, lens-cones, and so on.} 




A {\em segment} is a convex arc in a $1$-dimensional subspace of $\bR P^{n}$ or $\SI^{n}$. 
We will denote it by $\ovl{xy}$ if $x $ and $y$ are endpoints. It is uniquely determined by $x$ and $y$ if $x$ and $y$ are not antipodal. 
In the following, all the sets are required to be inside an affine subspace $A^n$ and its closure either in $\bR P^n$ or $\SI^n$. 
\begin{itemize}
\item An $n$-dimensional submanifold $L$ of $A^n$ is said to be a {\em horoball} if it is strictly convex, 
and the boundary $\partial L$ is diffeomorphic to $\bR^{n-1}$ and $\Bd L - \partial L$ is a single point. 
The boundary $\partial L$ is said to be a {\em horosphere}. 
\item An $n$-dimensional subdomain $L$ of $A^{n}$ is a {\em lens} if $L$ is a convex domain 
and $\partial L$ is a disjoint union of two smoothly strictly convex embedded open
$(n-1)$-cells $\partial_+ L$ and $\partial_{-} L$. 
\item A {\em cone} is a bounded domain $D$ in an affine patch with a point in the boundary, called an {\em end vertex} $v$
so that every other point $x \in D$ has an open segment $\ovl{vx}^{o} \subset D$.
\item The cone $\{p\} \ast L$ over a lens-shaped domain $L$ in $A^n$, $p\not\in \clo(L)$ 
is a {\em lens-cone} if it is a convex domain and satisfies
\begin{itemize}
\item $\{p\}\ast L = \{p\} \ast \partial_+ L$ for one boundary component $\partial_+ L$ of $L$ and 
\item every segment from $p$ to $\partial_{+} L$ meets the other boundary component
$\partial_{-} L$ of $L$ at a unique point.
\end{itemize} 
\item As a consequence each line segment from $p$ to $\partial_{+} L$ is transversal to $\partial_{+} L$.
$L$ is called the {\em lens} of the lens-cone. 
(Here different lenses may give the identical lens-cone.)
Also, $\{p\}\ast L - \{p\}$ is a manifold with boundary $\partial_{+} L$.  
\item Each of  two boundary components of $L$ is called a {\em top} or {\em bottom} {hypersurface}
depending on whether it is further away from $p$ or not. The top component is denoted by $\partial_+ L$
and the bottom one by $\partial_{-}L$. 
\item An $n$-dimensional subdomain $L$ of $A^{n}$ {is a {\em generalized lens}} if $L$ is a convex domain 
and $\partial L$ is a disjoint union of a smoothly strictly convex embedded open
$(n-1)$-cell $\partial_- L$ and an imbedded open $(n-1)$-cell $\partial_{+} L$, which is not necessarily smooth. 
\item 
A cone  $\{p\} \ast L$ is said to be a {\em generalized lens-cone}  if 
\begin{itemize} 
\item $\{p\} \ast L  = \{p\} \ast \partial_+ L, p \not\in \clo(L)$ and 
\item every segment from $p$ to $\partial_{+} L$ meets $\partial_{-} L$ at a unique point.
\end{itemize} 
A lens-cone will of course be considered a generalized lens-cone. 
\item  We again define the top hypersurface  and the bottom one as above.
They are denoted by $\partial_{+} L$ and $\partial_{-} L$ respectively. 
$\partial_+ L$ can be nonsmooth. $\partial_-L$ is required to be smooth. 
\item A {\em totally-geodesic submanifold} is a convex domain in a hyperspace. 
A {\em cone-over} a totally-geodesic submanifold $D$ is a union of all segments 
with  one endpoint a point $x$ not in the hyperspace
and the other endpoint in $D$. We denote it by $\{x\} \ast D$. 
\end{itemize}


We apply these to ends: 

\begin{description}
\item [Horospherical R-end] A p-R-end of $\torb$ is {\em horospherical} if it 
has a horoball in $\torb$ as a p-end neighborhood, or {equivalently a convex} open p-end neighborhood $U$ in $\torb$ 
so that $\Bd U \cap \torb = \Bd U - \{v\}$ for a boundary fixed point $v$. 
\item[Lens-shaped R-end]  A p-R-end $\tilde E$ is {\em lens-shaped} (resp. {\em totally geodesic cone-shaped},
{\em generalized-lens-shaped}\,),  if it has a p-end neighborhood that is a lens-cone (resp. a cone over a totally-geodesic domain, 
 a generalized lens-cone p-end neighborhood). Here we require that 
the p-end fundamental group $\bGamma_{\tilde E}$ acts on a lens of the lens-cone if 
$\tilde E$ is lens-shaped and acts on a generalized lens of the generalized lens-cone
if $\tilde E$ is generalized-lens-shaped. 
\end{description}

An end of $\orb$ is of {\em lens-shaped}  (resp. {\em totally geodesic cone-shaped},
{\em generalized-lens-shaped}\,) if 
the corresponding p-end is lens-shaped  (resp. {\em totally geodesic cone-shaped},
{\em generalized-lens-shaped}\,).  

Let the p-R-end $\tilde E$ have a p-end neighborhood of form $\{p\} \ast L -\{p\}$ that is 
a generalized lens-cone $p \ast L$ over a generalized lens $L$
where $\partial (p\ast L -\{p\}) = \partial_+L$ and $\bGamma_{\tilde E}$ acts on $L$. 
Notice that $\{p\}\ast L -\{p\} - \partial_{-} L$ has two components since $\partial_{-} L$ meets each segment from $p$ 
to $\partial_{+} L$ at a unique point. 
A {\em concave p-end neighborhood} of $\tilde E$ is the open p-end neighborhood 
$\{p\}\ast L -\{p\} - L$ in $\torb$ contained in 
a p-R-end neighborhood in $\tilde{\mathcal{O}}$.
As it is defined, such a p-R-end neighborhood always exists for a generalized-lens-shaped p-R-end. 



Additionally, we define: 
\begin{itemize} 
\item A {\em quasi-lens cone} is a properly convex cone of form $p\ast S$ for a strictly convex open smooth 
hypersurface $S$ 
so that
\begin{itemize}
\item $\partial (\{p\} \ast S -\{p\})= S, p \in \clo(S) - S$ hold,
\item the space of directions from $p$ to $S$ is a properly convex domain in $\SI^{n-1}_p$, and 
\item each segment from $p$ in these directions meets $S$ transversally.
\end{itemize} 
\item  An R-end $\tilde E$ is {\em quasi-lens-shaped} if it has a p-end neighborhood that is 
a quasi-lens cone. 
\end{itemize} 

Finally, we define: 
\begin{description} 
\item[Lens-shaped T-end] A p-T-end $\tilde E$ of $\torb$ is {\em lens-shaped} if it has a lens neighborhood $L$ in 
an ambient orbifold of $\torb$ where 
\[\clo(\partial L)-\partial L= \Bd \partial L \subset P\] for the hyperspace $P$ containing 
$\tilde \Sigma_{\tilde E}$ and $L /\bGamma_{\tilde E}$ is is a compact orbifold.
\end{description}

\begin{remark} 
The reason we introduce a generalized-lens or horospherical condition is that we need this conditions to hold when we deform
the orbifolds keeping it properly convex. (See \cite{conv} and \cite{book}.) 
However, a more general condition can be used sometimes. 
\end{remark}

Horospherical ends will be shown to be cusp ends later.  Strictly speaking, we distinguish
between these two types.  See Section \ref{sub-horo}.


\subsection{Examples of ends} \label{sec-exend}

We will present some examples here, which we will fully justify later.

An \hypertarget{term-ellipsoid}{{\em ellipsoid}} is a subset in an affine space defined as a zero locus of a positive definite quadratic polynomial in term of 
the affine coordinates. 
Recall the Klein model of hyperbolic geometry: It is a pair $(B, \Aut(B))$ where 
$B$ is the interior of an ellipsoid in $\bR P^n$ or $\SI^{n}$ and 
$\Aut(B)$ is the group of projective automorphisms of $B$. 
$B$ has a Hilbert metric which in this case is the hyperbolic metric times a constant. 
Then $\Aut(B)$ is the group of isometries of $B$. (See Section \ref{sec-endth}.)

From hyperbolic manifolds, we obtain some examples of ends. 
Let $M$ be a complete hyperbolic manifold with cusps. 
$M$ is a quotient space of the interior $B$ of an ellipsoid in $\bR P^n$ or $\SI^n$
under the action of a discrete subgroup $\bGamma$ of $\Aut(B)$. 
Then horoballs are p-end neighborhoods of the horospherical R-ends. 

Suppose that $M$ has totally geodesic embedded surfaces $S_1,.., S_m$ homotopic to the ends.  
\begin{itemize}
\item We remove the outside of $S_j$s to obtain a properly convex 
real projective orbifold $M'$ with totally geodesic boundary.
\item Each $S_i$ corresponds to a disjoint union of totally geodesic domains $\bigcup_{j \in J} \tilde S_{i, j}$
for a collection $J$. For each $\tilde S_{i, j} \subset B$, a group 
$\Gamma_{i,j}$ acts on it where $\tilde S_{i, j}/\Gamma_{i, j}$ is 
a closed orbifold projectively diffeomorphic to $S_i$. 
\item Then $\Gamma_{i, j}$ fixes a point $p_{i,j}$ outside the ellipsoid by taking the dual 
point of $\tilde S_{i, j}$ outside the ellipsoid. 
\item Hence, we form the cone 
$M_{i, j} := \{p_{i, j}\} \ast \tilde S_{i, j}$. 
\item We obtain the quotient 
$M_{i, j}/\Gamma_{i, j} -\{p_{i, j}\}$ and identify $\tilde S_{i, j}/\Gamma_{i, j}$ to $S_{i,j}$ in 
$M'$ to obtain the examples of real projective manifolds with R-ends. 
\item $(p_{i, j}\ast \tilde S_{i, j})^{o}$ is a p-R-end neighborhood and the end is a totally geodesic R-end. 
\end{itemize}
This orbifold is called the {\em hyper-ideal extension} of the hyperbolic manifolds as real projective manifolds.
By Proposition \ref{prop-lensend}, these will be lens-shaped. 



Following proposition shows that many lens-shaped R-ends exist. 

\begin{proposition}\label{prop-lensend}
Suppose that $M$ is a strongly tame convex real projective orbifold.
Let $\tilde E$ be a p-R-end of $M$ with admissible $\pi_{1}(\tilde E)$. 
Suppose that 
\begin{itemize}
\item the holonomy group of the p-end fundamental group $\pi_{1}(\tilde E)$ 
\begin{itemize} 
\item is generated by the homotopy classes of finite orders or
\item  is generated by elements whose holonomy transformation fixes the p-end vertex with eigenvalues $1$ 
\end{itemize} 
and 
\item $\tilde E$ has a $\pi_{1}(\tilde E)$-invariant $n-1$-dimensional totally geodesic 
properly convex domain $D$ in a p-end neighborhood and not containing the p-end vertex in the closure of $D$.
\end{itemize} 
Then the p-R-end $\tilde E$ is lens-shaped.
\end{proposition}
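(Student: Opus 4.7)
The plan is to construct a $\pi_1(\tilde E)$-invariant lens $L$ thickening the given totally geodesic domain $D$ on both sides of the hyperplane it spans, and then verify that $\{v_{\tilde E}\}\ast L$ serves as a lens-cone p-end neighborhood of $\tilde E$ with $\pi_1(\tilde E)$ acting on $L$. Let $v := v_{\tilde E}$, let $H \subset \bR P^n$ be the hyperplane spanned by $D$, and observe that by hypothesis $v \notin \clo(D) \subset H$. The cone $C := \{v\}\ast D$ is then a properly convex open subset of an affine chart, cut into two pieces by $D$, and the radial foliation of $C$ through $v$ is transverse to $H$.

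First I would carry out an eigenvalue analysis at $v$. Under either generator hypothesis, each generator $g$ of $\pi_1(\tilde E)$ has $h(g)$ of spectral radius exactly $1$: in the finite-order case the full spectrum consists of roots of unity, and in the ``fixes $v$ with eigenvalues $1$'' case $h(g)$ is unipotent. Since the group preserves both $H$ (because it preserves $D$) and the line $\langle v\rangle$ (because it fixes $v$), each generator restricts to either a finite-order or a unipotent transformation on both factors. After passing to a finite-index subgroup if needed, this forces trivial radial scaling at $v$, so the induced actions on $D$ and along $\langle v\rangle$ are essentially decoupled and no expanding or contracting invariant direction appears.

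Second I would build the invariant lens. Admissibility of $\pi_1(\tilde E)$ together with the proper convexity of $D$ and Proposition \ref{prop-Ben2} applied inside $H$ produces a compact fundamental domain $F \subset D$ for the action on $D$. Over $F$ I would choose a small smooth strictly convex ``bump'' hypersurface piece into each side of $H$ inside $\torb$, translate it by $\pi_1(\tilde E)$, and take the convex hull of the orbit inside $\torb$. The trivial-radial-scaling property from Step 1 ensures the orbit neither collapses onto $D$ (which would degenerate the boundary into a horosphere tangent to $H$ rather than a genuine lens boundary) nor escapes to $\Bd\torb$ in unwanted directions, so the resulting hull has smooth strictly convex $(n-1)$-cell boundary on each side. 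These are the required hypersurfaces $\partial_-L$ (toward $v$) and $\partial_+L$ (away from $v$), bounding a lens $L$ around $D$. In the finite-order case this averaging step is a direct finite-group calculation over each orbit; in the unipotent case one exploits the polynomial orbit-growth of unipotent transformations, together with a Dirichlet-type fundamental-domain argument, to maintain strict convexity uniformly.

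Third I would verify that $\{v\}\ast L$ is a lens-cone p-end neighborhood. Transversality of the radial foliation to $H$ persists for the thin thickening $L$ of $D$, so each segment from $v$ to $\partial_+L$ meets $\partial_-L$ in a single point, which is the lens-cone transversality property. Proper convexity of $\{v\}\ast L$ follows from its containment in $\torb$ together with the proper convexity of $L$, and $\bGamma_{\tilde E}$ acts on $L$ by construction. Hence $\tilde E$ is lens-shaped. The main obstacle is Step 2, especially in the unipotent case: the eigenvalue-$1$ hypothesis has to rule out the degeneration of the convex-hull boundary into a horosphere tangent to $H$ at an ideal point, and one must upgrade ``convex hull of a group orbit of a bump'' to a $C^1$-smooth strictly convex invariant hypersurface. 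Quantifying this nondegeneracy is the technical heart of the argument; the finite-order case is considerably easier because finite groups permit direct averaging.
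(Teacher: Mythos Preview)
Your eigenvalue analysis misreads the second hypothesis. ``Fixes the p-end vertex with eigenvalue $1$'' means $\lambda_{v_{\tilde E}}(g)=1$, not that $h(g)$ is unipotent. In the motivating hyper-ideal examples the generators act on $D$ as genuine hyperbolic isometries with eigenvalues $\lambda,\lambda^{-1},1,\dots,1$; the only constraint is that the eigenvalue \emph{at} $v$ is $1$. Consequently your ``polynomial orbit growth of unipotent transformations'' reasoning in Step~2 does not apply, and in fact orbits of a bump under such a group will be stretched exponentially along $D$.

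This matters because Step~2 is where your argument has a real gap. The convex hull of the $\pi_1(\tilde E)$-orbit of a smooth strictly convex bump is certainly convex, but there is no reason its boundary should be smooth or strictly convex: convex hulls typically introduce flat faces and ridges, and the smoothing step you allude to is nontrivial to carry out equivariantly. You flag this yourself as the ``main obstacle,'' but you do not resolve it.

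The paper bypasses all of this with a classical tool you are missing. Pass to the affine chart with the hyperplane $H\supset D$ at infinity; then $U=(\{v\}\ast D)^o$ becomes an open properly convex cone in $\bR^n$ with apex $v$, and the condition $\lambda_{v_{\tilde E}}(g)=1$ (for every $g$, which follows from either generator hypothesis since finite-order elements also have $\lambda_{v_{\tilde E}}=1$) is exactly what makes $\Gamma_{\tilde E}$ act \emph{linearly with $|\det|=1$} on this cone. Now the Koszul--Vinberg characteristic function of the cone is automatically $\Gamma_{\tilde E}$-invariant, and its level sets are smooth strictly convex hypersurfaces asymptotic to $\partial U$ (see Lemmas~6.5--6.6 of Goldman's notes). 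Any such level set bounds a one-sided smooth strictly convex neighborhood $N$ of $D$ in $U$. For the other side, the paper simply reflects: take the reflection $R$ fixing $H$ pointwise and fixing $v$, and then a dilatation $D'$ toward $v$ fixing $H$ pointwise so that $D'R(N)\subset\torb$; the union $N\cup D'R(N)$ is the desired lens.

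So the correct route is: (1) reduce both generator hypotheses to ``$\lambda_{v_{\tilde E}}\equiv 1$''; (2) interpret this as a unimodular linear action on a proper cone; (3) invoke Koszul--Vinberg for one side; (4) reflect and dilate for the other side. Your Steps~1 and~3 are on the right track once the eigenvalue reading is corrected, but Step~2 should be replaced wholesale by the characteristic-function argument.
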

\begin{proof} 
Let $\tilde M$ be the universal cover of $M$ in $\SI^n$. It will be sufficient to prove for this case. 
$\tilde E$ is a p-R-end of $M$, and $\tilde E$ has a $\pi_{1}(\tilde E)$-invariant $n-1$-dimensional totally geodesic 
properly convex domain $D$. 
{Since $D$ projects to $\tilde \Sigma_{\tilde E}$, 
the domain $D$ is transverse to radial rays from $\bv_{\tilde E}$. }

Under the first assumption, 
since the end fundamental group $\Gamma_{\tilde E}$
is generated by elements of finite order, 
the eigenvalues of the generators  corresponding to the p-end vertex  $\bv_{\tilde E}$ equal $1$
and hence every element of the end fundamental group has $1$ as the eigenvalue at
$\bv_{\tilde E}$.

Now assume that the holonomy of the elements of the end fundamental groups
fixe the p-end vertices with eigenvalues equal to $1$. 

Then the p-end neighborhood $U$ can be chosen to be 
the open cone over the totally geodesic domain with vertex $\bv_{\tilde E}$. 
$U$ is projectively diffeomorphic  to the interior of a properly convex cone in 
an affine subspace $A^n$. 
The end fundamental group acts on 
$U$ as a discrete linear group of determinant $1$. 
The theory of convex cones applies, and using the level sets of 
the Koszul-Vinberg function, we obtain 
a smooth convex one-sided neighborhood $N$ in $U$ 
(see Lemmas 6.5 and 6.6 of Goldman \cite{wmgnote}).

We obtain an one-sided neighborhood in the other side as follows: 
We take $R(N)$ for by a reflection $R$ fixing each point of the hyperspace containing $\tilde \Sigma$ and the p-end vertex.
{Then we choose a diagonalizable transformation $\mathcal{D}$ fixing the p-end vertex and 
every point of $D$ so that the image $\mathcal{D}R(F)$ is in $\torb$
for a fundamental domain $F$ of $R(N)$. We obtain $\mathcal{D}R(N) \subset \torb$. 
Thus, $N \cup \mathcal{D}R(N)$ is the lens we needed.}
The cone $ \bv_{\tilde E}  \ast (N\cup DR(N))$ is the lens-cone neighborhood for $\tilde E$. 
\end{proof}

A more specific example is below. 
Let $S_{3,3,3}$ denote the $2$-orbifold with base space homeomorphic to a $2$-sphere and 
three cone-points of order $3$. 
The orbifolds satisfying the following properties are the example of Tillman as described in \cite{conv} and 
the hyperbolic Coxeter $3$-orbifolds based on 
an ideal $3$-polytopes of dihedral angles $\pi/3$. (See Choi-Hodgson-Lee \cite{CHL}.)

\begin{proposition} \label{prop-lensauto}
Let $\mathcal{O}$ be a  strongly tame convex real projective $3$-orbifold with R-ends where each end orbifold 
is homeomorphic  to a sphere $S_{3,3,3}$ or a disk with three silvered edges and three corner-reflectors of orders $3, 3, 3$. 
Assume that holonomy group of $\pi_{1}(\orb)$ is strongly irreducible. 
Then the orbifold has only lens-shaped R-ends or horospherical R-ends. 
\end{proposition}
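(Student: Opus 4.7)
Proof plan. The strategy is to apply the classification of p-R-ends from Proposition \ref{prop-projconv} to split into three cases according to whether $\tilde\Sigma_{\tilde E}$ is complete affine, properly convex, or NPCC, treating each case individually. I would begin by observing that in both end orbifold types, the end fundamental group $\pi_1(\tilde E)$ is a $2$-dimensional Euclidean crystallographic group generated by finite-order elements: three rotations of order $3$ for $S_{3,3,3}$, or three reflections across the silvered edges (whose pairwise products have order $3$) for the disk with $(3,3,3)$ corner reflectors. In particular $\pi_1(\tilde E)$ has $\bZ^2$ as a finite-index subgroup, and each generator has linear lift of finite order, hence eigenvalues that are roots of unity with modulus $1$, including eigenvalue $1$ at the p-end vertex $\bv_{\tilde E}$.

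In the complete affine case, I would invoke the horospherical end theory developed in Section \ref{sub-horo}: the virtually-$\bZ^2$ cocompact action on the complete affine link $\tilde\Sigma_{\tilde E}$ combined with the unit-modulus eigenvalues at $\bv_{\tilde E}$ forces all holonomies in $\pi_1(\tilde E)$ to have eigenvalues of absolute value one, and such an end is necessarily horospherical.

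In the properly convex case I apply Proposition \ref{prop-lensend}. The finite-order-generation hypothesis is immediate. For the invariant totally geodesic domain, Proposition \ref{prop-Ben2} applied to the virtually abelian cocompact action of $\pi_1(\tilde E)$ on the properly convex $2$-domain $\tilde\Sigma_{\tilde E}$ forces $\tilde\Sigma_{\tilde E}$ to be the interior of a $2$-simplex. Taking coordinates on $\bR^4$ with $\bv_{\tilde E} = [e_4]$ and the simplex having vertices $[e_1], [e_2], [e_3]$, the linear lifts of elements of $\pi_1(\tilde E)$ become monomial matrices, namely diagonal matrices composed with permutations of the first three coordinates, which preserve the hyperplane $\{x_4 = 0\}$. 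This hyperplane meets $\clo(\torb)$ in a $\pi_1(\tilde E)$-invariant totally geodesic properly convex $2$-domain (the open simplex $[e_1][e_2][e_3]$) whose closure does not contain $\bv_{\tilde E}$, verifying the second hypothesis of Proposition \ref{prop-lensend} and hence yielding the lens-shape.

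For the NPCC case, Proposition \ref{prop-projconv}(iii) provides a $\pi_1(\tilde E)$-invariant foliation of $\tilde\Sigma_{\tilde E}$ by complete affine $1$-subspaces with a common boundary subspace of lower dimension. However, the triangle group generated by three $2\pi/3$-rotations about distinct cone points (respectively by three reflections whose pairwise products have order $3$) admits no invariant $1$-dimensional direction field on the $2$-dimensional link, since an order-$3$ rotation fixes no unoriented line through its center. Here the strong irreducibility of $h(\pi_1(\orb))$ enters to rule out the globally reducible degenerate situation in which the foliation structure could be compatible with the whole group action. The main obstacle is the NPCC exclusion, which in principle requires invoking the detailed classification of NPCC ends from Section \ref{sec-notprop} (developed in \cite{End3}) to check that none of the quasi-joined subclasses admits the specific cyclic/dihedral structure at hand; the properly convex case is comparatively clean once the Benoist simplex picture is in place.
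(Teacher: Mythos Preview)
Your overall case division and the complete-affine and NPCC cases are essentially in line with the paper (the NPCC case is in fact simpler than you fear: in dimension $2$ the NPCC link is a half-plane foliated by parallel lines, and the order-$3$ rotation cannot preserve that foliation, so no appeal to the Section~\ref{sec-notprop} machinery is needed). The properly convex case, however, has a genuine gap.

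The claim that ``the linear lifts of elements of $\pi_1(\tilde E)$ become monomial matrices'' is not justified. Fixing $\bv_{\tilde E}=[e_4]$ only forces the matrix of $g\in\bGamma_{\tilde E}$ to have the block form
\[
g=\begin{pmatrix} A_g & 0\\ c_g^{\,T} & 1\end{pmatrix},
\]
with $A_g$ a (signed) monomial $3\times 3$ matrix coming from the action on the triangular link; the row $c_g$ is not constrained by the link data alone. In particular the hyperplane $\{x_4=0\}$ need not be $\bGamma_{\tilde E}$-invariant, and you have not produced a $2$-dimensional totally geodesic domain in $\torb$ to feed into Proposition~\ref{prop-lensend}. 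The paper handles this by working with the rank-two abelian subgroup $A\subset\bGamma_{\tilde E}$: one finds $g'\in A$ whose three link eigenvalues are pairwise distinct and $\ne 1$, so $g'$ is diagonalizable over $\bR$ with four distinct eigenvalues, yielding three fixed points in $\SI^3$ other than $\bv_{\tilde E}$ in the directions of the link vertices; commutativity of $A$ and the order-$3$ symmetry then force all of $\bGamma_{\tilde E}$ to preserve the triangle $T$ spanned by these three fixed points.

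There is a second step you omit entirely: even once an invariant triangle $T\subset\SI^3$ is found, one must show $T^{o}$ actually lies in $\torb$ (Proposition~\ref{prop-lensend} requires the invariant domain to sit in a p-end neighborhood). This is precisely where the strong irreducibility hypothesis enters in the paper's argument for the properly convex case: one first shows $\Bd T\subset\Bd\torb$ by a limit argument, then applies Lemma~\ref{lem-simplexbd} to get either $T\subset\Bd\torb$ or $T^{o}\subset\torb$; in the former case $\torb$ would equal the open tetrahedron $(\{\bv_{\tilde E}\}\ast T)^{o}$, forcing the holonomy to be virtually reducible, a contradiction. You invoked strong irreducibility only in the NPCC discussion, where it is not the essential ingredient.
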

\begin{proof}
Again, it is sufficient to prove this for the case $\torb \subset \SI^3$. 
Let $\tilde E$ be a p-R-end corresponding to an R-end whose end orbifold is diffeomorphic to $S_{3, 3, 3}$. 
It is sufficient to consider only $S_{3,3,3}$ since it double-covers the disk orbifold. 
Since $\bGamma_{\tilde E}$ is generated by finite order elements fixing a p-end vertex $\bv_{\tilde E}$,
every holonomy element has eigenvalue equal to $1$ at $\bv_{\tilde E}$.
Take a finite-index free abelian group $A$ of rank two in $\bGamma_{\tilde E}$. 
Since $\Sigma_E$ is convex, 
a convex projective torus $T^2$ covers $\Sigma_E$ finitely.
Therefore, $\tilde \Sigma_{\tilde E}$ is projectively diffeomorphic either to 
\begin{itemize}
\item a complete affine space or 
\item the interior of a properly convex triangle or
\item a half-space 
\end{itemize} 
by the classification of 
convex tori found in many places including \cite{wmgnote} and \cite{BenNil} and 
Proposition \ref{prop-projconv}. 
Since there exists a holonomy automorphism of order $3$ fixing a point of 
$\tilde \Sigma_{\tilde E}$, 
it cannot be a quotient of a half-space with a distinguished foliation by lines.
Thus, the end orbifold admits a complete affine structure or is a quotient of a properly convex triangle. 

 Suppose that $\Sigma_{\tilde E}$ has a complete affine structure.
{ Since $\lambda_{\bv_{\tilde E}}(g) = 1$} for all $g \in \bGamma_{\tilde E}$, 
 the only possibility from Theorem \ref{thm-comphoro} is when $\bGamma_{\tilde E}$ is virtually nilpotent and 
 and we have a cusp p-end for $\tilde E$. 
 
Suppose that $\Sigma_{\tilde E}$ has a properly convex  open triangle $T'$ as its universal cover. 
$A$ acts with an element $g'$ with the largest eigenvalue $>1$ and the smallest eigenvalue $<1$ as a transformation 
in $\SL_\pm(3, \bR)$ the group of projective automorphisms at $\SI^2_{\bv_{\tilde E}}$.
As an element of $\SL_{\pm}(4, \bR)$, we have $\lambda_{\bv_{\tilde E}}(g') = 1$ and the product 
of the remaining eigenvalues is $1$, the corresponding the largest and smallest eigenvalues are $> 1$ and $< 1$. 
Thus, an element of $\SL_{\pm}(4, \bR)$, $g'$ fixes $v_1$ and $v_2$ other than $\bv_{\tilde E}$
in directions of vertices of $T'$.
Since $\bGamma_{\tilde E}$ has an order three elements exchanging the vertices of $T'$, 
there are three fixed points of an element of $A$ different from $\bv_{\tilde E}, \bv_{\tilde E-}$. 
By commutativity, there is a properly convex compact triangle $T\subset \SI^{3}$  with these three fixed points 
where $A$ acts on. Hence, $A$ is diagonalizable over the reals.

We can make any vertex of $T$ to be an attracting fixed point of an element of $A$. 
Each element $g \in \Gamma_{\tilde E}$ conjugates elements of $A$ to $A$. 
Therefore $g$ sends the attracting fixed points of elements of $A$ to those of elements of $A$. 
Hence $g(T) = T$ for all $g \in \Gamma_{\tilde E}$. 

Each point of the edge $E$ of $\clo(T)$ is an accumulation point of an orbit of $A$
by taking a sequence $g_{i}$ so that the associated eigenvalues $\lambda_{1}(g_{i})$ 
and $\lambda_{2}(g_{i})$ are going to $+\infty$ while $\log|\lambda_{1}(g)/\lambda_{2}(g)|$is bounded. 
Since $\lambda_{\bv_{\tilde E}} = 1$, writing every vector 
as a linear combinations of vectors in the direction of the four vectors, 
this follows. 
Hence $\Bd T \subset \Bd \torb$ and $T \subset \clo(\orb)$. 

If $T^{o} \cap \Bd \orb \ne \emp$, then $T \subset \Bd \orb$ by Lemma \ref{lem-simplexbd}. 
Then each segment from $\bv_{\tilde E}$ ending in $\Bd \orb$ is in 
the direction of $\clo(\Sigma_{\tilde E})= T'$. It must end at $T$. 
Hence, $\torb = (T \ast \bv_{\tilde E})^{o}$, an open tetrahedron $\sigma$. 
Since the holonomy group acts on it, we can take a finite index group fixing each vertices of $\sigma$.
Thus, the holonomy group is virtually reducible.  This is a contradiction. 

Therefore, $T \subset \orb$ as $T \cap \Bd \orb = \emp$. 
We have a totally geodesic R-end and by Proposition \ref{prop-lensend}, the end is lens-shaped. 
(See also \cite{End2}.) 
\end{proof}


\begin{example}[Lee's example] \label{exmp-Lee}
Consider the Coxeter orbifold $\hat P$ with the underlying space on a polyhedron $P$ 
with the combinatorics of a cube with all sides mirrored and
all edges given order $3$ but vertices removed. 
By the Mostow-Prasad rigidity and the Andreev theorem, 
the orbifold has a unique complete hyperbolic structure. 
There exists a six-dimensional space of real projective structures on it 
as found in \cite{CHL} where one has a projectively fixed fundamental domain 
in the universal cover.

There are eight ideal vertices of $P$ corresponding to eight ends of $\hat P$. 
Each end orbifold is a $2$-orbifold based on a triangle with edges mirrored 
and vertex orders are all $3$. Thus, each end has a neighborhood homeomorphic 
to the $2$-orbifold multiplied by $(0, 1)$.
We can characterize them by a real-valued invariant.  
Their invariants are related when we are working on the restricted deformation space. 
(They might be independent  in the full deformation space as M. Davis and R. Green observed. )

This applies to S. Tillman's example. See our other paper on the mathematics archive \cite{conv} for details.

\end{example}


The following construction is called ``bending'' and was investigated by Johnson and Millson
\cite{JM}. 

\begin{example}[Bending] \label{exmp-bending} 
Let $\orb$ have the usual assumptions. 
We will concentrate on an end and not take into consideration of the rest of the orbifold. 
Certainly, the deformation given here may not extend to the rest.
(If the totally geodesic hypersurface exists on the orbifold, the bending does extend to the rest.
{S. Ballas and L. Marquis recently} found such examples for a link complement.)

Suppose that $\orb$ is {an oriented hyperbolic} manifold with a hyper-ideal end $E$. 
Then $E$ is a totally geodesic R-end with a p-R-end $\tilde E$.
Let  the associated orbifold 
$\Sigma_{E}$ for $E$ of $\orb$ be a closed $2$-orbifold and 
let $c$ be a  two-sided simple closed geodesic in $\Sigma_{E}$. 
Suppose that $E$ has an open end neighborhood $U$ in $\orb$ diffeomorphic to $\Sigma_{E} \times (0,1)$
with totally geodesic $\Bd U$ diffeomorphic to $\Sigma_E$.
Let $\tilde U$ be a p-end neighborhood in $\torb$ corresponding to $\tilde E$
bounded by $\tilde \Sigma_{\tilde E}$ covering $\Sigma_E$.
Then $U$ has a radial foliation whose leaves lifts to radial lines in $\tilde U$ from $\bv_{\tilde E}$. 

Let $A$ be an annulus in $U$ diffeomorphic to $c \times (0, 1)$, foliated by leaves of the radial foliation of $U$.
Now a lift $\tilde c$ of $c$ is in an embedded disk $A'$, covering $A$.
Let $g_c$ be the deck transformation corresponding to $\tilde c$ and $c$. 
Suppose that $g_{c}$ is orientation-preserving. 
Since $g_{c}$ is a hyperbolic isometry of the Klein model, 
the holonomy $g_c$ is conjugate 
to a diagonal matrix with entries $\lambda,  \lambda^{-1}, 1, 1, $ where $\lambda > 1$
and the last $1$ corresponds to the vertex $\bv_{\tilde E}$.  
We take an element $k_b$ of $\SLf$ of form in this system of coordinates
\begin{equation}\label{eqn-bendingm} 
\left(
\begin{array}{cccc}
1           &       0              & 0   & 0  \\
 0          &       1              & 0  & 0 \\ 
 0           &      0              & 1  & 0 \\ 
 0           &      0               & b & 1   
\end{array}
\right)
\end{equation}
where $b \in \bR$. 
$k_b$ commutes with $g_c$. 
Let us just work on the end $E$. 
We can ``bend'' $E$ by $k_b$: 

Now, 
$k_{b}$ induces a diffeomorphism $\hat k_{b}$ of an open neighborhood of $A$ in $U$ to another one 
of $A$ since $k_{b}$ commutes with $g_{c}$. 
We can find tubular neighborhoods $N_1$ of $A$ in $U$
and $N_{2}$ of $A$. 
We choose $N_{1}$ and $N_{2}$ so that they are
diffeomorphic by a projective map $\hat k_b$. 
Then we obtain two copies $A_1$ and $A_2$ of $A$ 
by completing $U - A$. 

Give orientations on $A$ and $U$. 
Let $N_{1,-}$ denote the left component of $N_{1} - A$ and 
let $N_{2, +}$ denote the right component of $N_{2} -A$. 


We take a disjoint union $(U - A) \coprod N_1 \coprod N_2$ and 
quotient it by identifying the copy of $N_{1,-}$ in $N_{1}$ with $N_{1, -}$ in $U- A $ by the identity map
and identify the copy of $N_{2,+}$ in $N_2$ with $N_{2,+}$ in $U - A$ by the identity also.
We glue back $N_1$ and $N_2$ by 
the real projective diffeomorphism $\hat k_b$ of a neighborhood of $N_1$ to that of $N_2$. 
Then $N_{1} - (N_{1,-} \cup A)$ is identified with $N_{2,+}$
and $N_{2} - (N_{2, +} \cup A)$ is identified with $N_{1, -}$. 
We obtain a new manifold. 

For sufficiently small $b$, we see that the end is still lens-shaped.
and it is not a totally geodesic R-end. (This follows since the condition of being 
a lens-shaped R-end is an open condition. 
See  \cite{End2}.)




For the same $c$, 
let $k_s$ be given by 
\begin{equation}\label{eqn-bendingm2} 
\left(
\begin{array}{cccc}
s           &       0              & 0   & 0  \\
 0          &       s              & 0  & 0 \\ 
 0           &      0              & s  & 0 \\ 
 0           &       0            & 0 & 1/s^3   
\end{array}
\right)
\end{equation}
where $s \in \bR_+$. 
These give us bendings of the second type. (We talked about this in \cite{conv}.) 
For $s$ sufficiently close to $1$, the property of being lens-shaped is preserved 
and being a totally geodesic R-end. 
(However, these will be understood by cohomology.)

If $s \lambda < 1$ for the maximal eigenvalue $\lambda$ of a closed curve $c_1$
meeting $c$ odd number of times, we have that the holonomy along $c_1$ has the attracting 
fixed point at $\bv_{\tilde E}$. This implies that we no longer have lens-shaped R-ends if we have 
started with a lens-shaped R-end. 

\end{example} 










\subsection{Characterization of complete R-ends} \label{sub-horo}


The results here  overlap with the results of Crampon-Marquis \cite{CM2} and 
Cooper-Long-Tillman \cite{CLT}. However, the results are of different direction than theirs 
and were originally conceived before their papers appeared.  We also make use of 
Crampon-Marquis \cite{CM2}. 

Let $\tilde E$ be a p-R-end.
A \hypertarget{term-mec}{{\em middle eigenvalue condition}} for a p-end fundamental group $\pi_{1}(\tilde E)$  holds if 
for each $g\in \pi_{1}(\tilde E) -\{\Idd\}$
the largest norm $\lambda_{1}(g)$ of eigenvalues of $g$ is strictly larger than 
the eigenvalue $\lambda_{\bv_{\tilde E}}(g)$ associated with p-end vertex $\bv_{\tilde E}$. 

Given an element $g \in \Gamma_{\tilde E}$, 
let $\left(\tilde \lambda_{1}(g), \dots, \tilde \lambda_{n+1}(g)\right)$ be the $(n+1)$-tuple of the eigenvalues
where we repeat each eigenvalue with the multiplicity given by the characteristic polynomial.
The {\em multiplicity} of a norm of an eigenvalues of $g$ is 
the number of times the norm occurs among the $(n+1)$-tuples of norms 
\[\left(|\tilde \lambda_{1}(g)|, \dots, |\tilde \lambda_{n+1}(g)|\right).\]

A  \hypertarget{term-wmec}{{\em weak middle eigenvalue condition}} for a p-end fundamental group $\pi_{1}(\tilde E)$  holds if 
for each $g\in \pi_{1}(\tilde E)$
if the eigenvalue $\lambda_{\bv_{\tilde E}}$ associated with the p-end vertex $\bv_{\tilde E}(g)$
has the largest norm of all eigenvalues of elements of $\pi_{1}(\tilde E)$, 
then the norm of the eigenvalue must have multiplicity $\geq 2$. 

Recall the parabolic subgroup of the isometry group $\Aut(B)$ of 
the hyperbolic space $B$ for an $(i_{0}+1)$-dimensional Klein model $B \subset \SI^{i_{0}+1}$
fixing a point $p$ in the boundary of $B$. 
Such a group is isomorphic to $\bR^{i_0}$ and is Zariski closed. 

Let $E$ be an $i_0$-dimensional \hyperlink{term-ellipsoid}{ellipsoid} 
containing the point $\bv$ in a subspace $P$ of dimension $i_{0}+1$ in 
$\SI^{n}$.  Let $\Aut(P)$ denote the group of projective automorphisms of $P$, 
and let $\SL_{\pm}(n+1, \bR)_{P}$ the subgroup of $\SL_{\pm}(n+1, \bR)$ acting on $P$. 
Let $r_{P}:\SL_\pm(n+1, \bR)_{P} \ra \Aut(P)$ denote the restriction homomorphism $g \ra g| P$. 
An \hypertarget{term-ppgroup}{{\em $i_{0}$-dimensional partial parabolic subgroup}} is 
one mapping  under $R_{P}$ isomorphically
to a parabolic subgroup of $\Aut(P)$ acting freely on $E -\{\bv\}$, fixing $\bv$.

Suppose now that $\torb \subset \bR P^{n}$. 
Let $P'$ denote a subspace of dimension $i_{0}+1$ containing an $i_{0}$-dimensional ellipsoid $E'$ 
containing $\bv$. 
Let $\PGL(n+1,\bR)_{P'}$ denote the subgroup of $\PGL(n+1, \bR)$ acting on $P'$. 
Let $R_{P'}: \PGL(n+1, \bR)_{P'} \ra \Aut(P')$ denote the restriction $g\mapsto g|P'$.
An {\em $i_{0}$-dimensional partial parabolic subgroup} is one mapping  under $R_{P'}$ isomorphically
to a parabolic subgroup of $\Aut(P')$ acting  freely on $E'-\{\bv\}$, fixing $\bv$. 
When $i_{0} = n-1$, we will drop the ``partial'' from the term ``partial parabolic group''.

 An {\em $i_0$-dimensional cusp group} is a finite extension of a projective conjugate  
of a discrete cocompact subgroup of a group of an $i_{0}$-dimensional partial parabolic subgroup. 
If the horospherical neighborhood with the p-end vertex $\bv$ 
has the p-end fundamental group that is a discrete $(n-1)$-dimensional cusp group, 
then we call the p-end to be {\em cusp-shaped}. 

Our first main result classifies CA p-R-ends. We need the notion of NPNC-ends that 
will be explained in Section \ref{sec-notprop}.

Given a cusp p-R-end, the p-end fundamental group $\Gamma_{\bv}$ acts on a p-end neighborhood $U$
and $\Gamma_{\bv}$ is a subgroup of an $(n-1)$-dimensional parabolic group $\mathcal{H}_{\bv}$. 
Since $\mathcal{H}_{\bv} \cap \Gamma_{\bv}$ is cocompact in $\mathcal{H}_{\bv}$, 
we let $F$ be a compact fundamental domain  of $\mathcal{H}_{\bv}$
with respect to $\mathcal{H}_{\bv} \cap \Gamma_{\bv}$. 
Hence, we can form 
\[V:=\bigcap_{g\in \mathcal{H}_{\bv}} g(U) = \bigcap_{g\in F} g(U). \]
By definition $V$ is $\mathcal{H}_{\bv}$-invariant. Since $F$ is compact and bounded 
in $\SL_\pm(n+1, \bR) $ (resp. in $\PGL(n+1, \bR)$), the set $V$ is not empty by the second part of the equality. 
$\partial V$ is an orbit of an $(n-1)$-dimensional parabolic group $\mathcal{H}_{\bv}$. 
Hence, $V$ is a cusp p-end neighborhood of $\tilde E$. 
Thus, a cusp R-end is horospherical.
Conversely, a horospherical R-end is a cusp R-end by Theorem \ref{thm-affinehoro}
under some assumption on $\orb$ itself. 

\begin{corollary}\label{cor-cusphor} 
Let $\mathcal O$ be a strongly tame properly convex real projective $n$-orbifold.
Let $E$  be an R-end of its universal cover $\torb$. 
Then $E$ is horospherical R-end if and only if $\tilde E$ is a cusp R-end. \qed
\end{corollary}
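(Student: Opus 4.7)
The statement is a biconditional, and one direction is essentially already set up in the paragraph immediately preceding the corollary. My plan is therefore to invoke that construction for one direction and to reduce the other direction to the referenced classification theorem on complete affine horospherical ends.

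For the direction cusp $\Rightarrow$ horospherical, I would argue as follows. Suppose $\tilde E$ is a cusp R-end with p-end vertex $\bv$, so that $\Gamma_{\bv} \subset \mathcal{H}_{\bv}$, where $\mathcal{H}_{\bv}$ is an $(n-1)$-dimensional parabolic group and $\Gamma_{\bv}$ is (virtually) a cocompact lattice in $\mathcal{H}_{\bv}$. Starting from any $\Gamma_{\bv}$-invariant p-end neighborhood $U$ and a compact fundamental domain $F$ for the action of $\Gamma_{\bv} \cap \mathcal{H}_{\bv}$ on $\mathcal{H}_{\bv}$, I would form
\[
V \;:=\; \bigcap_{g \in \mathcal{H}_{\bv}} g(U) \;=\; \bigcap_{g \in F} g(U).
\]
The second equality reduces the intersection to a compact family and shows $V$ is open and nonempty; by construction $V$ is $\mathcal{H}_{\bv}$-invariant, and its boundary is a single $\mathcal{H}_{\bv}$-orbit on an ambient ellipsoid, which is a horosphere tangent to $\Bd \torb$ at $\bv$. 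Hence $V$ is a horospherical p-end neighborhood and $\tilde E$ is horospherical.

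For the converse horospherical $\Rightarrow$ cusp, the plan is to exploit Theorem \ref{thm-affinehoro}, whose hypotheses give exactly the characterization of complete affine R-ends. A horospherical p-end neighborhood is bounded by a strictly convex embedded hypersurface with a unique tangency point $\bv_{\tilde E} \in \Bd \torb$, so $\Gamma_{\tilde E}$ fixes $\bv_{\tilde E}$, preserves the horosphere, and hence acts cocompactly and freely on it. In particular $\tilde \Sigma_{\tilde E}$ is a complete affine $(n-1)$-orbifold, and every $g \in \Gamma_{\tilde E}$ has $\lambda_{\bv_{\tilde E}}(g)$ of maximal norm among its eigenvalues (since otherwise iteration of $g$ would push the strictly convex horosphere off itself). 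This places $\tilde E$ in the regime covered by Theorem \ref{thm-affinehoro}, from which one concludes that $\Gamma_{\tilde E}$ is virtually nilpotent and conjugate into a full $(n-1)$-dimensional parabolic subgroup of the automorphism group of an ambient Klein ball, i.e.\ a cusp group.

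The main obstacle is in the horospherical $\Rightarrow$ cusp direction: one must rule out exotic discrete cocompact actions preserving a strictly convex hypersurface with a single ideal tangency point, showing in particular that no nontrivial Jordan block structure at $\bv_{\tilde E}$ can occur and that the Zariski closure is a genuine parabolic subgroup. This is exactly what the properly convex hypothesis on $\torb$ together with the eigenvalue analysis of Theorem \ref{thm-affinehoro} is designed to furnish; without that ambient properly convex hypothesis one could build complete affine ends that are not cusps, so the delicate point is that the strict convexity of the horoball, combined with the cocompact action on the cross-section inside a properly convex $\torb$, forces the holonomy eigenvalues to all lie on the unit circle with the required multiplicity structure.
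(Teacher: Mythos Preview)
Your proposal is correct and matches the paper's approach: the cusp $\Rightarrow$ horospherical direction is exactly the intersection construction $V=\bigcap_{g\in\mathcal{H}_{\bv}}g(U)$ given in the paragraph preceding the corollary, and the horospherical $\Rightarrow$ cusp direction is precisely Theorem~\ref{thm-affinehoro}(iii). One minor point: you over-justify the second direction, since Theorem~\ref{thm-affinehoro} takes ``horospherical'' as its sole structural hypothesis and derives the complete-affine property, the unit-norm eigenvalue condition, and the cusp conclusion internally; you do not need to separately verify those before invoking it.
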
 

\begin{theorem}\label{thm-mainaffine} 
Let $\mathcal O$ be a strongly tame properly convex real projective $n$-orbifold.
Let $\tilde E$  be a p-R-end of its universal cover $\torb$. 
Let $\bGamma_{\tilde E}$ denote the end fundamental group. 
Then $\tilde E$ is a complete affine p-R-end if and only if $\tilde E$ is a cusp p-R-end
or an NPNC-end with fibers of dimension $n-2$ by altering the p-end vertex.
Furthermore,  if $\tilde E$ is a complete affine p-R-end and 
$\Gamma_{\tilde E}$ satisfies the weak middle eigenvalue condition, 
then $\tilde E$ is a cusp p-R-end. 
\end{theorem}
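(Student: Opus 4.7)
The plan is to prove both directions of the biconditional and then derive the sharper cusp conclusion under the weak middle eigenvalue condition. For the easier converse direction, a cusp p-R-end is horospherical by Corollary~\ref{cor-cusphor}, so it has a horoball p-end neighborhood tangent at $\bv_{\tilde E}$ to the ellipsoid defining the horoball; radial directions from $\bv_{\tilde E}$ into the horoball fill the interior of a hemisphere in $\SI^{n-1}_{\bv_{\tilde E}}$, which is a complete affine $(n-1)$-space, so $\tilde \Sigma_{\tilde E}$ is complete affine. For an NPCC-end with fibers of dimension $n-2$, relocating the p-end vertex along a leaf of the foliating complete affine subspaces given by Proposition~\ref{prop-projconv} likewise exhibits the space of radial directions as a complete affine $(n-1)$-space.

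For the forward direction, let $H^{n-1}\subset \SI^{n-1}_{\bv_{\tilde E}}$ be the closure of the complete affine $\tilde \Sigma_{\tilde E}$, necessarily a closed hemisphere. Its boundary great $(n-2)$-sphere lifts to a $\bGamma_{\tilde E}$-invariant projective hyperplane $P$ through $\bv_{\tilde E}$ in $\SI^{n}$. Since every direction from $\bv_{\tilde E}$ into $\torb$ lies in the open hemisphere, proper convexity of $\torb$ forces $P$ to support $\torb$ at $\bv_{\tilde E}$. Choosing a basis $v_0,v_1,\ldots,v_{n-1},v_n$ with $\langle v_0\rangle=\bv_{\tilde E}$ and $\langle v_0,\ldots,v_{n-1}\rangle$ spanning $P$, every $h(g)$ for $g\in\bGamma_{\tilde E}$ becomes block upper-triangular with diagonal entries $\lambda_{\bv_{\tilde E}}(g),\;A(g),\;\mu(g)$, and the induced affine action on $A^{n-1}\cong \SI^{n-1}_{\bv_{\tilde E}}\setminus \bar P$ has linear part $\mu(g)^{-1}A(g)$.

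Now $\Sigma_{\tilde E}=\tilde\Sigma_{\tilde E}/\bGamma_{\tilde E}$ is a closed complete affine $(n-1)$-orbifold, so $\bGamma_{\tilde E}$ acts as a discrete cocompact subgroup of $\Aff(A^{n-1})$. At this point I will invoke a Fried--Goldman-type distality theorem for affine crystallographic groups: proper discontinuity plus cocompactness force the linear parts $\{\mu(g)^{-1}A(g)\}$ to be distal, so all eigenvalues of $A(g)$ have absolute value $|\mu(g)|$ and $\bGamma_{\tilde E}$ sits virtually inside a Euclidean motion group. Under the weak middle eigenvalue condition, if $|\lambda_{\bv_{\tilde E}}(g)|$ were the strict maximum it would have multiplicity $\geq 2$; combined with distality this pins $|\lambda_{\bv_{\tilde E}}(g)|=|\mu(g)|$ with every eigenvalue of $A(g)$ of the same norm for every $g$, and the $\SLnp$-normalization then gives $\lambda_{\bv_{\tilde E}}(g)=1$ for all $g$. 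The resulting Jordan structure will match that of an $(n-1)$-dimensional partial parabolic subgroup of $\Aut(B)$ for an ellipsoid $B$ tangent to $P$ at $\bv_{\tilde E}$, and $\bGamma_{\tilde E}$-invariance of the properly convex $\torb$ inside the half-space bounded by $P$ forces $\bGamma_{\tilde E}$ to be a cusp group in the sense of Section~\ref{sub-horo}; hence $\tilde E$ is cusp-shaped.

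Without the weak middle eigenvalue condition, some $g\in\bGamma_{\tilde E}$ has $|\lambda_{\bv_{\tilde E}}(g)|$ strictly dominating or strictly dominated by all other eigenvalue norms, and the distality argument still locates an invariant Jordan subspace inside $P$; replacing $\bv_{\tilde E}$ by a suitable $\bv'$ in this invariant subspace should yield a radial structure whose directional image is foliated by $(n-2)$-dimensional complete affine leaves over a one-dimensional base, i.e.\ an NPCC-end with $(n-2)$-dimensional fibers in the sense of Section~\ref{sec-notprop}. The main obstacle will be this last step together with the precise distality/Jordan matching in Step 3: one must show that the Euclidean linear parts combined with invariance of the properly convex $\torb$ bounded by the supporting $P$ force exactly the Jordan type of a parabolic cusp group, ruling out sheared or bent variants, and that the relocated vertex $\bv'$ defines a legitimate p-R-end structure in the sense of Section~\ref{sec-ends} with fibers of the correct dimension.
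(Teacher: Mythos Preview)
Your overall architecture matches the paper's---reduce to the affine crystallographic action on $\tilde\Sigma_{\tilde E}\cong A^{n-1}$, control the eigenvalue norms, and split into the unipotent (cusp) and two-norm (NPCC) cases---but the distality step is a genuine gap. You assert that ``proper discontinuity plus cocompactness force the linear parts $\{\mu(g)^{-1}A(g)\}$ to be distal,'' citing a Fried--Goldman-type theorem. No such theorem exists in that direction: Fried's result in \cite{Fried86} says that \emph{if} the linear holonomy of a compact complete affine manifold is distal \emph{then} the group is virtually nilpotent, not conversely, and compact complete affine manifolds need not have distal linear holonomy on those hypotheses alone. The paper's Theorem~\ref{thm-comphoro} obtains distality by using proper convexity of $\torb$ itself, not merely of $\tilde\Sigma_{\tilde E}$: if some $\gamma$ has an eigenvalue norm strictly exceeding both $\lambda_+(\gamma)$ and $\lambda_{\bv_{\tilde E}}(\gamma)$, iterating $\gamma$ on two generic points of a p-end neighborhood $U\subset\torb$ produces antipodal limit points in $\clo(\torb)$, contradicting proper convexity. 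Only after this dynamical elimination, plus the Goldman--Hirsch determinant identity (applied once solvability is established via the short exact sequence with unipotent kernel), does one conclude that every linear part on $\bR^{n-1}$ has all eigenvalue norms equal to $1$. Your block form is fine, but you must feed proper convexity of the ambient $\torb$ into the eigenvalue argument rather than bypass it.

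The NPCC relocation is also substantially thinner than what is required. In the paper's Corollary~\ref{cor-caseiii} one passes to the Zariski closure $Z$ (nilpotent of dimension $n-1$), replaces $U$ by $\bigcap_{g\in Z}g(U)$, and shows via Lemmas~\ref{lem-Ben2} and~\ref{lem-Ucpt} (using Benoist's structure theorem for nilpotent divisible actions) that $\clo(U)\cap A$ is a \emph{segment}; the new p-end vertex is its other endpoint $q$, not an arbitrary point of an invariant Jordan subspace. One then checks that $R_q(U)$ is neither complete affine (multiplicity of $\lambda_q$ is $n-1\neq n$) nor properly convex (Lemma~\ref{lem-Ben2} again since $\dim\geq 2$), hence NPCC with one-dimensional leaf space $K$, and invokes Theorem~\ref{thm-thirdmain} to get the quasi-joined structure. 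Your sketch neither locates $q$ nor verifies the fiber dimension.
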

\begin{proof}
Theorem \ref{thm-comphoro} is the forward direction.
Since a cusp p-R-end is horospherical, 
Theorem \ref{thm-affinehoro} 
{implies the converse in this case. 
Also, an NPNC-end with fibers of dimension $n-2$ becomes a complete affine end 
when we change the end vertex.}
The last statement follows by Corollary \ref{cor-cusp}. 
\end{proof}






\begin{theorem}[Horosphere] \label{thm-affinehoro} 
Let $\mathcal O$ be a strongly tame  properly convex real projective $n$-orbifold. 
Let $\tilde E$  be a horospherical R-end of its universal cover $\torb$, $\torb \subset \SI^n$ {\rm (}resp. $\subset \bR P^n${\rm )} 
and $\bGamma_{\tilde E}$ denote the p-end fundamental group. 
\begin{itemize}
\item[(i)] The space \hyperlink{term-lsphere}{$\tilde \Sigma_{\tilde E} = R_{\bv_{\tilde E}}(\torb) \subset \SI^{n-1}_{\bv_{\tilde E}}$}
equivalence classes of 
lines segments from the endpoint $\bv_{\tilde E}$ in $\torb$ forms a complete affine space of dimension $n-1$.
\item[(ii)] The norms of eigenvalues of $g \in \bGamma_{\tilde E}$
are all $1$.
\item[(iii)] $\bGamma_{\tilde E}$ is virtually abelian and a finite index subgroup is in 
a conjugate of an $(n-1)$-dimensional parabolic subgroup of $\SO(n, 1)$ of rank $n-1$ in $\SL_\pm(n+1, \bR)$ or $\PGL(n+1, \bR)$.
And hence $\tilde E$ is cusp-shaped.
\item[(iv)] For any compact set $K' \subset \orb$ inside a horospherical end neighborhood, 
$\orb$ contains a {horospherical end neighborhood} disjoint from $K'$. 
\item[(v)] A p-end vertex of a horospherical p-end cannot be an endpoint of a segment in $\Bd \tilde{\mathcal{O}}$. 
\end{itemize}
\end{theorem}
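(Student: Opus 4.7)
The strategy is to pin down the shape of $\torb$ at $\bv_{\tilde E}$ by exploiting strict convexity of the invariant horoball, then constrain the $\bGamma_{\tilde E}$-dynamics, and finally extract the algebraic structure. I would fix a $\bGamma_{\tilde E}$-invariant horoball p-end neighborhood $U \subset \torb$ with $\clo(U) \cap \partial\torb = \{\bv_{\tilde E}\}$ and its strictly convex horosphere $\partial U$. For (i): any supporting hyperplane of $\torb$ at $\bv_{\tilde E}$ is also supporting for $U$ at $\bv_{\tilde E}$, and since $\partial U$ is strictly convex it admits a unique tangent hyperplane $H$ there, so $H$ is the unique supporting hyperplane of $\torb$ at $\bv_{\tilde E}$. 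Every direction from $\bv_{\tilde E}$ strictly off $H$ on the $\torb$-side enters $U^o$ transversally (since $H$ is tangent to $\partial U$ only at $\bv_{\tilde E}$), so $R_{\bv_{\tilde E}}(\torb)$ contains the full open hemisphere of $\SI^{n-1}_{\bv_{\tilde E}}$ cut out by $H$; proper convexity forces $R_{\bv_{\tilde E}}(\torb)$ to equal this open hemisphere, a complete affine $(n-1)$-space.

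For (ii), take $g \in \bGamma_{\tilde E}$. Then $g$ fixes $\bv_{\tilde E}$, preserves $H$, and preserves $U$. If $g$ had an eigenvalue of norm strictly greater than $1$, the dominant eigenspace would, by proper convexity of $\torb$, yield an attracting fixed point $p$ of $g$ in $\partial\torb$. Invariance of $\clo(U)$ then forces $p \in \clo(U) \cap \partial\torb = \{\bv_{\tilde E}\}$, so iterates $g^n$ would contract $U$ onto $\bv_{\tilde E}$, contradicting proper discontinuity of $\bGamma_{\tilde E}$ on $U$. Applying the same argument to $g^{-1}$ rules out norm $<1$, so every eigenvalue of $g$ has norm $1$.

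For (iii), I would invoke the Zassenhaus--Kazhdan--Margulis lemma: a finite-index subgroup $\Gamma' \le \bGamma_{\tilde E}$ lies inside a connected nilpotent real-algebraic subgroup $N \le \SL_{\pm}(n+1,\bR)$ fixing $\bv_{\tilde E}$ and preserving $H$. Strong tameness gives a compact end-orbifold $\Sigma_E = \tilde{\Sigma}_{\tilde E}/\bGamma_{\tilde E}$, so $\Gamma'$ acts cocompactly on the affine space $\tilde\Sigma_{\tilde E} \cong \bR^{n-1}$; this forces $N$ to act transitively on $\tilde\Sigma_{\tilde E}$ by affine transformations, and a linear-algebraic analysis identifies $N$ up to conjugacy as the translation subgroup of the standard $(n-1)$-dimensional parabolic of an embedded $\SO(n,1)$ acting on its Klein-model horoball, yielding the asserted cusp-group structure. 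With this normal form (iv) follows, since the nested $\bGamma_{\tilde E}$-invariant family of concentric ellipsoidal horoballs is cofinal at $\bv_{\tilde E}$ and each is smooth and strictly convex, so some member is disjoint from $K'$; and (v) follows because $\partial\torb$ near $\bv_{\tilde E}$ is squeezed between two such strictly convex ellipsoidal horospheres, so any boundary segment ending at $\bv_{\tilde E}$ would, after spreading under a cocompact $\bGamma_{\tilde E}$-orbit, contradict this strict convexity.

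The main obstacle is the conjugation step in the previous paragraph: showing that a discrete virtually-nilpotent group with all eigenvalues of norm $1$ that fixes $\bv_{\tilde E}$, preserves $H$, and acts cocompactly by affine transformations on the open hemisphere $\tilde\Sigma_{\tilde E}$ must in fact be conjugate into the parabolic subgroup of an embedded copy of $\SO(n,1)$. This blends Benoist-style convex projective rigidity from Section~\ref{sub-ben} with Bieberbach-type rigidity on the horosphere, and is precisely where I would most directly leverage the inputs of Crampon--Marquis \cite{CM2} and Cooper--Long--Tillmann \cite{CLT2}.
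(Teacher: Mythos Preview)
Your approach to (i) via the unique supporting hyperplane is cleaner than the paper's, which instead runs a trichotomy on $\tilde\Sigma_{\tilde E}$ (properly convex, foliated, or complete affine) and rules out the first two by the same eigenvalue mechanism used for (ii). Your geometric argument that $H$ is simultaneously the unique tangent hyperplane to $\partial U$ and the unique supporting hyperplane to $\torb$ at $\bv_{\tilde E}$, hence $R_{\bv_{\tilde E}}(\torb)$ is the full open hemisphere, is correct and more direct.

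However, your argument for (ii) has a real gap. You correctly deduce that the attracting fixed point $p$ of $g$ lies in $\clo(U)\cap\Bd\torb=\{\bv_{\tilde E}\}$, but the conclusion ``iterates $g^n$ would contract $U$ onto $\bv_{\tilde E}$, contradicting proper discontinuity'' does not hold: $g(U)=U$ by invariance, and orbits escaping to a boundary point are perfectly compatible with a proper discontinuous (even cocompact) action on $\partial U$ --- think of a translation on $\bR^{n-1}$. What you have actually shown is only that $\lambda_{\bv_{\tilde E}}(g)$ has maximal norm among the eigenvalues. The fix is to run the same argument for $g^{-1}$ \emph{and combine}: the repelling fixed point of $g$ is also forced to equal $\bv_{\tilde E}$, so $\bv_{\tilde E}$ would lie in both the top and bottom eigenspaces of $g$, which is impossible when those norms differ. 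The paper argues slightly differently: since $\det g=\pm1$, some eigenvalue has norm distinct from $\lambda_{\bv_{\tilde E}}(g)$, and the corresponding limit of $\{g^m(x)\}$ produces a fixed point in $\clo(U)\setminus\{\bv_{\tilde E}\}$, contradicting the horoball condition directly. The paper also explicitly handles the case where the extremal eigenvalue is complex or negative (orbits then accumulate on a great circle or an antipodal pair, violating proper convexity), which you should address as well.

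For (iii), the paper uses Fried's distality theorem (complete affine, cocompact, all eigenvalue norms $1$ $\Rightarrow$ virtually nilpotent) rather than Zassenhaus--Kazhdan--Margulis; Fried's result is the more natural tool here since you are not working near the identity. After that, both you and the paper invoke Crampon--Marquis for the conjugation into $\SO(n,1)$, so your identification of this as the crux is accurate.
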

\begin{proof} 
We will prove for the case $\torb \subset \SI^n$. The $\bR P^n$-version follows from this. 
Let $U$ be a horospherical p-end neighborhood with the p-end vertex $\bv_{\tilde E}$.
The space of great segments from the p-end vertex passing $U$ forms a convex subset $\tilde \Sigma_{\tilde E}$ 
of a complete affine space $\bR^{n-1} \subset \SI^{n-1}_{\tilde E}$ by Proposition \ref{prop-projconv}.
The space 
covers an end orbifold $\Sigma_{\tilde E}$ 
with the discrete group $\pi_1(\tilde E)$ acting as a discrete subgroup $\Gamma'_{\tilde E}$ of
the projective automorphisms so that $\tilde \Sigma_{\tilde E}/\Gamma'_{\tilde E}$
 is projectively isomorphic to $\Sigma_{\tilde E}$.

(i) By Proposition \ref{prop-projconv}, one of the following three happens: 
\begin{itemize}
\item $\tilde \Sigma_{\tilde E}$ is properly convex.
\item $\tilde \Sigma_{\tilde E}$ 
is foliated by complete affine spaces of dimension $i_0$, $1 \leq i_{0} < n-1$, with the common boundary sphere of dimension $i_0-1$ and 
the space of the leaves forms a properly open convex subset $K^o$ of $\SI^{n-i_0-1}$. 
$\bGamma_{\tilde E}$ acts on $K^o$ cocompactly but perhaps not discretely.
\item $\tilde \Sigma_{\tilde E}$ is a complete affine space. 
\end{itemize}
We aim to show that the first two cases do not occur. 

Suppose that we are in the second case and 
 $1 \leq i_0 \leq n-2$. This implies that $\tilde \Sigma_{\tilde E}$ is foliated by complete affine spaces of dimension $i_0 \leq n-2$. 

For each element $g$ of $\bGamma_{\tilde E}$, a complex or negative eigenvalue of $g$ in $\bC - \bR_+$ 
cannot have a maximal or minimal absolute value different from $1$ and $\lambda_{\bv_{\tilde E}}(g)$: Otherwise 
$\{g^m(x)| m \in \bZ\}$ for a generic point $x$ of $U$ has accumulation points on 
a great circle or a pair of antipodal points disjoint from $\{\bv_{\tilde E}\}$. 
We take the convex hull of the orbits  in $\torb$ of $\{g^m(x)| m \in \bZ\}$. 
This is not properly convex, a contradiction. 
Thus, the largest and the smallest absolute value
eigenvalues of $g$ are positive. 


Since $\bGamma_{\tilde E}$ acts on a properly convex subset $K$ of dimension $\geq 1$,  an element 
$g$ has a norm of an eigenvalue $>1$ and a norm of eigenvalue $< 1$ by Proposition 1.1 of \cite{Ben5}
as a projective automorphism on the great sphere spanned by $K$.
Hence, we obtain the largest norm of eigenvalues and the smallest one of $g$ in $\Aut(\SI^n)$ both different from $1$. 
Therefore, let $\lambda_1(g) >1$ be the greatest norm of the eigenvalues of $g$  and 
$\lambda_2(g)< 1$ be the smallest norm  of the eigenvalues of $g$ as an element of $\SL_\pm(n+1, \bR)$. 
Let $\lambda_{\bv_{\tilde E}}(g) >0$ 
be the eigenvalue of $g$ associated with $\bv_{\tilde E}$. 
These are all positive. 
The possibilities for $g$ are as follows 
\begin{alignat*}{3}
 \lambda_1(g)  && \, = \lambda_{\bv_{\tilde E}}(g)  && \,> \lambda_2(g), \\ 
  \lambda_1(g)  && \,  > \lambda_{\bv_{\tilde E}}(g)  &&  \, > \lambda_2(g), \\
\lambda_1(g) && \, > \lambda_2(g)  && \,  = \lambda_{\bv_{\tilde E}}(g). 
\end{alignat*}
In all cases, at least one of the largest norm or the smallest norm is different from $\lambda_{\bv_{\tilde E}}(g)$. 
By the paragraph immediately above, this norm is realized by a positive eigenvalue. 
We take $g^{n}(x)$ for a generic point $x \in U$. As $n \ra \infty$ or $n \ra -\infty$, the sequence limits to a point $x_{\infty}$ 
in $\clo(U)$ distinct from $\bv_{\tilde E}$.  
Also, $g$ fixes a point $x_\infty$, and $x_{\infty}$ has a different positive eigenvalue from $\lambda_{\bv_{\tilde E}}(g)$.
{As $x_\infty \not \in \torb$,} it should be $x_\infty =\bv_{\tilde E}$ by the definition of the horoballs.
This is a contradiction. 

The first possibility is also shown not to occur similarly. Thus, 
 $\tilde \Sigma_{\tilde E}$ is a complete affine space. 

(ii) If $g \in \bGamma_{\tilde E}$ has a norm of eigenvalue different from $1$, then 
we can apply the second and the third paragraphs above to obtain a contradiction. 
We obtain $\lambda_j (g)= 1$ for each norm $\lambda_j(g)$ of eigenvalues of $g$ for every $g \in \bGamma_{\tilde E}$.


(iii) Since $\tilde \Sigma_{\tilde E}$ is a complete affine space, 
$\tilde \Sigma_{\tilde E}/\bGamma_{\tilde E}$ is a complete affine manifold with the norms of eigenvalues of holonomy matrices all equal to $1$
where $\bGamma'_{\tilde E}$ denotes the affine transformation group corresponding to $\bGamma_{\tilde E}$. 
(By D. Fried \cite{Fried86}, this implies that $\pi_1(\tilde E)$ is virtually nilpotent.) 
The conclusion follows by Proposition 7.21 of \cite{CM2} (related to Theorem 1.6 of \cite{CM2}): 
By the proposition, we see that $\bGamma_{\tilde E}$ is in a conjugate of $\SO(n, 1)$
and hence acts on an $(n-1)$-dimensional ellipsoid fixing a unique point. 
Since a horosphere has a Euclidean metric invariant under the group action, 
the image group is in a Euclidean isometry group. 
Hence, the group is virtually abelian by the Bieberbach theorem. 

(iv) We can choose an exiting sequence of p-end horoball neighborhoods $U_i$
where a cusp group acts. We can consider the hyperbolic spaces to understand this. 

(v) Suppose that $\Bd \torb$ contains a segment $s$ ending at the p-end vertex $\bv_{\tilde E}$. 
Then $s$ is on an invariant hyperspace of $\bGamma_{\tilde E}$.
Now conjugating $\bGamma_{\tilde E}$ into an $(n-1)$-dimensional parabolic subgroup $P$ of $\SO(n,1)$ fixing 
$(1,-1,0,\dots, 0) \in \bR^{n+1}$ by say an element $h$ of $\SL_{\pm}(n+1,\bR)$. 
By simple computations, 
we can find a sequence $g_i \in h\bGamma_{\tilde E}h^{-1} \subset P$ so that $\{g_i(h(s))\}$ geometrically converges to 
a great segment. Thus, for a sequence $h^{-1} g_{i} h \in \bGamma_{\tilde E}$, $h^{-1}g_{i}h(s)$ geometrically 
converges to a great segment in $\clo(\torb)$. 
This contradicts the proper convexity of $\torb$. 
\end{proof}





We will now show the converse of Theorem \ref{thm-affinehoro}.



\begin{theorem}[Complete affine]\label{thm-comphoro} 
Let $\orb$ be a strongly tame  properly convex $n$-orbifold. 
Suppose that $\tilde E$ is a complete-affine p-R-end of its universal cover $\torb$ in $\SI^n$ or 
in $\bR P^n$. Let $\bv_{\tilde E} \in \SI^n$  be the p-end vertex with the p-end fundamental group 
$\bGamma_{\tilde E}$. Then
\begin{itemize} 
\item[(i)]  
\begin{itemize}
\item $\bGamma_{\tilde E}$ is virtually unipotent where all norms of eigenvalues of elements equal $1$, or 
\item $\bGamma_{\tilde E}$ is virtually nilpotent where 
\begin{itemize}
\item each $g \in \bGamma_{\tilde E}$ has at most two norms of the eigenvalues, 
\item at least one $g \in \bGamma_{\tilde E}$ has two norms,  and 
\item if $g \in \bGamma_{\tilde E}$ has two distinct norms of the eigenvalues, 
the norm of $\lambda_{\bv_{\tilde E}}(g)$ has a multiplicity one.
\end{itemize}
\end{itemize}
\item[(ii)] In the first case, $\bGamma_{\tilde E}$ is horospherical, i.e., cuspidal. 
\end{itemize} 
\end{theorem}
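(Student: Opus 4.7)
The plan is to combine proper convexity of $\torb$ with the complete affine structure on $\tilde\Sigma_{\tilde E}$ to control the eigenvalues of each $g \in \bGamma_{\tilde E}$, and then invoke rigidity results for affine crystallographic groups. I would begin by setting up adapted coordinates: since $\tilde\Sigma_{\tilde E}$ is a complete affine $(n-1)$-space inside $\SI^{n-1}_{\bv_{\tilde E}}$, it has a $\bGamma_{\tilde E}$-invariant projective hyperplane at infinity, corresponding to a $\bGamma_{\tilde E}$-invariant codimension-two subspace $W \subset \bR^{n+1}$ containing $\bv_{\tilde E}$. In a basis compatible with the flag $\langle \bv_{\tilde E}\rangle \subset W \subset \bR^{n+1}$, each $g \in \bGamma_{\tilde E}$ is block upper-triangular with a $1$-dimensional diagonal entry $\lambda_{\bv_{\tilde E}}(g)$, a scalar $\alpha(g)$ acting on the line $\bR^{n+1}/W$, and an $(n-1)\times(n-1)$ block on $W/\langle\bv_{\tilde E}\rangle$ which, after rescaling by $\alpha(g)$, is the linear part of the affine action on $\tilde\Sigma_{\tilde E}$.

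Next I would carry out the eigenvalue analysis for a single $g \in \bGamma_{\tilde E}$. Exactly as in the proof of Theorem~\ref{thm-affinehoro}(i)--(ii), proper convexity of $\clo(\torb)$ rules out complex or negative eigenvalues of $g$ achieving extremal norms different from $1$ and $|\lambda_{\bv_{\tilde E}}(g)|$, since such eigenvalues force $g^m(x)$ to accumulate on a great circle or an antipodal pair in $\clo(\torb)$. Iterating $g^{\pm m}$ on a generic point of a p-end neighborhood and using that every limit lies in the properly convex $\clo(\torb)$, one deduces that at most two distinct norms occur among the eigenvalues of $g$, and that whenever two distinct norms occur, $|\lambda_{\bv_{\tilde E}}(g)|$ must appear with multiplicity one --- otherwise the generalized eigenspace sharing its norm, together with $\bv_{\tilde E}$, would yield orbit accumulations producing an antipodal pair in $\clo(\torb)$. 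This establishes part (i), case (a) being the all-norms-one situation and case (b) the mixed situation.

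For part (ii), assume every $g \in \bGamma_{\tilde E}$ has all eigenvalue norms equal to $1$. Then so do the linear parts of the affine $\bGamma_{\tilde E}$-action on $\tilde\Sigma_{\tilde E} \cong \bR^{n-1}$, and since $\tilde\Sigma_{\tilde E}/\bGamma_{\tilde E}$ is a closed complete affine $(n-1)$-orbifold, Fried's theorem \cite{Fried86} forces $\bGamma_{\tilde E}$ to be virtually nilpotent with virtually unipotent linear holonomy. Proposition~7.21 of Crampon--Marquis \cite{CM2}, applied exactly as in the proof of Theorem~\ref{thm-affinehoro}(iii), then yields that a finite-index subgroup of $\bGamma_{\tilde E}$ is conjugate into an $(n-1)$-dimensional parabolic subgroup of $\SO(n,1) \subset \SL_{\pm}(n+1,\bR)$. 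Averaging a horoball over a compact fundamental domain in the parabolic Zariski closure produces a $\bGamma_{\tilde E}$-invariant horoball whose intersection with $\torb$ is a cusp-shaped p-end neighborhood.

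The main obstacle is the eigenvalue analysis in the middle step: extracting from proper convexity of $\clo(\torb)$ alone both the ``at most two norms'' statement and the multiplicity-one conclusion on $|\lambda_{\bv_{\tilde E}}(g)|$. This demands careful Jordan-form bookkeeping and a sharper control of orbit closures than in Theorem~\ref{thm-affinehoro}, since we cannot yet assume the horospherical conclusion --- that is in fact what part (ii) is meant to establish.
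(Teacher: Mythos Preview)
Your outline for part~(i) has a genuine gap: proper convexity together with orbit-iteration arguments does \emph{not} by itself yield the ``at most two norms'' conclusion. What iteration and the arguments of Theorem~\ref{thm-affinehoro} give you is only an \emph{inequality}: writing $\lambda_+(g)$ for the eigenvalue on the affine line $\bR^{n+1}/W$ (your $\alpha(g)$), one shows in the relevant cases that every eigenvalue norm $|\tilde\lambda_i(g)|$ lies between $\lambda_+(g)$ and $|\lambda_{\bv_{\tilde E}}(g)|$. To collapse all intermediate norms to $\lambda_+(g)$, the paper uses a key ingredient you have not identified: after first showing that $\bGamma_{\tilde E}$ is \emph{solvable} (the kernel of $g\mapsto\lambda_{\bv_{\tilde E}}(g)$ is virtually unipotent by Fried \cite{Fried86}, and the image is abelian), one invokes Proposition~S of Goldman--Hirsch \cite{GH}, which says that a solvable group acting affinely and cocompactly on a complete affine space acts by \emph{unimodular} affine transformations. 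The determinant-one condition on the induced affine action on $\tilde\Sigma_{\tilde E}\cong\bR^{n-1}$, combined with the sandwich inequality above, is precisely what forces all eigenvalue norms other than $|\lambda_{\bv_{\tilde E}}(g)|$ to coincide with $\lambda_+(g)$, and simultaneously yields the virtually-nilpotent conclusion via Fried again. Without this step there is no mechanism to rule out three or more distinct norms: an element with eigenvalue norms $\lambda_1>\lambda_2>\lambda_3=\lambda_{\bv_{\tilde E}}$ can perfectly well preserve a properly convex cone---it is only the \emph{cocompactness} of the affine action, through Goldman--Hirsch, that excludes this. Your proposed ``sharper control of orbit closures'' cannot supply it.

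Two smaller remarks. First, your $W$ should be codimension-one in $\bR^{n+1}$, not codimension-two: the hyperplane at infinity of the affine $(n-1)$-space $\tilde\Sigma_{\tilde E}$ is an $(n-2)$-sphere in $\SI^{n-1}_{\bv_{\tilde E}}$, which lifts to a hyperplane of $\bR^{n+1}$ through $\bv_{\tilde E}$. Second, for part~(ii) the paper does not re-invoke Crampon--Marquis as in Theorem~\ref{thm-affinehoro}(iii); it gives a separate argument (Lemma~\ref{lem-unithoro}) building the unipotent hull via Fried and Malcev and then analyzing the action on a full flag in the supporting hyperspace to force $\Bd U\cap W=\{\bv_{\tilde E}\}$. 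Your route through \cite{CM2} may well work, but note that Theorem~\ref{thm-affinehoro}(iii) already \emph{assumes} a horospherical neighborhood, whereas here that is exactly what must be produced.
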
 

\begin{proof} 
{The proof here is for $\SI^n$ but it implies the $\bR P^n$-version. }
Using Selberg's lemma, we may choose a torsion-free finite-index subgroup. 
We may assume without loss of generality that $\Gamma$ is torsion-free 
since we only need to prove the theorem for a finite index subgroup. 
Hence, $\Gamma$ does not fix a point in $\tilde \Sigma_{\tilde E}$. 


(i) Since $\tilde E$ is complete affine, $\tilde \Sigma_{\tilde E} \subset \SI^{n-1}_{\bv_{\tilde E}}$ is identifiable with $\bR^{n-1}$. 
$\bGamma_{\tilde E}$ induces $\Gamma'_{\tilde E}$ in $\Aff(\bR^{n-1})$
that are of form $x \mapsto Mx + b$ where $M$ is a linear map $\bR^{n-1} \ra \bR^{n-1}$
and $b$ is a vector in $\bR^{n-1}$. 
For each $\gamma \in \bGamma_{\tilde E}$, 
\begin{itemize}
\item let $\gamma_{\bR^{n-1}}$ denote this affine transformation, and
\item we denote by $\hat L(\gamma_{\bR^{n-1}})$ the linear part of the affine transformation $\gamma_{\bR^{n-1}}$. 
\item Let $\vec{v}(\gamma_{\bR^{n-1}})$ denote  the translation vector. 
\end{itemize}

At least one eigenvalue of ${\hat L}(\gamma_{\bR^{n-1}})$ is $1$ 
since $\gamma$ acts without fixed point on $\bR^{n-1}$.
(See \cite{KS}.)
Now, ${\hat L}(\gamma_{\bR^{n-1}})$ has a maximal 
invariant vector  subspace $A$ of $\bR^{n-1}$ where all norms of the eigenvalues are $1$.

Suppose that $A$ is a proper $\gamma$-invariant vector subspace of $\bR^{n-1}$. 
Then $\gamma_{\bR^{n-1}}$ acts on the affine space $\bR^{n-1}/A$ 
as an affine transformation with the linear parts without a norm of eigenvalue equal to $1$.
Hence, $\gamma_{\bR^{n-1}}$ has a fixed point in $\bR^{n-1}/A$, and 
$\gamma_{\bR^{n-1}}$ acts on an affine subspace $A'$ parallel to $A$.

A subspace $H$ containing ${\bv_{\tilde E}}$ corresponds to the direction of $A'$ from $\bv_{\tilde E}$.
The union of segments with endpoints $\bv_{\tilde E}, \bv_{\tilde E-}$ in the directions in $A' \subset \SI^{n-1}_{\bv_{\tilde E}}$
is an open hemisphere of dimension $n$. 
Let $H^{+}$ denote this space where $\Bd H^{+} \ni \bv_{\tilde E}$ holds.
Since $\bGamma_{\tilde E}$ acts on $A'$, it follows that
$\bGamma_{\tilde E}$ acts on $H^{+}$.
Then $\gamma$ has at most two eigenvalues associated with $H^{+}$ one of which is {$\lambda_{\bv_{\tilde E}}(\gamma)$}
and the other is to be denoted $\lambda_{+}(\gamma)$.
We obtain $\lambda_{+}(\gamma)$ by writing 
\[ \gamma = 
\left(
\begin{array}{ccc}
 \lambda_{+}(\gamma) {\hat L}(\gamma_{\bR^{n-1}})  &  \lambda_{+}(\gamma) \vec{v}(\gamma_{\bR^{n-1}}) & 0\\
 0 &  \lambda_{+}(\gamma) &  0\\
 \ast &  \ast &  \lambda_{\bv_{\tilde E}}(\gamma) 
\end{array}
\right) 
\]
where we have
\[ \lambda_{+}(\gamma)^{n}\det({\hat L}(\gamma_{\bR^{n-1}})) \lambda_{\bv_{\tilde E}}(\gamma)  = \pm 1.\]
{(Note $\lambda_{\bv_{\tilde E}}(\gamma^{-1}) = \lambda_{\bv_{\tilde E}}(\gamma)^{-1}$
and $\lambda_{+}(\gamma^{-1}) = \lambda_{+}(\gamma)^{-1}$.)}

We will show that $\gamma$ is unimodular as an affine transformation
of $\bR^{n-1}$, i.e., all norms of eigenvalues are $1$. 
There are following possibilities for each $\gamma \in \bGamma_{\tilde E}$:
\begin{itemize}
\item[(a)] $\lambda_{1}(\gamma) > \lambda_{+}(\gamma), \lambda_{\bv_{\tilde E}}(\gamma)$.
\item[(b)] $\lambda_{1}(\gamma) = \lambda_{+}(\gamma)= \lambda_{\bv_{\tilde E}}(\gamma)$.
\item[(c)] $\lambda_{1}(\gamma) = \lambda_{+}(\gamma), \lambda_{1}(\gamma) > \lambda_{\bv_{\tilde E}}(\gamma)$.
\item[(d)]  $\lambda_{1}(\gamma) > \lambda_{+}(\gamma), \lambda_{1}(\gamma) =\lambda_{\bv_{\tilde E}}(\gamma)$.
\end{itemize}

Suppose that $\gamma$ satisfies (b). The relative eigenvalues of $\gamma$ on $\bR^{n-1}$ are all $\leq 1$. 
Either $\gamma$ is unimodular or we can take $\gamma^{-1}$ and we are in case (a).  

Suppose that $\gamma$ satisfies (a).
There exists a projective subspace $S$ of dimension $\geq 0$ where 
the points are associated with eigenvalues with the norm $\lambda_{1}(\gamma)$
where $\lambda_{1}(\gamma) >  \lambda_{+}(\gamma), \lambda_{\bv_{\tilde E}}(\gamma)$. 

Let $S'$ be the smallest subspace containing $H$ and $S$. Let $U$ be a p-end neighborhood of $\tilde E$. 
Let $y_1$ and $y_2$ be generic points of $U \cap S' - H$
so that $\ovl{y_1 y_2}$ meets $H$ in its interior.

Then we can choose a subsequence $m_i$, $m_i \ra \infty$, so that 
$\gamma^{m_i}(y_1) \ra f$ and $\gamma^{m_i}(y_2) \ra f_-$ as $i \ra +\infty$
unto relabeling $y_1$ and $y_2$ for a pair of antipodal points $f, f_- \in S$.
This implies $f, f_- \in \clo(\torb)$, 
and $\torb$ is not properly convex, which is a contradiction. 


If $\gamma$ satisfies (c), then 
\begin{equation}\label{eqn-c}
\lambda_{1}(\gamma) = \lambda_{+}(\gamma) \geq \lambda_{i}(\gamma) \geq\lambda_{\bv_{\tilde E}}(\gamma)
\end{equation} 
for all other norms of eigenvalues $\lambda_{i}(\gamma)$: 
Otherwise, we can use the argument similar to above to obtain contradiction
where we also have to consider $\gamma^{-1}$.
Similarly if $\gamma$ satisfies (d), then 
we have 
\begin{equation}\label{eqn-d}
 \lambda_{1}(\gamma) =\lambda_{\bv_{\tilde E}}(\gamma)\geq \lambda_{i}(\gamma) \geq \lambda_{+}(\gamma)
 \end{equation}
for all other norms of eigenvalues $\lambda_{i}(\gamma)$.




There is a homomorphism 
\[\lambda_{\bv_{\tilde E}}: \bGamma_{\tilde E} \ra \bR_{+} \hbox{ given by } g \mapsto \lambda_{\bv_{\tilde E}}(g).\]
This gives us an exact sequence 
\[ 1 \ra N \ra \bGamma_{\tilde E}  \ra R \ra 1 \]
where $R$ is a finitely generated subgroup of $\bR_{+}$, an abelian group.
For an element $g \in N$, 
$\lambda_{\bv_{\tilde E}}(g) = 1$. 
Since the relative eigenvalue corresponding to $\hat L(g_{\bR^{n-1}})|A$ is $1$, 
a matrix form shows that $\lambda_{+}(g) = 1$. 
Equations \eqref{eqn-c} and \eqref{eqn-d} show that $g$ is unimodular. 
%
Thus, $N$ is therefore virtually unipotent by Fried \cite{Fried86} again. 
Taking a finite cover again, we may assume that $N$ is unipotent. 

Since $R$ is a finitely generated abelian group, 
$\bGamma_{\tilde E}$ is solvable. 
Since $\tilde \Sigma_{\tilde E}= \bR^{n-1}$ is complete affine, 
Proposition S of Goldman and Hirsch  \cite{GH} implies 
\[\det(g_{\,\bR^{n-1}}) = 1 \hbox{ for all } g \in \bGamma_{\tilde E}.\]

If $\gamma$ satisfies (c), then all norms of eigenvalues of $\gamma$ 
except for $\lambda_{\bv_{\tilde E}}(\gamma)$ equal $\lambda_{+}(\gamma)$ 
since otherwise by equation \eqref{eqn-c}, the above determinant is less than $1$. 
Similarly, if $\gamma$  satisfies (d),  then all norms of eigenvalues of $\gamma$ 
except for $\lambda_{\bv_{\tilde E}}(\gamma)$ equals $\lambda_{+}(\gamma)$.

Therefore, only (b), (c), and (d) hold and $g_{\, \bR^{n-1}}$ is unimodular for every $g \in \bGamma_{\tilde E}$. 
Hence, $\bGamma_{\tilde E}|\bR^{n-1}$ is virtually unipotent group and hence is virtually nilpotent
by Fried \cite{Fried86} again. 

Suppose that every $\gamma$ is unimodular. Then we have the first case of (i). If not, then the second case of (i) holds. 

(ii) This follows by Lemma \ref{lem-unithoro}.

\end{proof} 

The second case will be studied later. See Corollary \ref{cor-caseiii}.
We will show the end $\tilde E$ to be a NPNC-end with fiber dimension $n-2$ when we choose another point as the new p-end vertex for $\tilde E$.
Clearly, this case is not horospherical. 


\begin{lemma}\label{lem-unithoro}
 Suppose that eigenvalues of elements of  $\bGamma_{\tilde E}$ have unit norms only. 
Then a nilpotent Lie group fixing $\bv_{\tilde E}$ contains a finite index subgroup of $\bGamma_{\tilde E}$
and $\tilde E$ is horospherical, i.e., cuspidal. 
\end{lemma}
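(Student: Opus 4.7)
The plan is to upgrade the unit-norm eigenvalue hypothesis to virtual unipotence, extract the nilpotent Lie group as a Zariski closure, and finally produce an invariant horoball via the Crampon--Marquis conjugation into $\SO(n,1)$ already used in the proof of Theorem \ref{thm-affinehoro}(iii).

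First, I would observe that the eigenvalue $\lambda_{\bv_{\tilde E}}(\gamma)$ is a real eigenvalue of $\gamma$, and it must be positive since $\gamma$ preserves the properly convex domain $\torb$ with $\bv_{\tilde E}$ on its boundary and segments going into $\torb$ in a specific direction. Combined with the hypothesis that every eigenvalue has unit norm, we conclude $\lambda_{\bv_{\tilde E}}(\gamma) = 1$ for all $\gamma \in \bGamma_{\tilde E}$. The induced affine action of $\bGamma_{\tilde E}$ on $\tilde\Sigma_{\tilde E} = \bR^{n-1}$, whose linear parts $\hat L(\gamma_{\bR^{n-1}})$ inherit the unit-norm eigenvalue condition, is affine holonomy of a complete affine manifold. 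Applying Fried's theorem \cite{Fried86} (as invoked earlier in the proof of Theorem \ref{thm-comphoro}), $\bGamma_{\tilde E}$ is virtually nilpotent, and after passing to a finite-index subgroup $\Gamma'$ we may assume every element is unipotent, since all its eigenvalues equal $1$.

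Next, I would take the Zariski closure $N$ of $\Gamma'$ inside $\SL_\pm(n+1,\bR)$. Every element of $\Gamma'$ is a unipotent matrix fixing the vector lifting $\bv_{\tilde E}$; by Kolchin's theorem the whole group $N$ is simultaneously conjugate into the upper-triangular unipotent group, and in particular $N$ is a connected nilpotent Lie group fixing $\bv_{\tilde E}$ that contains the finite-index subgroup $\Gamma'$ of $\bGamma_{\tilde E}$. This establishes the first conclusion of the lemma.

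For the cuspidal conclusion, I would apply Proposition~7.21 of Crampon--Marquis \cite{CM2} exactly as in the proof of Theorem \ref{thm-affinehoro}(iii): a discrete virtually unipotent subgroup acting cocompactly on the complete affine $\tilde\Sigma_{\tilde E}$ and preserving a properly convex $\torb$ with fixed point $\bv_{\tilde E}$ is, up to conjugation, a subgroup of the $(n-1)$-dimensional parabolic subgroup of $\SO(n,1)$ fixing an isotropic direction through $\bv_{\tilde E}$. The horoballs of this ambient hyperbolic model are $N$-invariant, and by taking any sufficiently deep one and using proper convexity of $\torb$ together with the fact that every $\Gamma'$-orbit has $\bv_{\tilde E}$ as its only accumulation point, one such horoball sits inside $\torb$ and serves as a horospherical p-end neighborhood of $\tilde E$, showing that $\tilde E$ is cuspidal.

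The main obstacle I expect is the last step: passing from the algebraic fact that $\Gamma'$ conjugates into a parabolic subgroup of $\SO(n,1)$ to the geometric fact that a genuinely $\bGamma_{\tilde E}$-invariant horoball fits inside $\torb$. The Crampon--Marquis model provides an invariant ellipsoid, but one must verify that this ellipsoid is compatible with the asymptotic geometry of $\torb$ near $\bv_{\tilde E}$; this requires combining properness of $\torb$ (Theorem~\ref{thm-affinehoro}(v) type arguments, ruling out boundary segments at $\bv_{\tilde E}$) with the cocompactness of $\Gamma'$ on $\tilde\Sigma_{\tilde E}$ to match the affine structures.
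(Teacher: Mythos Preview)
Your approach is correct and takes a genuinely different route from the paper. Both begin the same way, invoking Fried to obtain virtual nilpotence and then producing a nilpotent Lie group $N$ containing a finite-index subgroup (you via Zariski closure and Kolchin, the paper via Fried and Malcev). For the horospherical conclusion, however, the paper argues directly from the dynamics of $N$: it makes a p-end neighborhood $U$ invariant under $N$, considers the supporting hyperspace $W$ at $\bv_{\tilde E}$, and uses the flag structure preserved by a unipotent group (Varadarajan, Theorem~3.5.3) together with the $N$-invariant Euclidean metric on each complementary open hemisphere (Fried again) to show that any accumulation of an $N$-orbit in $W \setminus \{\bv_{\tilde E}\}$ would force an entire affine half-space into $\clo(\torb)$, contradicting proper convexity. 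Hence $\Bd U = (\Bd U \cap \torb) \cup \{\bv_{\tilde E}\}$ and $U$ is already a horoball. Your route instead invokes Crampon--Marquis to conjugate into a parabolic subgroup of $\SO(n,1)$ and then fits a model horoball into $\torb$.

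The obstacle you flag is real but more tractable than you suggest, and the paper already disposes of it in the paragraph preceding Corollary~\ref{cor-cusphor}: once $\Gamma'$ is cocompact in the parabolic group $\mathcal{H}_{\bv}$, take any radial p-end neighborhood $U \subset \torb$ and form $V = \bigcap_{g \in \mathcal{H}_{\bv}} g(U) = \bigcap_{g \in F} g(U)$ over a compact fundamental domain $F \subset \mathcal{H}_{\bv}$; this is nonempty, $\mathcal{H}_{\bv}$-invariant, contained in $U$, and its boundary in $\torb$ is a single $\mathcal{H}_{\bv}$-orbit. Equivalently, your ``sufficiently deep horoball'' works because both $U$ and the model horoballs are $\Gamma'$-invariant and $\Gamma'$ acts cocompactly on the space $\tilde\Sigma_{\tilde E}$ of radial directions, so the inclusion need only be checked over a compact set of directions. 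No compatibility between the Crampon--Marquis ellipsoid and $\Bd\torb$ is required. What your route buys is brevity and a clean parallel with Theorem~\ref{thm-affinehoro}(iii); what the paper's direct flag-structure argument buys is independence from the Crampon--Marquis black box at this point of the logic.
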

 \begin{proof}
 Since $\tilde \Sigma_{\tilde E}/\bGamma_{\tilde E}$ is a compact complete-affine manifold, 
a finite index subgroup $F$ of $\bGamma_{\tilde E}$ is contained in a nilpotent Lie subgroup
acting on $\tilde \Sigma_{\tilde E}$ by Theorem 3 in Fried \cite{Fried86}.
Now, by Malcev, it follows that the same group is contained in 
a nilpotent group $N$ acting on $\SI^n$ since $F$ is unipotent. 
The dimension of $N$ is $n-1 = \dim \tilde \Sigma_{\tilde E}$ by Theorem 3 of \cite{Fried86} again. 

Let $U$ be a component of the inverse image of a p-end neighborhood 
{so that ${\bv_{\tilde E}} \in \Bd U$.} Assume that $U$ is a radial p-end neighborhood of $\bv_{\tilde E}$. 
A finite index subgroup $F$ of $\bGamma_{\tilde E}$ is in $N$ so that $N/F$ is compact by Malcev's results.  
$N$ acts on a smaller open set covering a p-end neighborhood
by taking intersections under images of it under $N$ if necessary. 
We let $U$ be this open set from now on. Consequently, $\Bd U \cap \torb$ is smooth. 
We will now show that $U$ is a horospherical p-end neighborhood:
We identify ${\bv_{\tilde E}}$ with $[1, 0, \dots, 0]$.
Let $W$ denote the subspace in $\SI^n$ containing ${\bv_{\tilde E}}$ supporting $U$. 
$W$ corresponds to the boundary of the direction of $\tilde \Sigma_{\tilde E}$ and hence is unique 
and, thus, $N$-invariant. Also, 
$W \cap \clo(\torb)$ is a properly convex subset of $W$. 

Let $y$ be a point of $U$. 
Suppose that $N$ contains a sequence $\{g_i\}$ so that  
\[g_i(y) \ra x_0 \in W \cap \clo(\torb) \hbox{ and } x_0 \ne {\bv_{\tilde E}};\] 
that is, $x_0$ in the boundary direction of $A$ from $\bv_{\tilde E}$. 
Let $U_{1} = \clo(U) \cap W$. Let $V$ be the smallest subspace containing $\bv_{\tilde E}$ and $U_{1}$. 
The dimension of $V$ is $\geq 1$ as it contains $x_0$.

Again $N$ acts on $V$. 
Now, $V$ is divided into disjoint open hemispheres of various dimensions where $N$ acts on:
By Theorem 3.5.3 of \cite{Var}, $N$ preserves a flag structure 
$V_0 \subset V_1 \subset \dots \subset V_k = V$.
We take components of complement $V_i - V_{i-1}$. 
Let $H_V:=V - V_{k-1}$. 

Suppose that $\dim V = n-1$ for contradiction. 
Then $H_V \cap U_1$ is not empty since otherwise, we would have a smaller dimensional $V$. 
Let $h_V$ be the component of $H_V$ meeting $U_1$.
Since $N$ is unipotent, $h_V$ has an $N$-invariant metric by Theorem 3 of Fried \cite{Fried86}.

We claim that the orbit of the action of $N$ is of dimension $n-1$ and hence locally transitive on $H_V$: 
If not, then a one-parameter subgroup $N'$ fixes a point of $h_V$.  
This group acts trivially on $h_V$ since the unipotent group contains a trivial orthogonal subgroup. 
Since $N'$ is not trivial, it acts as a group of nontrivial translations on the affine space $H^o$.
We obtain that $N'(U)$ is not properly convex, and 
an orbit of $N$ is open in $h_{V}$. 
Hence, $N$ acts locally simply-transitively without fixed points.

The orbit of $N$ in $h_V$ is closed  since $h_{V}$ has an $N$-invariant metric. 
Thus, $N$ acts transitively on $h_V$. 

Hence, the orbit $N(y)$ of $N$ for $y \in H_V \cap U_1$ contains a component of $H_V$. 
Since $\bGamma_{\tilde E}(y) \subset \clo(\torb)$
and a convex hull in $\clo(\torb)$ is $N(y)$ where $N(y) \subset H_V$. 
Since $F \bGamma_{\tilde E}  = N$ for a compact subset $F$ of $N$, 
the orbit $\bGamma_{\tilde E}(y)$ is within a bounded distance from every point of $N(y)$. 
Thus, a convex hull in $\clo(H_V)$ is $N(y)$, and 
this contradicts the assumption that $\clo(\torb)$ is properly convex
(compare with arguments in \cite{CM2}.)

$N$ acts transitively on $A$ by Propositions S and T of Goldman and Hirsch \cite{GH}
 since $A/\bGamma_{\tilde E}$ is compact. 

Suppose that the dimension of $V$ is $\leq n-2$. 
Let $J$ be a subspace of dimension $1$ bigger than $\dim V$ and containing $V$ and meeting $U$. 
Then $J$ is sent to disjoint subspaces or to itself under $N$. Since $N$ acts transitively on $A$, 
a nilpotent subgroup $N_J$ of $N$ acts on $J$.
Now we are reduced to $\dim V$ by one or more. 
The orbit $N_J(y)$ for a limit point $y \in H_V$ contains a component of $V - V_{k-1}$
as above. Thus, $N_J(y)$ contains the same component, an affine subspace. 
As above, we have a contradiction to the proper convexity. 

Therefore, points such as $x_0 \in W \cap \Bd(\torb)  - \{\bv_{\tilde E}\}$ do not exist. 
Hence for any sequence of elements $g_i \in \bGamma_{\tilde E}$, we have
$g_i(y) \ra \bv_{\tilde E}$. 

Hence, $\Bd U = (\Bd U \cap \torb) \cup \{ \bv_{\tilde E}\}$.  
Clearly, $\Bd U$ is homeomorphic to an $(n-1)$-sphere.
Since $U$ is radial, this means that $U$ is a horospherical p-end neighborhood. 

\end{proof}


\section{The uniform middle eigenvalue condition and properly convex radial ends} \label{sec-endth}

In this section, we define various types of middle eigenvalue conditions for properly convex radial ends. 
We will just state results from \cite{End2} without proofs. 
We state the equivalence {of the lens condition} and the uniform middle eigenvalue condition for R-ends. 
Finally, we discuss the quasi-lens-shaped R-ends. 

Let $\orb$ be real projective orbifold with R-ends or T-ends.
Let $\tilde E$ be a p-R-end  or a p-T-end and $\bGamma_{\tilde E}$ the associated p-end fundamental group.
Let $\tilde \Sigma_{\tilde E}$ denote the universal cover of the end orbifold $\Sigma_{\tilde E}$ associated with $\tilde E$. 
If every subgroup of finite index of a group $\bGamma_{\tilde E} \subset \bGamma$ has a finite center, 
we say that $\bGamma_{\tilde E}$ 
is a {\em virtual center-free group} or  a {\em vcf-group}.

We say that 
$\bGamma_{\tilde E}$ is an {\em pseudo-admissible group} if the following holds:
\begin{itemize} 
\item $\clo(\tilde \Sigma_{\tilde E}) = K_{1}\ast \cdots \ast K_{k}$ 
where each $K_{i}$ is properly convex or is $0$-dimensional and 
\item $\bGamma_{\tilde E}$ is virtually a direct product 
$\Gamma_{1} \times \cdots \times \Gamma_{k}$ 
where 
\begin{itemize}
\item $\Gamma_{i}$ acts on $K_{j}$ trivially for $j\ne i$, and  
\item $K^{o}_{i}/\Gamma_{i}$ is compact. 
\end{itemize}
\end{itemize}
Let $r_{K_{i}}: \bGamma_{\tilde E} \ra \Aut(K_{i})$ denote the restriction homomorphism $g \mapsto g|K_{i}$ for $g \in \bGamma_{\tilde E}$. 
It follows that $\Gamma_{i}$ is isomorphic to $r_{K_{i}}(\Gamma_{i})$. 

The group $\bGamma_{\tilde E}$ is {\em admissible} 
if each $\Gamma_{i}$ is a hyperbolic group or equivalently $K_{i}$ is strictly convex
by \cite{Ben1}. We call $\Gamma_{i}$ the {\em hyperbolic factor} of $\bGamma_{\tilde E}$. 

We note {that the strict convexity} of $K_{i}$ is needed for technical reason and we hope to eliminate it in the future. 
{(See \cite{book}.)}

If two groups $G_{1}$ and $G_{2}$ have finite-index subgroups isomorphic to each other, we write $G_{1} \cong G_{2}$. 

(See Section \ref{sub-ben} for details. In this paper, we will simply use $\bZ^{k-1}$ and $\Gamma_i$ to denote 
the subgroup in $\bGamma_{\tilde E}$ corresponding to it.)
We say that 
$\tilde E$ is {\em virtually non-factorable} if the center is trivial for every finite index subgroup of $\bGamma_{\tilde E}$; 
otherwise, $\tilde E$ is virtually factorable. 


Let $\Gamma$ be generated by finitely many elements $g_1, \ldots, g_m$. 
The {\em conjugate word length} $\cwl(g)$ of $g \in \pi_1(\tilde E)$ is the minimum of 
the word length of the conjugates of $g$ in $\pi_1(\tilde E)$. 


Let $\Omega$ be a convex domain in an affine space $A$ in $\bR P^n$ or $\SI^n$. 
Let $o, s, q, p$ denote four points on a one dimensional subspace $l$, and let 
$\bar o, \bar p, \bar q, \bar s$ denote respectively the first coordinates of the homogeneous coordinates  of $l$
so that the second coordinates are normalized to be $1$. 
Then $[o, s, q, p]$ denotes the cross ratio of four points on a one-dimensional subspace as defined by 
\[ \frac{\bar o - \bar q}{\bar s - \bar q} \frac{\bar s - \bar p}{\bar o - \bar p}. \] 
Define a metric by defining for every $p, q \in \Omega$, 
\[d_\Omega(p, q)= \log|[o,s,q,p]|\] where $o$ and $s$ are 
endpoints of the maximal segment $l$ in $\Omega$ containing $p, q$
where $o, q$ separates $p, s$ in $l$. 
The metric is one given by a Finsler metric provided $\Omega$ is properly convex. (See \cite{Kobpaper}.)
Given a properly convex real projective structure on ${\mathcal{O}}$, it carries a Hilbert metric which we denote by $d_{\torb}$ 
on $\tilde{\mathcal{O}}$. 
This induces a metric on ${\mathcal{O}}$ denoted by $d_{\orb}$.

Let $d_K$ denote the Hilbert metric on the interior $K^o$ of a properly convex domain $K$ in $\bR P^n$ or $\SI^n$. 
Suppose that a projective automorphism $g$ acts on $K$. 
Let $\leng_K(g)$ denote the infinum of $\{ d_K(x, g(x))| x \in K^o\}$. 

Suppose that a group $\Gamma$ of projective automorphisms of $K$ 
acts on $K^{o}$. 
Then we note that there exists some constant $C> 1$ so that 
\[ \leng_{K}(g) \leq C \cwl(g) \hbox{ for } g \in \Gamma.\]
If $\Gamma$ acts on $K^{o}$ properly discontinuously and cocompactly, 
then we have
\[C^{-1}\cwl(g) \leq  \leng_{K}(g) \leq C \cwl(g) \hbox{ for } g \in \Gamma\]
for a constant $C> 1$. 

\subsection{The uniform middle eigenvalue condition for properly convex R-ends and the lens-shaped condition. }

The following definition applies to properly convex R-ends. 
A {\em middle eigenvalue condition} for $\bGamma_{\tilde E}$ is the condition 
$\lambda_{1}(g) > \lambda_{\bv_{\tilde E}}(g)$ for the largest norm $\lambda_{1}(g)$ of the eigenvalues of 
each $g \in \bGamma_{\tilde E}$. 
The condition is a {\em weak middle eigenvalue condition} if we only require that 
if $\lambda_{\bv_{\tilde E}}(g)$ is the largest norm of the eigenvalues, then its multiplicity is $\geq 2$. 

\begin{definition}\label{defn-umec}
Let $\bv_{\tilde E}$ be a p-end vertex of a properly convex p-R-end $\tilde E$. 
Let $\tilde \Sigma_{\tilde E} \subset \SI^{n-1}_{\tilde E}$ denote the universal cover of the end orbifold 
corresponding to $\tilde E$. 
Suppose that the p-end fundamental group $\Gamma_{\tilde E}$ is admissible.
The p-end fundamental group $\bGamma_{\tilde E}$ satisfies the {\em uniform {middle eigenvalue} condition} 
\begin{itemize}
\item if each $g\in \bGamma_{\tilde E}$ satisfies for a uniform  $C> 1$ independent of $g$
\begin{equation}\label{eqn-umec}
C^{-1} \leng_{\tilde \Sigma_{\tilde E}}(g) \leq \log\left(\frac{\bar\lambda(g)}{\lambda_{\bv_{\tilde E}}(g)}\right) \leq 
C \leng_{\tilde \Sigma_{\tilde E}}(g) , 
\end{equation}
for $\bar \lambda(g)$ equal to 
the largest norm of the eigenvalues of $g$
and the eigenvalue $\lambda_{\bv_{\tilde E}}(g)$ of $g$ at $\bv_{\tilde E}$.
\end{itemize}



We say that $\bGamma_{\tilde E}$ satisfies the {\em weakly uniform middle-eigenvalue conditions} if 
we replace the above condition by the following: 
\begin{itemize} 
\item If $\lambda_{\bv_{\tilde E}}(g)$, $g \in \bGamma_{\tilde E}$ has the largest norm among 
eigenvalues, then the norm has to be of multiplicity $\geq 2$, 
\item the uniform middle eigenvalue condition for each hyperbolic factor $\bGamma_i$, i.e., the condition
{\eqref{eqn-umec}}. 
\end{itemize}

\end{definition}
The definition of course applies to the case when $\bGamma_{\tilde E}$ has the finite index subgroup with the above properties.

We can summarize the middle eigenvalue conditions (MEC) as: 
\begin{equation}
\hbox{weak MEC} \preceq  
\left\{
\begin{array}{c}
   \hbox{MEC}   \\
  \hbox{weak uniform MEC} 
\end{array}
\right\}
\preceq \hbox{uniform MEC}.
\end{equation}
Here $\preceq$ indicates the strength of the conditions. 

We give a dual definition: 
\begin{definition} 
Suppose that $\tilde E$ is a properly convex p-T-end. 
{Let $g^{\ast}:\bR^{n+1 \ast} \ra \bR^{n+1 \ast}$ be the dual transformation of $g: \bR^{n+1} \ra \bR^{n+1}$. }
Then each element {$g^{\ast}$} of the dual group $\bGamma_{\tilde E}^{\ast}$ fixes a point $\bv^{\ast}_{\tilde E} \in \bR P^{n \ast}$
corresponding to hyperspace containing 
$\tilde \Sigma_{\tilde E}$ with the eigenvalue to be denoted {$\lambda_{\bv_{\tilde E}^{\ast}}(g^{\ast})$}.
The p-end fundamental group $\bGamma_{\tilde E}$ satisfies the {\em uniform middle-eigenvalue condition}
if it satisfies {
\begin{equation}\label{eqn-umecD}
C^{-1} \leng_{\tilde \Sigma_{\tilde E}}(g) \leq \log\left(\frac{\bar\lambda(g)}{\lambda_{\bv_{\tilde E}^{\ast}}(g^{\ast})}\right) \leq 
C \leng_{\tilde \Sigma_{\tilde E}}(g)  
\end{equation}  }
for $\bar \lambda(g)$ equal to 
the largest norm of the eigenvalues of $g$.
\end{definition} 
Again the middle eigenvalue condition and the associated conditions 
is as follows: 
for each $g\in \pi_{1}(\tilde E) -\{\Idd\}$, 
the largest norm $\lambda_{1}(g)$ of eigenvalues of $g$ is strictly larger than 
the eigenvalue $\lambda_{\bv_{\tilde E}^{\ast}}(g)$. 
The end fundamental group $\bGamma_{\tilde E}$ will act on a properly convex domain $K^o$ of lower-dimension
and we will apply the definition here. 
This condition is similar to ones studied by Guichard and Wienhard \cite{GW}, and the results also 
seem similar. Our main tools to understand these questions are in Appendix A in \cite{End2}, and 
the author does not really know the precise relationship here.) 



The condition is an open condition; and hence a ``structurally stable one."
(See \cite{End2}.)
Our main result to be proved in \cite{End2} (see also \cite{endclass}) is the following: 
\begin{theorem}\label{thm-secondmain} 
Let $\mathcal{O}$ be a strongly tame properly convex real projective orbifold. 
Assume that the holonomy group of $\mathcal{O}$ is strongly irreducible.
The p-end fundamental group $\pi_{1}(\tilde E)$ of a p-end $\tilde E$ 
acts admissibly on on {$\clo(\tilde \Sigma_{\tilde E}) = K_{1}\ast \cdots \ast K_{s}$}
so that $K_{i}$ are strictly convex and $K_{i}^{o}/\Gamma_{i}$ is compact Hausdorff for each $i$ 
where $\Gamma_{i}$ is the restriction to $K_{i}$ of $\pi_{1}(\tilde E)$
and $\pi_{1}(\tilde E)$ is virtually isomorphic to \[\bZ^{s-1}\times \Gamma_{1}\times \cdots \times \Gamma_{s}.\] 
\begin{itemize} 
\item Let $\tilde E$ be a properly convex  p-R-end. 
\begin{itemize} 
\item Suppose that the p-end holonomy group of $\tilde E$ satisfies the uniform middle-eigenvalue condition.
Then $\tilde E$ is generalized-lens-shaped.
\item Suppose that the p-end holonomy group of $\tilde E$ satisfies the weakly uniform middle-eigenvalue condition.
Then $\tilde E$ is generalized-lens-shaped  or quasi-lens-shaped .
\end{itemize} 
\item 
If $\tilde E$ is virtually factorable or is a totally geodesic R-end, 
then we can replace the word ``generalized-lens-shaped''
to ``lens-shaped'' in each of the above statements. 
\end{itemize} 
\end{theorem}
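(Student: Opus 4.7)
The plan is to combine the Benoist decomposition of $\clo(\tilde\Sigma_{\tilde E})$ with a duality argument to produce a $\pi_1(\tilde E)$-invariant properly convex compact set $L^{\#}\subset\torb$ that is distanced from both $\bv_{\tilde E}$ and its antipode, and then to thicken $L^{\#}$ radially about $\bv_{\tilde E}$ to obtain the lens. As setup I would first invoke the admissibility hypothesis and Proposition \ref{prop-Ben2} to pass to a finite index subgroup where $\pi_1(\tilde E)\cong\bZ^{s-1}\times\Gamma_1\times\cdots\times\Gamma_s$ and each $\Gamma_i$ acts cocompactly and strongly irreducibly on the strictly convex factor $K_i$. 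Choosing coordinates adapted to the join $K_1\ast\cdots\ast K_s$ and to $\bv_{\tilde E}$, every element of $\bGamma_{\tilde E}$ acquires a block structure relative to which the eigenvalue $\lambda_{\bv_{\tilde E}}(g)$ and the largest norm $\bar\lambda(g)$ can be read off from the action on $\tilde\Sigma_{\tilde E}$.

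The core dynamical input is then extracted from Section \ref{sec-duality}: the tubular action of $\bGamma_{\tilde E}$ on the open cone $\{\bv_{\tilde E}\}\ast\tilde\Sigma_{\tilde E}$ is dual to an asymptotically nice action of $\bGamma_{\tilde E}^{\ast}$ on the dual convex domain, and the uniform middle eigenvalue condition \eqref{eqn-umec} says precisely that $\log(\bar\lambda(g)/\lambda_{\bv_{\tilde E}}(g))$ is comparable to $\leng_{\tilde\Sigma_{\tilde E}}(g)$. Applying this uniformly across the hyperbolic factors $\Gamma_i$, together with the cocompactness of each $\Gamma_i$ on $K_i^o$, I would take orbit closures of generic interior points of $\torb$ and form their convex hulls in $\torb$, producing a $\bGamma_{\tilde E}$-invariant properly convex compact $L^{\#}$ whose closure misses $\bv_{\tilde E}$ and $\bv_{\tilde E-}$. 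Proper convexity of the convex hull is ensured by the eigenvalue gap, while the distanced property follows because the Hilbert displacement is bounded below on the orbit by the comparison with the logarithm of the eigenvalue ratio.

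With $L^{\#}$ distanced, the generalized lens is constructed as follows. The union of radial segments from $\bv_{\tilde E}$ passing through $L^{\#}$ gives a $\bGamma_{\tilde E}$-invariant cone $C$, and one verifies using transversality of the radial foliation to $L^{\#}$ (ensured by the eigenvalue gap) that the lower face $\partial_-L$ built from $L^{\#}$ is smooth and strictly convex, while the upper face $\partial_+L$ is strictly convex but possibly nonsmooth at directions where the join structure of $K_1\ast\cdots\ast K_s$ produces corners -- this is exactly the ``generalized'' qualifier. Radial lines exit $C$ transversally through $\partial_+L$ by Lemma \ref{lem-simplexbd} and strong irreducibility, which rules out boundary segments of $\partial_+L$ sliding into $\Bd\torb$. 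For the weakly uniform version of the middle eigenvalue condition, the same construction still goes through except in the degenerate regime where for some $g$ the eigenvalue $\lambda_{\bv_{\tilde E}}(g)$ attains the maximal norm with multiplicity exactly two; a limiting argument in the Hausdorff metric of Definition \ref{defn-Haus} then shows the distanced set collapses onto a strictly convex smooth hypersurface with $\bv_{\tilde E}$ in its closure, which is precisely a quasi-lens cone.

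For the last clause, the upgrade from ``generalized-lens-shaped'' to ``lens-shaped'' in the virtually factorable case follows because the central $\bZ^{s-1}$-factor acts by diagonal projective automorphisms that average any putative corner of $\partial_+L$ into a smooth strictly convex hypersurface; in the totally geodesic R-end case, smoothness of $\partial_+L$ comes from the existence of the invariant totally geodesic hyperplane together with Proposition \ref{prop-lensend}. The principal obstacle in this plan is the construction of the distanced set $L^{\#}$ when the factorization is nontrivial: one must simultaneously control the dynamics along each hyperbolic factor and along the $\bZ^{s-1}$-direction, and in particular rule out that orbit closures accumulate on $\bv_{\tilde E}$ or $\bv_{\tilde E-}$ along the join directions. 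A secondary delicate point is the clean separation between the generalized-lens and quasi-lens alternatives in the weakly uniform regime, which requires a careful analysis of which block of the holonomy realizes the maximal eigenvalue and a limiting construction controlled in the spherical Hausdorff topology.
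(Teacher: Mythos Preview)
The paper does not contain a proof of Theorem~\ref{thm-secondmain}. This is a survey article, and the sentence immediately preceding the theorem reads ``Our main result to be proved in \cite{End2} (see also \cite{endclass}) is the following,'' while the section opens with ``We will just state results from \cite{End2} without proofs.'' So there is no proof here against which to compare your attempt in any detail.

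That said, the paper's outline (Section~1.4 and the opening of Section~\ref{sec-endth}) does sketch the intended strategy for \cite{End2}: one studies tubular actions and their duals (affine actions), shows that distanced tubular actions correspond under duality to asymptotically nice affine actions, and proves that the uniform middle eigenvalue condition forces the tubular action to be distanced. Your proposal is broadly in this spirit --- you aim to produce a $\bGamma_{\tilde E}$-invariant distanced set and build the lens from it --- but several of your citations to this paper are misplaced. Section~\ref{sec-duality} here contains only the basic Vinberg duality of convex domains and the augmented-boundary map; it says nothing about tubular versus asymptotically nice actions (those are in \cite{End2}). Lemma~\ref{lem-simplexbd} is a boundary dichotomy for convex subsets of $\clo(\torb)$ and does not yield the transversality you claim. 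Proposition~\ref{prop-lensend} has restrictive hypotheses (generation by finite-order elements, or eigenvalue $1$ at the vertex) that are not available in the general totally geodesic case you invoke it for. Finally, your ``averaging by the $\bZ^{s-1}$-factor smooths corners'' argument for the virtually factorable upgrade is not a mechanism that appears anywhere in this paper and would need substantial justification.

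In short: your outline is thematically consistent with the author's announced plan, but the actual argument lives in \cite{End2}, and the specific lemmas you cite from the present survey do not do the work you assign them.
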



\begin{remark}[Duality of ends] \label{rem-duality} 
Above orbifold $\orb=\torb/\Gamma$ has a diffeomorphic dual orbifold $\orb^{\ast}$, 
where $\orb^{\ast}$ is defined as the quotient of the dual domain 
$\torb^{\ast}$ by the dual group $\Gamma^{\ast}$ of $\Gamma$ by Theorem \ref{thm-dualdiff}.
By Theorem \ref{thm-dualdiff}, each end neighborhood of a properly convex 
strongly tame open orbifold $\orb$ goes to an end neighborhood of the dual orbifold $\orb^{\ast}$. 
The ends of $\orb$ and $\orb^{\ast}$ are in a one-to-one correspondence. 
Horospherical ends are dual to themselves, i.e., ``self-dual'', 
and properly convex R-ends and T-ends are dual to one another. (See \cite{End2}.)
We will see that properly convex generalized-lens-shaped R-ends
are always dual to lens-shaped T-ends by \cite{End2}. 
\end{remark}


\begin{theorem}\label{thm-equ2}
Let $\orb$ be a strongly tame properly convex real projective orbifold.
Assume that the holonomy group is strongly irreducible.
Assume that the universal cover $\torb$ is a subset of $\SI^n$ {\rm (}resp. $\bR P^n${\rm ).}
Let $\tilde S_{\tilde E}$ be a totally geodesic ideal boundary of a p-T-end $\tilde E$ of $\torb$. 
Let $\bGamma_{\tilde E}$ act admissibly on $\tilde S_{\tilde E}$. 
Then the following conditions are equivalent\,{\rm :} 
\begin{itemize} 
\item[(i)] $\tilde E$ satisfies the uniform middle-eigenvalue condition.
\item[(ii)] $\tilde S_{\tilde E}$ has a lens-neighborhood in an ambient open manifold containing $\torb$.
\end{itemize} 
\end{theorem}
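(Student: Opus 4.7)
The plan is to prove Theorem \ref{thm-equ2} by duality, reducing both implications to the already-stated Theorem \ref{thm-secondmain} applied to the dual p-R-end. By Remark \ref{rem-duality} together with the results of Section \ref{sec-duality}, the p-T-end $\tilde E$ of $\torb$ determines a dual p-R-end $\tilde E^{\ast}$ of the dual orbifold, whose p-end vertex $\bv_{\tilde E^{\ast}} \in \bR P^{n \ast}$ (resp. $\SI^{n \ast}$) is the point dual to the hyperplane $P$ containing $\tilde S_{\tilde E}$. I would first verify that the admissibility of $\bGamma_{\tilde E}$ on $\tilde S_{\tilde E}$ transfers to admissibility of the dual action on $\tilde \Sigma_{\tilde E^{\ast}}$: the strict-join decomposition $\clo(\tilde S_{\tilde E}) = K_{1} \ast \cdots \ast K_{s}$ dualizes via equation \eqref{eqn-dualjoin} to $\clo(\tilde \Sigma_{\tilde E^{\ast}}) = K_{1}^{\ast} \ast \cdots \ast K_{s}^{\ast}$, with each hyperbolic factor $\Gamma_{i}$ mapping isomorphically (via $g \mapsto (g^{-1})^{T}$) to its image in the automorphism group of the dual factor and still acting cocompactly there.

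Next I would check that the uniform middle-eigenvalue condition for the T-end $\tilde E$ is equivalent to the uniform middle-eigenvalue condition for $\tilde E^{\ast}$ in the sense of Definition \ref{defn-umec}. Two observations are needed: first, since the eigenvalues of $g^{\ast}=(g^{-1})^{T}$ are reciprocals of those of $g$, the ratio $\bar\lambda(g)/\lambda_{\bv_{\tilde E}^{\ast}}(g)$ appearing in the T-end condition matches the corresponding ratio $\bar\lambda(g^{\ast})/\lambda_{\bv_{\tilde E^{\ast}}}(g^{\ast})$ for the dual element at the dual vertex; second, the Hilbert translation length $\leng_{\tilde \Sigma_{\tilde E}}(g)$ computed on the totally geodesic ideal boundary agrees (up to a uniform multiplicative constant) with $\leng_{\tilde \Sigma_{\tilde E^{\ast}}}(g^{\ast})$, because the duality map of Proposition \ref{prop-duality} identifies the two group actions on mutually dual properly convex domains, where Benoist--Hilbert length is an invariant of the conjugacy class of the restricted action.

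With this correspondence in place, $(i) \Rightarrow (ii)$ follows by applying Theorem \ref{thm-secondmain} to $\tilde E^{\ast}$: the uniform middle-eigenvalue condition yields a lens-cone neighborhood $\{\bv_{\tilde E^{\ast}}\} \ast L^{\ast}$ of the dual R-end (``generalized-lens'' is upgraded to ``lens'' by the last bullet of Theorem \ref{thm-secondmain} since $\tilde E^{\ast}$ dualizes a T-end and the virtually factorable or totally geodesic case applies). Dualizing $L^{\ast}$ via $\mathcal{D}_{\Omega}$ produces a convex neighborhood of $\tilde S_{\tilde E}$ in an ambient open manifold containing $\torb$, bounded by two smoothly strictly convex hypersurfaces on opposite sides of $P$, which is the required lens neighborhood. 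Conversely, for $(ii) \Rightarrow (i)$, a lens neighborhood of $\tilde S_{\tilde E}$ dualizes to a lens-cone neighborhood of $\bv_{\tilde E^{\ast}}$, and the converse direction of Theorem \ref{thm-secondmain} (proved in \cite{End2}) gives the uniform middle-eigenvalue condition for $\tilde E^{\ast}$, which transfers back to $\tilde E$ by the eigenvalue correspondence.

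The main obstacle will be the careful verification of the duality between lens structures on the R-end side and lens structures on the T-end side. Concretely, one must show that the two smoothly strictly convex boundary components $\partial_{\pm} L^{\ast}$ of a lens around a vertex dualize to two smoothly strictly convex hypersurfaces on opposite sides of $P$ whose convex hull is a lens thickening of $\tilde S_{\tilde E}$, and that cocompactness of the $\bGamma^{\ast}_{\tilde E}$-action on $L^{\ast}$ transfers to cocompactness of the $\bGamma_{\tilde E}$-action on the dual lens. The augmented-boundary formalism and the duality map $\mathcal{D}_{\Omega}: \Bd^{\Ag}\Omega \to \Bd^{\Ag}\Omega^{\ast}$ of Proposition \ref{prop-duality} give the right framework, but translating $C^{2}$-strict convexity through the duality at the level of entire hypersurfaces (rather than merely boundary points with supporting hyperplanes) requires a local analysis relating second fundamental forms of dual strictly convex hypersurfaces, which is where the technical bulk of the argument in \cite{End2} is expected to reside.
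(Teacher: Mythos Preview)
The paper does not actually prove Theorem \ref{thm-equ2}: as announced at the opening of Section \ref{sec-endth}, the results there are ``just state[d] from \cite{End2} without proofs.'' So there is no proof in the present paper to compare your proposal against. That said, Remark \ref{rem-duality} strongly indicates that duality is indeed the intended mechanism linking the T-end statement to the R-end Theorem \ref{thm-secondmain}, so your overall strategy is aligned with what the paper gestures toward.

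Two technical points in your proposal need correction. First, your eigenvalue-matching claim is not literally true: the eigenvalues of $g^{\ast} = (g^{-1})^{T}$ are the reciprocals of those of $g$, so
\[
\frac{\bar\lambda(g^{\ast})}{\lambda_{\bv_{\tilde E^{\ast}}}(g^{\ast})} \;=\; \frac{\lambda_{\bv_{\tilde E}^{\ast}}(g)}{\lambda_{n+1}(g)},
\]
which is the ratio appearing in the T-end condition applied to $g^{-1}$, not to $g$. The two uniform middle-eigenvalue conditions are still equivalent, because both are quantified over the whole group and $\leng(g) = \leng(g^{-1})$; but the identification goes through $g \leftrightarrow g^{-1}$, not $g \leftrightarrow g$. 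Second, your justification for dropping ``generalized'' from ``generalized-lens'' is off: the last bullet of Theorem \ref{thm-secondmain} applies when the R-end $\tilde E^{\ast}$ is itself virtually factorable or a totally geodesic R-end, and being dual to a T-end does not a priori make $\tilde E^{\ast}$ a totally geodesic R-end. The route indicated by Remark \ref{rem-duality} is rather that generalized-lens-shaped R-ends dualize directly to lens-shaped T-ends --- the asymmetry in the smoothness requirement is absorbed by the duality itself, not by an upgrade on the R-end side before dualizing.

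Finally, note that Theorem \ref{thm-secondmain} as stated here is one-directional (uniform MEC implies generalized-lens), so for $(ii)\Rightarrow(i)$ you are invoking a converse that is not recorded in this paper. You correctly attribute it to \cite{End2}, but be aware that it is an additional input, not a corollary of anything stated in the present survey.
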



\begin{remark}[Virtually factorable ends are self-dual in a generalized sense] \label{rem-red} 
Let $\orb$ be as above, and $\orb^{\ast}$ be the dual orbifold. 
A generalized-lens-shaped virtually factorable properly convex R-end of $\orb$ is always 
totally geodesic by a result in \cite{End2}. 
By Theorem \ref{thm-secondmain}, the R-end is lens-shaped always. 
The dual end of $\orb^{\ast}$ is totally geodesic and lens-shaped since it satisfies the uniform middle eigenvalue condition. 
Since the holonomy group of the corresponding p-end $\tilde E^{\ast}$ fixes a unique point dual 
to the totally geodesic convex domain in a p-end neighborhood of $\tilde E$ of $\torb$. 
The p-end can be made into a totally geodesic p-R-end  by taking a cone over that point. 
Thus, the virtually factorable properly convex ends are ``self-dual''. {We} consider these the best types of cases.
(See Section \ref{sec-duality}.) 

\end{remark}






 \subsection{The characterization of quasi-lens p-R-end \\ neighborhoods} \label{sub-quai-lens} 

This is the last remaining case for the properly convex ends with weakly uniform middle eigenvalue conditions. 
We will only prove for $\SI^n$. 

\begin{definition}\label{defn-quasilens}
\begin{itemize}
\item Let $D$ be the properly convex 
totally geodesic $(n-2)$-dimensional domain so that 
\[U = D \ast \bv \subset \SI^{n-1} \subset \SI^{n}.\] 
Let $G$ be a projective automorphism group of $\SI^{n-1}$. 
$G$ acts on $D$ and $\bv$ as  the p-end fundamental group for $D\ast \bv$ with p-end vertex $\bv$
satisfying the weakly uniform middle eigenvalue condition. 
\item Let $\SI^1$ be a great circle in $\SI^{n}$ meeting $\SI^{n-1}$ at $\bv$. 
\item Extend $G$ to act on $\SI^1$ as a nondiagonalizable transformation fixing $\bv$. 
\item Let $\zeta$ be a projective automorphism of $\SI^{n}$
acting on $U$ and $\SI^1$ so that $\zeta$ commutes with $G$ and restrict to a diagonalizable transformation on $\clo(D)$
and act as a nondiagonalizable transformation on $\SI^1$ fixing $\bv$ and with largest norm eigenvalue at $\bv$.
\end{itemize}
Every element $g$ of $\langle G, \zeta \rangle$ can be written as a matrix
\begin{equation}
\left( \begin{array}{c|c}
S(g) & 
\begin{array}{cc} 
0 \quad & \quad 0 \end{array} 
 \\
\hline
\begin{array}{c} 
0 \\ 0 
\end{array} 
 &  \begin{array}{cc}
\lambda_{\bv}(g) & \lambda_{\bv}(g)v(g)\\
0 & \lambda_{\bv}(g) \end{array} 
\end{array}\right) \label{eqn:qj}
\end{equation} 
where $\bv =[0, \dots, 1]$. 
Note that $g \mapsto v(g) \in \bR$ is a well-defined map inducing a homomorphism 
\[ \langle G, \zeta \rangle \ra H_1(\bGamma_{\tilde E}) \ra \bR\] 
and hence 
\[ |v(g)| \leq C \cwl(g)\] for a positive constant $C$. 


For $g \in \langle G, \zeta \rangle $, 
let $\lambda_{2}(g)$ denote the largest norm of the eigenvalue associated with $\clo(D)$.
\begin{description}
\item[Positive translation condition] We choose an affine coordinate on a component $I$ of $\SI^1 -\{\bv, \bv_-\}$.
We assume that for each $g \in \langle G, \zeta \rangle  $,
if $\lambda_{\bv}(g) > \lambda_2(g)$ 
then $v(g) > 0$ in equation \eqref{eqn:qj}, and 
\[ \frac{v(g)}{\log \frac{\lambda_{\bv}(g)}{\lambda_2(g)}} > c_1 > 0 \]
for a constant $c_1$. 
\end{description}
\end{definition}


\begin{proposition}\label{prop-quasilens1} 
Suppose that $\langle G, \zeta \rangle$ satisfies the positive translation condition
and is admissible.
Then  the above $U$ is in the boundary of a properly convex p-end open neighborhood $V$ of $\bv$ 
and $\langle G, \zeta \rangle$ acts on $V$ properly. 
\end{proposition}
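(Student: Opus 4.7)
The plan is to build an invariant properly convex open cone $\hat V \subset \bR^{n+1}$ whose projectivization $V$ has $\bv$ as a p-end vertex and $U$ in its boundary, and then to verify properness of the action directly from the explicit description.

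\emph{Setup.} Fix coordinates adapted to \eqref{eqn:qj}: write $\bv = [e_{n+1}]$, let $e_n$ carry the generalized eigenvector direction for the Jordan block, and identify $\mathrm{span}(e_1,\dots,e_{n-1})$ with the subspace whose projectivization contains $\clo(D)$. Let $\hat D\subset \bR^{n-1}$ be the open properly convex cone lifting $D^o$; by admissibility, $\langle G,\zeta\rangle$ acts on $\hat D$ via $g\mapsto S(g)$, inducing a cocompact action on $D^o$. Writing a point of $\bR^{n+1}$ as $(y,u,t)$ with $y\in\bR^{n-1}$, the linear action reads
\[
g\cdot(y,u,t) \;=\; \bigl(S(g)y,\ \lambda_\bv(g)u,\ \lambda_\bv(g)v(g)u + \lambda_\bv(g)t\bigr).
\]

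\emph{Construction.} Set
\[
\hat V \;:=\; \{(y,u,t) : y\in\hat D,\ u>0,\ t > \phi(y,u)\},
\]
where $\phi$ is homogeneous of degree one in $(y,u)$ and satisfies the equivariance
\[
\phi(S(g)y,\lambda_\bv(g)u) \;=\; \lambda_\bv(g)\phi(y,u) + \lambda_\bv(g)v(g)u
\]
for all $g$. Writing $\phi(y,u)=uf(y/u)$ and noting that the projectivized action on $\bar y:=y/u\in D^o$ is the standard action of $g$, the condition reduces to constructing a strictly convex function $f: D^o \to \bR$ with $f(g\bar y) - f(\bar y) = v(g)$. Since $v:\langle G,\zeta\rangle\to\bR$ is a nontrivial homomorphism read off from \eqref{eqn:qj} and since a finite-index torsion-free subgroup acts freely and cocompactly on $D^o$, one fixes a base point $\bar y_0\in D^o$, sets $f(g\bar y_0):=v(g)$ on the orbit (well-defined modulo passing to finite index), and extends to $D^o$ by a $G$-equivariant convex interpolation over a fundamental domain. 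The positive translation inequality $v(g) > c_1\log(\lambda_\bv(g)/\lambda_2(g))$ (valid whenever $\lambda_\bv(g)>\lambda_2(g)$) controls the growth of $v$ precisely enough to force strict convexity of $f$ along the translation axes of the hyperbolic factors of $G$, and hence strict convexity of $f$ on all of $D^o$.

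\emph{Verification.} Invariance of $\hat V$ under $\langle G,\zeta\rangle$ is immediate from the equivariance of $\phi$, as direct substitution preserves the inequality $t>\phi(y,u)$. Proper convexity of $\hat V$ follows from proper convexity of $\hat D$ together with strict convexity of $\phi$: a complete affine line in $\hat V$ would project either to a line in $\hat D\cup\{u=0\}$ (ruled out by proper convexity of $\hat D$) or would force $\phi$ to be affine along a direction on which it was arranged strictly convex. The closure $\clo(\hat V)$ meets $\{u=0\}$ precisely in $\clo(\hat D)\oplus\mathrm{span}(e_{n+1})$, whose projectivization is $\clo(U)$; hence $U\subset\Bd V$ and $\bv\in\Bd V$. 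For properness of the action, if $g_i(K)\cap K\neq\emptyset$ for a compact $K\subset V$ and infinitely many $g_i$, then cocompactness of $G$ on $D^o$ confines the projective parts of $S(g_i)$ to a compact family, and the positive translation condition then confines $v(g_i)$ to a bounded range, contradicting discreteness of $\langle G,\zeta\rangle$.

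\emph{Main obstacle.} The key difficulty is producing $f$ as a strictly convex primitive of the cocycle $v$ on the properly convex $D^o$. The positive translation condition supplies exactly the quantitative input needed, but converting it into an honest strictly convex function whose epigraph has no unintended boundary segments requires a delicate equivariant interpolation over a fundamental domain together with a boundary regularity check on $\Bd\hat D$, so that the facial structure of $\Bd V$ consists of precisely $U$ together with the tangent face at $\bv$.
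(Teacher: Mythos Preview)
The paper does not prove this proposition here; Section~\ref{sec-endth} opens by announcing that the results of that section are stated without proof, deferring to \cite{End2}. So there is no in-paper argument to compare against, and I assess your attempt on its own.

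Your epigraph strategy is reasonable in outline, and the equivariance condition you derive for $\phi$ is correct. The genuine gap is precisely where you place it yourself: you never construct the convex primitive $f$. Writing ``set $f(g\bar y_0):=v(g)$ on one orbit and extend by a $G$-equivariant convex interpolation over a fundamental domain'' is not an argument. There is no general principle that a real-valued homomorphism on a group acting projectively and cocompactly on a properly convex domain is the coboundary of a \emph{convex} function, and you supply no mechanism by which the positive translation inequality $v(g)/\log(\lambda_{\bv}(g)/\lambda_2(g))>c_1$ produces convexity of your interpolation. That inequality is the entire content of the hypothesis, yet your proof never uses it beyond invoking its name. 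You also do not control the boundary behaviour of $f$ as $\bar y$ tends to $\Bd\hat D$, which is what governs whether $\clo(\hat V)$ meets $\{u=0\}$ in exactly the cone over $U$; a convex $f$ with the wrong growth rate there would put extra faces on $\Bd V$ or miss $U$ entirely. Finally, there is an ambiguity in your setup between $f$ as a function on the cone $\hat D$ and $f$ as a function on the projective domain $D^o$: if $f$ is degree-zero homogeneous (a function on $D^o$), then $\phi(y,u)=u\,f([y])$ is not the perspective transform of a convex function in the usual sense, and its joint convexity in $(y,u)$ does not follow from affine convexity of $f$ on a chart.

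Your properness argument is also incomplete. Projecting $V\to D^o$ is equivariant, but the $\langle G,\zeta\rangle$-action on $D^o$ need not be properly discontinuous: $\zeta$ is only assumed diagonalizable on $\clo(D)$ and commuting with $G$, so its restriction to $D$ could lie in the (possibly positive-dimensional) centre of $\Aut(K)$ and act with noncompact powers without moving points of $D^o$ far in the Hilbert metric. Thus ``cocompactness on $D^o$ confines $S(g_i)$'' does not bound the $\zeta$-exponent of $g_i$; one still needs the $t$-coordinate together with the positive translation condition to rule out an escaping sequence, which is again the step you have not made precise.
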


We call the end of type constructed as above a {\em quasi-lens-shaped}  end. 
This generalizes the quasi-hyperbolic annulus discussed in \cite{cdcr2}. 



From this, we obtain:


\begin{theorem} \label{thm-quasilens2} 
Let $\orb$ be a strongly tame properly convex real projective orbifold.
Suppose that $\pi_1(\orb)$ is strongly irreducible. 
Let $\tilde E$ be a properly convex p-R-end with the admissible end fundamental group 
satisfying the weakly uniform middle eigenvalue conditions
but not the uniform middle eigenvalue condition.
Then $\tilde E$ has a quasi-lens-shaped p-R-end neighborhood. 
\end{theorem}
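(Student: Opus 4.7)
The plan is to use the Benoist product structure for the admissible end fundamental group, to locate the Jordan block forced by the failure of the uniform middle eigenvalue condition, and then to invoke Proposition \ref{prop-quasilens1}. First, by Proposition \ref{prop-Ben2} applied to $\tilde\Sigma_{\tilde E}$, I write $\clo(\tilde\Sigma_{\tilde E}) = K_1 \ast \cdots \ast K_s$ with each $K_i$ strictly convex (by admissibility) and $\bGamma_{\tilde E}$ virtually isomorphic to $\bZ^{s-1}\times \Gamma_1\times\cdots\times\Gamma_s$, with each hyperbolic factor $\Gamma_i$ acting cocompactly on $K_i^o$. The weakly uniform middle eigenvalue condition gives the uniform middle eigenvalue condition on each hyperbolic factor, while the failure of the uniform middle eigenvalue condition for $\bGamma_{\tilde E}$ itself produces an element $g_0$ for which the largest eigenvalue-norm $\lambda_1(g_0)$ equals $\lambda_{\bv_{\tilde E}}(g_0)$, with multiplicity at least two by the weak uniform hypothesis.

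Next, I would argue that this forces a nontrivial Jordan block at $\bv_{\tilde E}$. If $g_0$ were diagonalizable on the eigenspace of $\lambda_{\bv_{\tilde E}}(g_0)$, then $g_0$ would fix pointwise a projective subspace of dimension $\geq 1$ through $\bv_{\tilde E}$ on which the eigenvalue is maximal; iterating $g_0^{n}$ on a generic point $y \in \torb$ together with orbits of an element of a hyperbolic factor would then produce an antipodal pair of accumulation points in $\clo(\torb)$, by an argument analogous to the one used in the proof of Theorem \ref{thm-affinehoro}, contradicting proper convexity of $\torb$. Hence the generalized eigenspace of $\lambda_{\bv_{\tilde E}}(g_0)$ contains $\bv_{\tilde E}$ inside a $2$-dimensional Jordan block, and projectively this spans the desired great circle $\SI^1$. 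Since $g_0$ lies in the center of a finite-index subgroup of $\bGamma_{\tilde E}$, the line $\SI^1$ and its complementary $(n-1)$-dimensional invariant subspace are preserved by all of $\bGamma_{\tilde E}$; inside the latter, the strictly convex factors $K_1,\ldots,K_{s-1}$ (on which the hyperbolic part of $\bGamma_{\tilde E}$ acts with the uniform middle eigenvalue condition) assemble into the properly convex totally geodesic $(n-2)$-dimensional domain $D$ of Definition \ref{defn-quasilens}, and every $g\in\bGamma_{\tilde E}$ takes the block-Jordan form \eqref{eqn:qj} in an adapted basis.

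The heart of the argument, and the main obstacle, is to verify the positive translation condition: whenever $\lambda_{\bv}(g) > \lambda_2(g)$ one needs $v(g)>0$ together with a uniform lower bound $v(g)/\log(\lambda_{\bv}(g)/\lambda_2(g)) > c_1$. Positivity follows from the invariance of $\clo(\torb)$: if $v(g)$ had the wrong sign for some such $g$, then iterating $g^{\pm 1}$ on an interior point of $\torb$ would accumulate on both of the two components of $\SI^1 \setminus \{\bv_{\tilde E}, \bv_{\tilde E-}\}$, giving a pair of antipodal limit points and violating proper convexity. For the uniform lower bound, the map $g \mapsto v(g)$ is a homomorphism to $\bR$, so $|v(g)| \leq C\,\cwl(g)$; on the other hand, the uniform middle eigenvalue condition for the hyperbolic factors together with cocompactness on $K_i^o$ yields $\log(\lambda_{\bv}(g)/\lambda_2(g)) \asymp \cwl(g)$ along the central $\bZ^{s-1}$ direction, and a matching lower bound for $v(g)$ in the direction where the inequality $\lambda_{\bv}(g) > \lambda_2(g)$ holds comes from the existence of some $\bGamma_{\tilde E}$-invariant properly convex open subset of $\torb$ near $\bv_{\tilde E}$, which forces $v(g)$ to grow at least linearly in $\cwl(g)$ on that cone in $\bGamma_{\tilde E}$.

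Finally, with the positive translation condition in hand, Proposition \ref{prop-quasilens1} directly produces a properly convex $\bGamma_{\tilde E}$-invariant p-end open neighborhood $V$ of $\bv_{\tilde E}$ containing $U = D\ast \bv_{\tilde E}$ in its boundary, and this is by definition a quasi-lens-shaped p-R-end neighborhood for $\tilde E$. I expect the hardest technical step to be the uniform lower bound on $v(g)/\log(\lambda_{\bv}(g)/\lambda_2(g))$, which must be obtained simultaneously over infinitely many conjugacy classes and requires ruling out sequences $g_n\in\bGamma_{\tilde E}$ along which $\log(\lambda_{\bv}(g_n)/\lambda_2(g_n))\to\infty$ while $v(g_n)$ grows sublinearly; strong irreducibility of the ambient holonomy enters here to prevent degenerations into invariant subspaces that would collapse the quasi-lens geometry.
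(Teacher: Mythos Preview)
The paper does not actually prove Theorem~\ref{thm-quasilens2}; it is announced without proof as a result from the companion paper~\cite{End2} (see the opening of Section~\ref{sec-endth}: ``We will just state results from~\cite{End2} without proofs''). So there is no argument here to compare your proposal against.

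That said, your outline has the right architecture---Benoist decomposition, isolate a central element carrying a Jordan block, verify the positive translation condition, invoke Proposition~\ref{prop-quasilens1}---but several steps are genuine gaps rather than routine details. First, failure of the uniform middle-eigenvalue condition does not immediately hand you an element $g_0$ with the \emph{equality} $\lambda_1(g_0)=\lambda_{\bv_{\tilde E}}(g_0)$; a priori you only get a sequence along which $\log(\bar\lambda(g)/\lambda_{\bv_{\tilde E}}(g))/\leng(g)\to 0$. You must use the virtual product structure $\bZ^{s-1}\times\Gamma_1\times\cdots\times\Gamma_s$ together with the fact that the uniform condition \emph{does} hold on each hyperbolic factor to force the defect into the central $\bZ^{s-1}$, and then argue from the finite generation of that lattice that equality is actually attained on some generator. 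Second, your proper-convexity argument for the Jordan block (``would produce an antipodal pair'') is not convincing as written: a diagonalizable maximal eigenvalue of multiplicity $\geq 2$ gives a fixed projective segment through $\bv_{\tilde E}$, and iterates of $g_0$ accumulate on that segment, not on antipodal points; you need a different mechanism (for instance, strong irreducibility of $\pi_1(\orb)$ ruling out the extra invariant subspace in $\Bd\torb$). Third, you have not identified \emph{which} factor $K_i$ is the $0$-dimensional vertex that produces the great circle $\SI^1$ of Definition~\ref{defn-quasilens}; the structure there requires $\clo(\tilde\Sigma_{\tilde E})=D\ast\{k\}$ with $k$ a single point, and this must be extracted from the eigenvalue analysis, not assumed. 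Finally, as you yourself flag, the uniform lower bound in the positive translation condition is the crux and your sketch does not yet supply it: the inequality $|v(g)|\leq C\,\cwl(g)$ goes the wrong way, and the claimed linear lower bound for $v(g)$ from ``the existence of some $\bGamma_{\tilde E}$-invariant properly convex open subset'' is exactly what Proposition~\ref{prop-quasilens1} is meant to \emph{produce}, so invoking it here is circular.
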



 \section{The NPNC-ends} \label{sec-notprop}

We will now study the ends where the transverse real projective structures are 
\begin{itemize} 
\item not properly convex but 
\item not projectively diffeomorphic to a complete affine subspace.
\end{itemize} 
First, these ends have transverse end orbifolds that are foliated by complete affine leaves,
and the associated exact sequence of the p-end fundamental groups. 
We will basically show how to split this group. 
We explain the eigenvalue result following from our {transverse weak middle eigenvalue} condition for NPNC-ends.
Then we introduce a hypothesis and derive the splitting. 
We explain joins and quasi-joins and conditions for them. 
We state the classification of NPNC-ends in \cite{End3}. 
Finally, we do the remaining case of the complete affine R-ends that were left from above. 

\subsection{The structure of the NPNC-ends} 
Let $\tilde E$ be a p-R-end of $\orb$ and let $U$ be the corresponding p-end neighborhood in $\torb$
with the p-end vertex $\bv_{\tilde E}$. 
The closure $\clo(\tilde \Sigma_{\tilde E}) \subset \SI^{n-1}_{\bv_{\tilde E}}$ contains 
a great $(i_0-1)$-dimensional sphere and $\tilde \Sigma_{\tilde E}$ is foliated by $i_0$-dimensional 
open hemispheres, i.e, complete affine spaces,  
with this boundary by Proposition \ref{prop-projconv}.
Let $\SI^{i_0-1}_\infty$ denote the great $(i_0-1)$-dimensional sphere 
$\SI^{n-1}_{\bv_{\tilde E}}$ in $\Bd \tilde \Sigma_{\tilde E}$
which is a boundary of complete-affine leaves. 
The space of $i_0$-dimensional hemispheres in $\SI^{n-1}_{\bv_{\tilde E}}$ with boundary $\SI^{i_0-1}_\infty$ form 
a projective sphere $\SI^{n-i_0-1}$. 
The projection \[\SI^{n-1}_{\bv_{\tilde E}} - \SI^{i_0-1}_\infty \ra \SI^{n-i_0-1} \] 
gives us an image of $\tilde \Sigma_{\tilde E}$ 
that is the interior of a  properly convex compact set $K$. (See \cite{ChCh} for details. See also \cite{GV}.)
Here, we will call $i_{0}$ the {\em fiber-dimension} of the NPNC-end $E$. 

Let $\SI^{i_0}_\infty$ be a great $i_0$-dimensional sphere containing $\bv_{\tilde E}$ corresponding to the directions
of $\SI^{i_0-1}_\infty$ from $\bv_{\tilde E}$. 
The space of  $(i_0+1)$-dimensional hemispheres 
with boundary $\SI^{i_0}_\infty$ again has the structure of the projective sphere $\SI^{n-i_0-1}$, 
identifiable with the above one. 
We also have the projection \[\Pi_K: \SI^{n} - \SI^{i_0}_\infty \ra \SI^{n-i_0-1} \] 
giving us the image $K^o$ of $\tilde \Sigma_{\tilde E}$. 


Each $i_0$-dimensional 
hemisphere $H^{i_0}$ in \hyperlink{term-lsphere}{$\SI^{n-1}_{\bv_{\tilde E}}$} with 
$\Bd H^{i_0} = \SI^{i_0-1}_\infty$ corresponds to an $(i_0+1)$-dimensional hemisphere 
$H^{i_0+1}$ in $\SI^n$ with common boundary $\SI^{i_0}_\infty$ that contains $\bv_{\tilde E}$. 
Let $\SL_\pm(n+1, \bR)_{\SI^{i_0}_\infty, \bv_{\tilde E}}$ 
denote the subgroup of $\Aut(\SI^n)$ acting on 
$\SI^{i_0}_\infty$ and $\bv_{\infty}$.  
The projection $\Pi_K$ induces a homomorphism 
\[\Pi_K^{\ast}: \SL_\pm(n+1, \bR)_{\SI^{i_0}_\infty, \bv_{\tilde E}} 
\ra \SL_\pm( n-i_{0}, \bR).\]

Suppose that $\SI^{i_0}_\infty$ is $h(\pi_1(\tilde E))$-invariant. 
We let $N$ be the subgroup of $h(\pi_1(\tilde E))$ of elements inducing trivial actions on $\SI^{n-i_0-1}$. 
The above exact sequence 
\[ 1 \ra N \ra h(\pi_1(\tilde E)) \stackrel{\Pi^*_K}{\longrightarrow} N_K \ra 1\] 
is so that the kernel normal subgroup $N$ acts trivially on $\SI^{n-i_0-1}$ but acts on each hemisphere with 
boundary equal to $\SI^{i_0}_\infty$
and $N_K$ acts faithfully by the action induced from $\Pi^*_K$.
Here $N_K$ is a subgroup of the group $\Aut(K)$ of the group of projective automorphisms of $K$.
We say that $N_{K}$ is the {\em semisimple quotient } of $h(\pi_1(\tilde E))$ or $\bGamma_{\tilde E}$. 

\begin{theorem}\label{thm-folaff}
Let $\Sigma_{\tilde E}$ be the end orbifold of an NPNC p-R-end $\tilde E$ of 
properly convex $n$-orbifold $\orb$.  
Let $\torb$ be the universal cover in $\SI^n$. 
Suppose that the holonomy group of $\pi_{1}(\orb)$ is strongly irreducible.
We consider the induced action of $h(\pi_1(\tilde E))$ 
on $\Aut(\SI^{n-1}_{\bv_{\tilde E}})$ for the corresponding p-end vertex $\bv_{\tilde E}$. 
Then 
\begin{itemize} 
\item $\Sigma_{\tilde E}$ is foliated by complete-affine subspaces of dimension $i_0$, $i_0 > 0$.
\item $h(\pi_1(\tilde E))$ fixes the great sphere $\SI^{i_0-1}_\infty$ of dimension $i_0-1$ in \hyperlink{term-lsphere}{$\SI^{n-1}_{\bv_{\tilde E}}$}. 
\item There exists an exact sequence 
\[ 1 \ra N \ra \pi_1(\tilde E) \stackrel{\Pi^*_K}{\longrightarrow} N_K \ra 1 \] 
where $N$ acts trivially on quotient great sphere $\SI^{n-i_0-1}$ and 
$N_K$ acts faithfully on a properly convex domain $K^o$ in $\SI^{n-i_0-1}$ isometrically 
with respect to the Hilbert metric $d_K$. 
\end{itemize} 
\end{theorem}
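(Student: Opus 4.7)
The plan is to assemble the theorem from pieces already in place: the canonical foliation of Proposition \ref{prop-projconv}, its functoriality under projective automorphisms, and the standard quotient-by-kernel argument. Since $\tilde E$ is NPCC, its transverse domain $\tilde\Sigma_{\tilde E}\subset \SI^{n-1}_{\bv_{\tilde E}}$ is convex but neither properly convex nor complete affine. By Proposition \ref{prop-projconv}, $\tilde\Sigma_{\tilde E}$ is therefore foliated by a unique maximal family of complete affine subspaces of some common dimension $i_0\geq 1$, all sharing the common ideal boundary equal to a fixed projective subsphere $\SI^{i_0-1}_\infty\subset \Bd\tilde\Sigma_{\tilde E}$. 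This gives the first bullet.

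For the second bullet, I would emphasize that the foliation described above is intrinsic: it consists of the maximal complete affine subspaces contained in $\tilde\Sigma_{\tilde E}$, and any projective automorphism of $\tilde\Sigma_{\tilde E}$ must send maximal complete affine subspaces to such. Hence the group $h(\pi_1(\tilde E))$, which acts projectively on $\SI^{n-1}_{\bv_{\tilde E}}$ preserving $\tilde\Sigma_{\tilde E}$, must permute the leaves of the foliation, and therefore preserve the set-theoretic common ideal boundary $\SI^{i_0-1}_\infty$. Since $\SI^{i_0-1}_\infty$ is the unique great sphere of dimension $i_0-1$ arising this way, $h(\pi_1(\tilde E))$ fixes it as a set.

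For the third bullet, because $h(\pi_1(\tilde E))$ preserves $\SI^{i_0-1}_\infty$, it descends through the projection $\SI^{n-1}_{\bv_{\tilde E}}-\SI^{i_0-1}_\infty\to \SI^{n-i_0-1}$ to a projective action of $h(\pi_1(\tilde E))$ on the quotient sphere $\SI^{n-i_0-1}$. Under this projection the image of $\tilde\Sigma_{\tilde E}$ is exactly the interior of a compact properly convex set $K\subset \SI^{n-i_0-1}$, as noted in the discussion preceding the theorem (cf.\ \cite{ChCh}, \cite{GV}). Setting $N_K$ to be the image group in $\Aut(K)$ and $N$ the kernel then yields the short exact sequence
\[
1 \longrightarrow N \longrightarrow h(\pi_1(\tilde E)) \stackrel{\Pi_K^{*}}{\longrightarrow} N_K \longrightarrow 1,
\]
where $N$ acts trivially on $\SI^{n-i_0-1}$ by construction. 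Finally, since $N_K$ acts on the properly convex open $K^o$ by projective automorphisms preserving $K$, it automatically acts isometrically with respect to the Hilbert metric $d_K$ by the standard fact that $\Aut(K)$ preserves $d_K$.

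The only point that requires real care, and which I would treat as the main obstacle, is verifying the rigidity of the foliation, namely that $\tilde\Sigma_{\tilde E}$ really admits a unique foliation by maximal complete affine subspaces with a common ideal boundary sphere, so that the $h(\pi_1(\tilde E))$-action has no choice but to preserve $\SI^{i_0-1}_\infty$. This uniqueness is exactly the content of Proposition \ref{prop-projconv} (drawing on \cite{ChCh}, \cite{GV}): two maximal complete affine subspaces cannot meet without violating convexity of a properly convex ambient domain. Once uniqueness is in hand, the invariance of $\SI^{i_0-1}_\infty$, the descent to $\SI^{n-i_0-1}$, the proper convexity of $K$, and the isometric action on $(K^o,d_K)$ are all immediate, and the exact sequence is built by definition of $N$ and $N_K$. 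The strong irreducibility of $\pi_1(\orb)$ is not needed for these structural statements but will be used in subsequent refinements of the $N$- and $N_K$-actions in \cite{End3}.
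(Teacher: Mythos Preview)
Your proposal is correct and is essentially the paper's approach spelled out in detail: the paper's proof consists entirely of the citation ``These follow from Section 1.4 of \cite{ChCh}. (See also \cite{GV}.)'', and your argument via Proposition~\ref{prop-projconv} (which itself draws on \cite{ChCh} and \cite{GV}) together with the standard kernel/image construction unpacks exactly that reference. Your remark that strong irreducibility is not used here is also accurate.
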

\begin{proof} 
These follow from Section 1.4 of \cite{ChCh}. (See also \cite{GV}.) 
\end{proof} 


\subsection{The main eigenvalue estimations}

We denote by $\bGamma_{\tilde E}$ the p-end fundamental group acting on $U$ fixing $\bv_{\tilde E}$. 
Denote the induced foliations on $\Sigma_{\tilde E}$ and $\tilde \Sigma_{\tilde E}$ by ${\mathcal F}_{\tilde E}$.
For each element $g \in \bGamma_{\tilde E} $, we define $\leng_K(g)$ to 
be $\inf \{ d_K(x, g(x))| x \in K^o \}$.

\begin{definition}\label{defn:jordan}
Given an eigenvalue $\lambda$ of an element $g \in \SLnp$, 
a {\em $\bC$-eigenvector} $\vec v$ is a nonzero vector in {
\[\bR E_{\lambda}(g) := \bR^{n+1} \cap \big(\ker (g - \lambda I) 
+ \ker (g - \bar \lambda I)\big), \lambda \ne 0, {\mathrm{Im}} \lambda \geq 0.\]}
A $\bC$-fixed point is the direction of a $\bC$-eigenvector. 

Any element of $g$ has a primary decomposition. (See Section 6.8 of \cite{HK}.)
Write the minimal polynomial of $g$ as $\prod_{i=1}^{m} (x- \lambda_{i})^{r_{i}}$ for $r_{i} \geq 1$ and 
mutually distinct complex numbers $\lambda_{1}, \dots, \lambda_{m}$. 
Define
\[C_{\lambda_{i}}(g) := \ker (g- \lambda_{i}\Idd)^{r_{i}} \subset \bC^{n+1}\]
where $r_{i} = r_{j}$ if $\lambda_{i} = \bar \lambda_{j}$. 
Then the primary decomposition theorem states 
\[\bC^{n+1} = \bigoplus_{i=1}^{m} C_{\lambda_{i}}(g).\]

A point $[\vec v], \vec v \in \bR^{n+1},$ is {\em affiliated} with a norm $\mu$ of an eigenvalue if 
\[\vec v \in {\mathcal{R}}_{\mu}(g):= \bigoplus_{i \in \{j| |\lambda_{j}| = \mu\}} C_{\lambda_{i}}(g) \cap \bR^{n+1}.\] 
Let $\mu_{1}, \dots, \mu_{l}$ denote the set of distinct norms of eigenvalues of $g$. 
We also have $\bR^{n+1} = \bigoplus_{i=1}^{l} {\mathcal{R}}_{\mu_{i}}(g)$. 
Here, ${\mathcal{R}}_{\mu}(g) \ne \{0\}$ if $\mu$ equals $|\lambda_{i}|$ for at least one $i$. 
\end{definition}

Let $V^{i_{0}+1}_\infty$ denote the subspace of $\bR^{n+1}$ corresponding 
to $\SI^{i_{0}}_\infty$. 
By { the $g$-invariance of} $\SI^{i_{0}}_\infty$, 
if \[{\mathcal{R}}_{\mu}(g)\cap V^{i_{0}+1}_\infty \ne \emp\] for some finite collection $J$, 
then ${\mathcal{R}}_{\mu}(g) \cap V^{i_{0}+1}_\infty$ always contains a $\bC$-eigenvector of $g$. 

\begin{definition}\label{defn:eig} 
Let $\Sigma_{\tilde E}$ be the end orbifold of a nonproperly convex and p-R-end $\tilde E$ of a 
strongly tame properly convex $n$-orbifold $\orb$.
Let $\bGamma_{\tilde E}$ 
be the p-end fundamental group. 
We fix a choice of a Jordan decomposition of $g$ for each $g \in \bGamma_{\tilde E}$. 
\begin{itemize}
\item Let $\lambda_1(g)$ denote the largest norm of the eigenvalue of $g \in \bGamma_{\tilde E}$ affiliated
with $\vec{v} \ne 0$, $[\vec{v}] \in \SI^n - \SI^{i_0}_\infty$,
i.e., \[\lambda_{1}(g):= \max\{\mu\,| \exists\, \vec v \in  {\mathcal{R}}_{\mu}(g) -  V^{i_0+1}_\infty\}.\]
\item Also, let $\lambda_{n+1}(g)$ denote the smallest one affiliated with a nonzero vector $\vec v$, $[\vec{v}] \in \SI^n - \SI^{i_0}_\infty$,
i.e., \[\lambda_{n+1}(g):= \min \{\mu\,| \exists \vec v \in {\mathcal{R}}_{\mu(g)} - V^{i_0+1}_\infty\}. \]
\item Let $\lambda(g)$ be the largest of the norms of the eigenvalues of $g$ with $\bC$-eigenvectors 
of form $\vec v$, $[\vec v] \in \SI^{i_0}_\infty$
and $\lambda'(g)$ the smallest such one. 
\end{itemize} 
\end{definition}



Suppose that $K$ has a decomposition into $K_1 * \cdots * K_{l_0}$ for properly convex domains 
$K_i$, $i =1, \dots, l_0$. 
Let $K_i, i=1, \dots, s$, be the ones with dimension $\geq 2$.
By Theorem 1.1 of \cite{Ben2}, 
$N_K$ is virtually isomorphic to a cocompact subgroup of 
the product \[ \bZ^{l_0-1} \times \Gamma_1 \times \dots \times \Gamma_{s}\]
where $\Gamma_i$ is obtained from $N_K$ by restricting to $K_i$ and 
$\bZ^{l_{0}-1}$ is a free abelian group of finite rank.

We will assume that the p-end fundamental group $\pi_{1}(\tilde E)$ satisfies 
the {transverse weak middle eigenvalue} condition for NPNC-ends: 
\begin{definition}\label{defn:weakmec}
Let $\tilde E$ be an NPNC-p-R-end. 
Let $\bar \lambda(g)$ denote the largest norm of the eigenvalues 
of $g \in \Gamma_{\tilde E}$. 
Let $\lambda_{\bv_{\tilde E}}(g)$ denote the eigenvalue of $g$ at $\bv_{\tilde E}$. 
The p-end $\tilde E$ satisfies 
the {{\em transverse weak middle eigenvalue condition}} for $\tilde E$ if for each element $g$ of $\pi_{1}(\tilde E)$, 
\begin{equation} \label{eqn:mec}
\bar \lambda(g) \geq \lambda_{1}(g) \geq \lambda_{\bv_{\tilde E}}(g) \hbox{ holds. } 
\end{equation} 
Also, the end $E$ of $\orb$ satisfies the same condition if a corresponding p-end does. 
\end{definition}










The following proposition is needed to understand the NPNC-ends.  
The norms of eigenvalues associated with $\SI^{i_{0}}_{\infty}$ are bounded above by the maximal norm associated with outside 
$\SI^{i_{0}}_{\infty}$. 
\begin{proposition}\label{prop-eigSI}  
Let $\Sigma_{\tilde E}$ be the end orbifold of a NPNC p-R-end $\tilde E$ of a
strongly tame properly convex $n$-orbifold $\orb$. 
Suppose that $\torb$ in $\SI^n$ {\rm (}resp. $\bR P^n${\rm )} 
covers $\orb$ as a universal cover. 
Let $\bGamma_{\tilde E}$ be the p-end fundamental group satisfying the {transverse weak middle eigenvalue} condition
for the NPNC-p-R-end $\tilde E$.
Let $g \in \bGamma_{\tilde E}$. 
Then
\[\lambda_1(g) \geq \lambda(g) \geq  \lambda'(g) \geq \lambda_{n+1}(g)\]
holds. 
\end{proposition}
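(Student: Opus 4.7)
The plan is as follows. The middle inequality $\lambda(g)\geq\lambda'(g)$ holds by definition, since both quantities are respectively the maximum and minimum of the same finite collection of norms of eigenvalues of $g$ whose $\bC$-eigenvectors lie in the $g$-invariant subspace $V^{i_0+1}_\infty$. The two outer inequalities are dual to each other under $g\mapsto g^{-1}$: one checks directly from Definition~\ref{defn:eig} that $\lambda_1(g^{-1})=1/\lambda_{n+1}(g)$ and $\lambda(g^{-1})=1/\lambda'(g)$, so the statement $\lambda_1(g^{-1})\geq\lambda(g^{-1})$ is equivalent to $\lambda'(g)\geq\lambda_{n+1}(g)$. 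Since the weak middle eigenvalue condition of Definition~\ref{defn:weakmec} is imposed on every element of $\pi_1(\tilde E)$, it passes to inverses, and it therefore suffices to establish $\lambda_1(g)\geq\lambda(g)$.

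For this main inequality, I argue by contradiction along the dynamical lines of the proof of Theorem~\ref{thm-affinehoro}. Suppose $\lambda(g)>\lambda_1(g)$. Then $\bar\lambda(g)=\lambda(g)$, and every generalized eigenspace of $g$ of norm $\lambda(g)$ is contained in $V^{i_0+1}_\infty$. Choose a generic point $y\in\torb$ whose coordinate representative has a nonzero component in each generalized eigenspace of $g$; such $y$ fill a dense open subset of $\torb$ since $\torb$ is open in $\SI^n$. Expanding $y$ by the primary decomposition of $g$ and applying the standard Jordan block growth estimates to $g^{n_k}y$, and passing to a subsequence to handle possibly complex or negative top-norm eigenvalues, we obtain $[g^{n_k}(y)]\to [v_+]$ where $v_+$ is a nonzero vector in the sum of the maximal-norm generalized eigenspaces, hence lies in $V^{i_0+1}_\infty$. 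Consequently $[v_+]\in\clo(\torb)\cap\SI^{i_0}_\infty$.

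To turn this one-sided limit into a contradiction with the proper convexity of $\torb$, I plan to produce a second $\bGamma_{\tilde E}$-orbit limit antipodal to $[v_+]$ in $\SI^n$. Applying the weak middle eigenvalue condition to $g^{-1}$ yields $\lambda_{\bv_{\tilde E}}(g)\geq\lambda_{n+1}(g)$, and I then run the dual dynamical analysis on $g^{-n_k}(y)$. If the minimal-norm eigenvalue of $g$ is attained by an eigenvector outside $V^{i_0+1}_\infty$, then the corresponding limit $[v_-]$ lies outside $\SI^{i_0}_\infty$ and a routine use of Benoist's Proposition~1.1 of \cite{Ben5}, together with the positivity of the extremal eigenvalues supplied by the action of $\bGamma_{\tilde E}$ on the properly convex $\torb$, yields the required antipodal pair. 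If instead the minimal-norm eigenvalue is attained inside $V^{i_0+1}_\infty$, then $[v_-]\in\clo(\torb)\cap\SI^{i_0}_\infty$ as well, and to extract an antipodal pair I exploit the exact sequence $1\to N\to\pi_1(\tilde E)\to N_K\to 1$ from Theorem~\ref{thm-folaff}: the cocompact $N_K$-action on the properly convex $K^o$, together with Benoist's Proposition~1.1 applied to this action, supplies elements of $\bGamma_{\tilde E}$ whose projective action on $\clo(\torb)$ transports one of $[v_\pm]$ into antipodal position with the other. The main obstacle is precisely this last step: a single one-sided accumulation in $\SI^{i_0}_\infty$ is too weak to contradict proper convexity on its own, and producing the antipodal pair cleanly requires combining the dynamics of $g^{\pm n}$ on $\torb$ with the Benoist dynamics on the quotient convex domain $K^o$.
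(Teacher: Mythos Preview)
This survey paper states Proposition~\ref{prop-eigSI} without proof (the argument is deferred to \cite{End3}), so there is no proof in the paper to compare against directly. However, the technique the paper uses for the closely analogous case~(a) in the proof of Theorem~\ref{thm-comphoro} makes clear what is needed, and your proposal does not supply it.

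Your reduction via $g\mapsto g^{-1}$ and the overall plan of a dynamical contradiction with proper convexity are correct. The gap is in the production of the antipodal pair. Neither branch of your argument actually yields antipodal points in $\clo(\torb)$. In your first branch you take a \emph{single} generic $y$ and form the two limits $[v_+]=\lim g^{n_k}(y)$ and $[v_-]=\lim g^{-n_k}(y)$; these lie in the top and bottom primary subspaces of $g$ respectively, which are typically transverse, not antipodal. Invoking Benoist's Proposition~1.1 of \cite{Ben5} does not convert such a pair into an antipodal one. In your second branch you propose to use elements of $\bGamma_{\tilde E}$ to ``transport one of $[v_\pm]$ into antipodal position with the other''; this is impossible, since every such element preserves the properly convex set $\clo(\torb)$, which contains no antipodal pair.

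The correct mechanism, visible in case~(a) of the proof of Theorem~\ref{thm-comphoro}, is to iterate in \emph{one} direction on \emph{two} carefully chosen points. Let $V_\mu=\mathcal R_{\lambda(g)}(g)\subset V^{i_0+1}_\infty$ be the top-norm primary subspace and $V'=\bigoplus_{\nu<\lambda(g)}\mathcal R_\nu(g)$ its invariant complement. The weak middle-eigenvalue condition gives $\lambda_{\bv_{\tilde E}}(g)\le\lambda_1(g)<\lambda(g)$, so $\bv_{\tilde E}\in P(V')$. One then chooses $y_1,y_2$ in the open p-end neighborhood $U$ whose $V_\mu$-components have opposite signs; along a subsequence $g^{m_i}(y_1)\to f$ and $g^{m_i}(y_2)\to f_-$ for antipodal $f,f_-\in P(V_\mu)$, contradicting proper convexity. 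The existence of such $y_1,y_2$ is where the NPCC structure enters: the radial p-end neighborhood is a union of $(i_0+1)$-hemispheres bounded by $\SI^{i_0}_\infty\supset P(V_\mu)$, and inside such a hemisphere one can find open sets on both sides of the relevant invariant wall. Your outline does not set up this two-point argument, and the substitutes you propose cannot work.
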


\subsection{Basic hypotheses to derive splitting}

We will assume some hypotheses and derive consequences. 
Afterwards, we will show that our ends satisfy these conditions. 

Let $\CN$ denote an $i_{0}$-dimensional partial parabolic group with 
elements of form $\CN(\vec{v})$ given as follows:
\renewcommand{\arraystretch}{1.2}
\begin{equation} \label{eqn-nilmatstd}
\newcommand*{\temp}{\multicolumn{1}{r|}{}}
\CN(\vec{v}):= \left( \begin{array}{ccccccc}         \Idd_{n-i_0-1} & 0 &\temp &  0 & 0& \dots & 0 \\ 
                                                       \vec{0}           & 1  &\temp &  0  & 0&  \dots & 0\\
                                                       \cline{1-7}
                                                       \vec{c}_1(\vec{v})    & {v}_1 &\temp & 1   & 0 &  \dots & 0 \\
                                                       \vec{c}_2(\vec{v})   & {v}_2 &\temp & 0   & 1 & \dots & 0\\
                                                       \vdots   & \vdots &\temp & \vdots & \vdots & \ddots & \vdots \\
                                                       \vec{c}_{i_0+1}(\vec{v})  & \frac{1}{2}||\vec{v}||^2& \temp & {v}_1 & v_2 & \dots  & 1
                                                        
                                    \end{array} \right)
 \end{equation}
where $||v||$ is the norm of $\vec{v} = (v_1, \cdots, v_{i_{0}}) \in \bR^{i_0}$. 
We require $\CN$ to be a group and 
\[\bR^{i_{0}} \ra \CN, \vec{v} \ra \CN(\vec{v}) \]
be an isomomorphism. 
We note $\CN(\vec{v}) = \prod_{j=1}^{i_0} \CN(e_j)^{v_j}$.  
By the way we defined this, 
\[\vec{c}_k:\bR^{i_0} \ra \bR^{n-i_0-1} \hbox{ for } k=1, \dots, i_0, \] 
are  linear functions of $\vec{v}$ 
defined as $\vec{c}_k(\vec{v}) = \sum_{j=1}^{i_0} \vec c_{kj} v_j$ for $\vec{v} = (v_1, v_2, \dots, v_{i_0})$
so that we form a group. (We do not need the property of $\vec{c}_{i_0+1}$ at the moment.)

We denote by $C_1(\vec{v})$ the $(n-i_0-1) \times i_0$-matrix given by 
the matrix with rows $\vec{c}_j(\vec{v})$ for $j= 1, \dots, i_0$
and by $c_2(\vec{v})$ the row $(n-i_0-1)$-vector $\vec{c}_{i_0+1}(\vec{v})$. 
The lower-right $(i_0+1)\times (i_0+1)$-matrix in the
matrix is in the {\em standard matrix form}. 

Since $\SI^{i_0}_\infty$ is invariant, the element
$g$, $g\in \bGamma_{\tilde E}$, can be put into a {\em standard} form 
\renewcommand{\arraystretch}{1.2}
\begin{equation}\label{eqn-matstd}
\newcommand*{\temp}{\multicolumn{1}{r|}{}}
\left( \begin{array}{ccccccc} 
S(g) & \temp & s_1(g) & \temp & 0 & \temp & 0 \\ 
 \cline{1-7}
s_2(g) &\temp & a_1(g) &\temp & 0 &\temp & 0 \\ 
 \cline{1-7}
C_1(g) &\temp & a_4(g) &\temp & A_5(g) &\temp & 0 \\ 
 \cline{1-7}
c_2(g) &\temp & a_7(g) &\temp & a_8(g) &\temp & a_9(g) 
\end{array} 
\right)
\end{equation}
where $S(g)$ is an $(n-i_0-1)\times (n-i_0-1)$-matrix
and $s_1(g)$ is an $(n-i_0-1)$-column vector, 
$s_2(g)$ and $c_2(g)$ are $(n-i_0-1)$-row vectors, 
$C_1(g)$ is an $i_0\times (n-i_0-1)$-matrix, 
$a_4(g)$  is an  $i_0$-column vector, 
$A_5(g)$ is an $i_0\times i_0$-matrix, 
$a_8(g)$ is an $i_0$-row vector, 
and $a_1(g), a_7(g)$, and $ a_9(g)$ are scalars. 
(We can do this for every $g\in \bGamma_{\tilde E}$ simultaneously.) 

Denote 
\[ \hat S(g) =
\left( \begin{array}{cc} 
S(g) & s_1(g)\\ 
s_2(g) & a_1(g)
\end{array} \right),\]
and is called a semisimple part of $g$.  

\begin{hypothesis} \label{h-norm}
\begin{itemize} 
\item  Let $K$ be defined as above for a p-R-end $\tilde E$.
Assume that $K^{o}/N_K$ is a compact set.
\item $\bGamma_{\tilde E}$ satisfies the {transverse weak middle eigenvalue} condition.
And elements are in the matrix form { \eqref{eqn-matstd}} under a common coordinate system. 
\item A group $\CN$ of form \eqref{eqn-nilmatstd} 
acts on each hemisphere with boundary $\SI^i_{\infty}$, and fixes
$\bv_{\tilde E} \in \SI^i_\infty$. 
\item The p-end fundamental group $\bGamma_{\tilde E}$ normalizes 
$\CN$ {both} in the above coordinate system. 
\item $\CN$ acts on a p-end neighborhood $U$ of $\tilde E$.
\item $\CN$ acts on the space of $i_{0}$-dimensional leaves of $\tilde \Sigma_{\tilde E}$
by an induced action. 
\end{itemize}
\end{hypothesis}

We show the following: 

Let $a_5(g)$ denote $\left| \det(A^5_g)\right|^{\frac{1}{i_0}}$.
Define 
\[\mu_g:= \frac{a_5(g)}{a_1(g)} = \frac{a_9(g)}{a_5(g)} \hbox{ for } g \in \bGamma_{\tilde E}\]
by following Lemma \ref{lem-similarity}. 

\begin{lemma}[Similarity]  \label{lem-similarity}
Assume Hypothesis \ref{h-norm}. 
Then 
any element $g \in \bGamma_{\tilde E}$ 
induces an $(i_0\times i_0)$-matrix $M_g$ given by
\[g \CN(\vec{v}) g^{-1} = \CN(\vec{v}M_g) \hbox{ where } \] 
\[M_g = \frac{1}{a_1(g)} (A_5(g))^{-1} = \mu_g O_5(g)^{-1} \]
for $O_5(g)$ in a compact Lie group $G_{\tilde E}$, and 
the following hold. 
\begin{itemize} 
\item $(a_5(g))^2 = a_1(g) a_9(g)$ or equivalently $\frac{a_5(g)}{a_1(g)}= \frac{a_9(g)}{a_5(g)}$.
\item Finally, $a_1(g), a_5(g),$ and $a_9(g)$ are all nonzero. 
\end{itemize}  
\end{lemma}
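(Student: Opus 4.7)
The plan is to derive the formula for $M_g$, the similarity structure $M_g=\mu_g O_5(g)^{-1}$, and the identity $(a_5(g))^2=a_1(g)a_9(g)$ all from a single block matrix calculation on the $(i_0+2)$-dimensional invariant subspace whose projectivization is the hemisphere $\SI^{i_0+1}$ through $\bv_{\tilde E}$ with boundary $\SI^{i_0}_\infty$. First I would check directly from \eqref{eqn-nilmatstd} that $\CN(\vec v)\CN(\vec w)=\CN(\vec v+\vec w)$: the cross product $\vec v\cdot\vec w$ combines with $\tfrac12\|\vec v\|^2+\tfrac12\|\vec w\|^2$ to yield $\tfrac12\|\vec v+\vec w\|^2$ in the corner slot. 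Hence $\vec v\mapsto\CN(\vec v)$ is a group isomorphism $(\bR^{i_0},+)\to\CN$, and since Hypothesis \ref{h-norm} forces $\bGamma_{\tilde E}$ to normalise $\CN$ in the common coordinate system of \eqref{eqn-matstd}, conjugation by each $g\in\bGamma_{\tilde E}$ induces a well-defined linear automorphism of $\bR^{i_0}$, which I write as right multiplication $\vec v\mapsto\vec v M_g$.

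Next I would restrict to the $(i_0+2)$-dimensional invariant subspace described above. There $g$ reduces to a block lower triangular matrix $\tilde g$ with diagonal blocks $a_1(g)$, $A_5(g)$, $a_9(g)$, while $\CN(\vec v)$ becomes a unipotent lower triangular matrix $\tilde\CN(\vec v)$ with a parameter column $\vec v^{\,T}$, a parameter row $\vec v$, and the quadratic corner entry $\tfrac12\|\vec v\|^2$. A direct block multiplication of $\tilde g\,\tilde\CN(\vec v)\,\tilde g^{-1}$ yields a lower triangular unipotent matrix whose first column and penultimate row are again linear in $\vec v$; comparing with the corresponding entries of $\tilde\CN(\vec v M_g)$ produces the stated formula for $M_g$ in terms of $A_5(g)^{-1}$ and $a_1(g)$. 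The ambient entries $C_1(g),c_2(g),s_1(g),s_2(g)$ of $g$ and $\vec c_k(\vec v)$ of $\CN(\vec v)$ outside this $(i_0+2)\times(i_0+2)$ block contribute only to the auxiliary $\vec c_k$-entries of the conjugate and are irrelevant to the identity claimed.

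Finally, comparing the corner entry $\tfrac12\|\vec v M_g\|^2$ of the conjugate with what the same block multiplication produces there forces the quadratic relation $A_5(g)^T A_5(g)=a_1(g)a_9(g)\,\Idd$. Equivalently $A_5(g)=a_5(g)\,O_5(g)$ for some $O_5(g)\in\Ort(i_0)$, and taking $i_0$-th roots of determinants yields $(a_5(g))^2=a_1(g)a_9(g)$, whence $\mu_g=a_5(g)/a_1(g)=a_9(g)/a_5(g)$ and $M_g=\mu_g O_5(g)^{-1}$. The assignment $g\mapsto O_5(g)$ takes values in a common compact Lie subgroup $G_{\tilde E}\subset\Ort(i_0)$ because the induced $N_K$-action on the compact quotient $K^o/N_K$ keeps the orbit of $O_5$ bounded, and one takes $G_{\tilde E}$ to be its closure. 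Nonvanishing of $a_1(g),a_5(g),a_9(g)$ is automatic from the invertibility of $\tilde g$, since all three occupy the diagonal of a lower triangular block.

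The main technical delicacy I foresee lies in the corner computation: one must verify that the auxiliary data $\vec c_k(\vec v)$ in \eqref{eqn-nilmatstd} together with the off-block entries of $g$ conspire so that $\tilde g\,\tilde\CN(\vec v)\,\tilde g^{-1}$ really lands in the parametrised family $\CN$, and in particular that the Euclidean quadratic form $\tfrac12\|\vec v\|^2$ reappears in the correct slot without extraneous mixed terms in $\vec v$. This is precisely the content of the normalisation clause of Hypothesis \ref{h-norm}: it forces the off-diagonal auxiliary data of $g$ to be compatible, so that only the clean similarity relation survives from the block computation.
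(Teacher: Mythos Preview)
This paper is a survey and does not supply a proof of Lemma~\ref{lem-similarity}; the argument is deferred to \cite{End3}. So there is no proof in the paper to compare yours against. Your block-conjugation strategy is the natural one and is almost certainly what the full proof in \cite{End3} does.

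One point deserves more care than you give it. The span of the last $i_0+2$ coordinates is \emph{not} in general $g$-invariant, because the column $s_1(g)$ in \eqref{eqn-matstd} may be nonzero; so you cannot literally ``restrict $g$'' there and conjugate. What \emph{is} invariant for every $g$ is the $(i_0+1)$-dimensional subspace $V^{i_0+1}_\infty$ carrying $\SI^{i_0}_\infty$. Restricting the conjugation to $V^{i_0+1}_\infty$ cleanly yields the row identity $\vec w=a_9(g)\,\vec v\,A_5(g)^{-1}$, hence $M_g=a_9(g)A_5(g)^{-1}$, with no interference from $s_1,s_2,C_1,c_2$ or the $\vec c_k$. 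For the orthogonality relation $A_5^TA_5=a_1a_9\,\Idd$, the column identity coming from the $\vec v^{\,T}$-slot and the quadratic corner entry do pick up contributions from $s_1(g)$ and from $C_1^{\CN}(\vec v)$ in the full computation (contrary to your claim that these are confined to the $\vec c_k$-entries): the relevant scalar appearing is the $(n-i_0,n-i_0)$-entry of $\hat S(g)^{-1}$, which equals $a_1(g)^{-1}$ only once one has arranged $s_1(g)=0$. That coordinate normalization is exactly what Lemma~\ref{lem-conedecomp1} provides, and in \cite{End3} the two lemmas are proved together. Your final paragraph correctly identifies this as the delicate step; the resolution, however, is a change of coordinates (pinning down the cone vertex $k$ of $K$) rather than a formal consequence of the normalisation clause alone.
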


\begin{lemma}[$K$ is a cone]  \label{lem-conedecomp1}
Assume Hypothesis \ref{h-norm}. 
Then the following hold\,{\rm :} 
\begin{itemize} 
\item $K$ is a cone over an 
$(n-i_0-2)$-dimensional properly convex domain $K''$. 
\item The rows of $(C_1({\vec{v}}), \vec{v}^T)$ are proportional to a single vector and 
we can find a coordinate system where $C_1({\vec v}) = 0$
not changing any entries of the lower-right $(i_0+2)\times (i_0+2)$-submatrices for all $\vec v \in \bR^{i_0}$. 
\item We can find a common coordinate system where 
\begin{equation}\label{eqn-O5coor}
O_5(g)^{-1} = O_5(g)^T, O_5(g) \in O(i_0), s_1(g) = s_2(g) = 0 \hbox{ for all } g \in \bGamma_{\tilde E}.
\end{equation} 
\item In this coordinate system, we have
\begin{equation} \label{eqn-conedecomp1}
 a_9(g) c_2({\vec{v}})  
 = c_2({\mu_g\vec{v}O_5(g)^{-1}}) S(g)  + \mu_g \vec{v} O_5(g)^{-1} C_1(g).
\end{equation} 


\end{itemize}
\end{lemma}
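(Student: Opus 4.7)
My plan is to extract constraints from the two compatibilities in Hypothesis \ref{h-norm}: the abelian group law $\CN(\vec v)\CN(\vec w)=\CN(\vec v+\vec w)$, and the normalization $g\CN(\vec v)g^{-1}=\CN(\vec v M_g)$ with $M_g=\mu_g O_5(g)^{-1}$ from Lemma \ref{lem-similarity}. Both relations will be expanded in the block form of \eqref{eqn-nilmatstd} and \eqref{eqn-matstd} and compared entry-by-entry.

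First, the $(n+1,j)$-entry of the group law $\CN(\vec v)\CN(\vec w)=\CN(\vec v+\vec w)$ for $j\leq n-i_0-1$ yields the symmetry $(\vec c_{kj})_m=(\vec c_{jk})_m$ for the linear parts $\vec c_k(\vec v)=\sum_j\vec c_{kj}v_j$. Next, the $(n-i_0+k,j)$-entry of $g\CN(\vec v)=\CN(\vec v M_g)g$ for $j\leq n-i_0-1$ produces the intertwining identity
\[\sum_l (A_5(g))_{k,l}\,\vec c_l(\vec v)=\vec c_k(\vec v M_g)\,S(g)+(\vec v M_g)_k\,s_2(g)\]
for all $\vec v$. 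Because the image of $g\mapsto O_5(g)$ lies in a compact subgroup $G_{\tilde E}\subset\GL(i_0,\bR)$ by Lemma \ref{lem-similarity}, I would first change the basis of $\bR^{i_0}$ so that $G_{\tilde E}\subset O(i_0)$, giving $O_5(g)^{-1}=O_5(g)^T$. The intertwining identity above then says that the symmetric tensor $\vec c_{kj}$ is $G_{\tilde E}$-equivariant in its $k,j$ indices; decomposing $\bR^{i_0}$ into $G_{\tilde E}$-isotypic components and applying Schur's lemma component by component forces $\vec c_{kj}=\delta_{kj}\vec\phi$ for a single $\vec\phi\in\bR^{n-i_0-1}$. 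Hence $\vec c_k(\vec v)=v_k\vec\phi$, and the rows of $(C_1(\vec v),\vec v^T)$ are all scalar multiples of the single vector $(\vec\phi^T,1)$.

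Next, a block-upper-triangular change of basis in the first $n-i_0$ coordinates that absorbs $\vec\phi$ into $e_{n-i_0}$ eliminates $\vec\phi$ from $\vec c_k$, yielding $C_1(\vec v)=0$; because this change acts trivially on the last $i_0+1$ coordinates, the lower-right $(i_0+2)\times(i_0+2)$ blocks are untouched. In the new coordinates $e_{n-i_0}$ is a common-invariant direction for every $\hat S(g)$, so its $\Pi_K$-image $\bv'\in\clo K$ is $N_K$-fixed. If $\bv'$ were in $K^o$ then, by Proposition \ref{prop-Ben2}, the admissible decomposition of $\clo K$ would factor out an extra abelian center in $\bGamma_{\tilde E}$, contradicting strong irreducibility of $\pi_1(\orb)$ via Proposition \ref{prop-Benoist}; hence $\bv'\in\Bd K$, and Proposition \ref{prop-Ben2} identifies $K$ as the strict join $\{\bv'\}\ast K''$ for a properly convex $(n-i_0-2)$-dimensional $K''$. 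A further adjustment of basis within the first $n-i_0$ coordinates that aligns the cone axis with $e_{n-i_0}$ forces $s_1(g)=0$; the dual argument in the co-direction, using the $G_{\tilde E}$-invariant inner product, forces $s_2(g)=0$, for every $g\in\bGamma_{\tilde E}$. Finally, formula \eqref{eqn-conedecomp1} is read off from the $(n+1,j)$-entry of $g\CN(\vec v)=\CN(\vec v M_g)g$ for $j\leq n-i_0-1$: after setting $s_2(g)=0$ and using $\vec c_k=0$ for $k\leq i_0$, the identity collapses exactly to the claimed formula, with $c_2(\vec v):=\vec c_{i_0+1}(\vec v)$.

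The main obstacle I anticipate is the Schur-rigidity step in the second paragraph: deducing the diagonality $\vec c_{kj}=\delta_{kj}\vec\phi$ from symmetry plus $G_{\tilde E}$-equivariance requires both the compactness supplied by Lemma \ref{lem-similarity} and careful handling of non-irreducible $G_{\tilde E}$-representations, while ruling out an interior cone point $\bv'\in K^o$ relies on strong irreducibility and Benoist's classification (Propositions \ref{prop-Benoist} and \ref{prop-Ben2}). The remaining steps are routine block calculations using the intertwining identities above.
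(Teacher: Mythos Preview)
This survey paper does not contain its own proof of the lemma (it defers to \cite{End3}), so I can only assess your argument on its merits; there is, however, a genuine gap.

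The weak point is exactly the step you flag as the ``main obstacle,'' and your proposed resolution does not close it. The intertwining identity you derive,
\[
A_5(g)\,C_1(\vec v)=C_1(\vec vM_g)\,S(g)+(\vec vM_g)^T s_2(g),
\]
is \emph{not} a $G_{\tilde E}$-equivariance condition on the tensor $(\vec c_{kj})$ in its $(k,j)$ indices alone: the matrix $S(g)$ acts on the target $\bR^{n-i_0-1}$, and there is an additional $s_2(g)$ term. So passing to ``$G_{\tilde E}$-isotypic components and Schur'' is not available in the form you state. Even if you could isolate a pure $O(i_0)$-equivariance, the compact group $G_{\tilde E}=\overline{\{O_5(g)\}}$ need not act irreducibly on $\bR^{i_0}$; in the extreme case $G_{\tilde E}=\{\Idd\}$ (which nothing in Hypothesis \ref{h-norm} excludes) every symmetric tensor $\vec c_{kj}$ is equivariant, and the conclusion $\vec c_{kj}=\delta_{kj}\vec\phi$ simply fails. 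Decomposing into isotypics gives at best a block-diagonal family with an independent vector $\vec\phi_\alpha$ for each isotypic block, not a single $\vec\phi$.

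What is missing is the geometric input from Hypothesis \ref{h-norm}: $\CN$ acts on the \emph{properly convex} p-end neighborhood $U$ and on each $i_0$-dimensional leaf. The row-proportionality of $(C_1(\vec v),\vec v^T)$ is forced by proper convexity of $U$, not by the normalization algebra; roughly, if two rows were independent, a suitable one-parameter subgroup of $\CN$ would have orbits in $U$ incompatible with proper convexity (compare the argument in Lemma \ref{lem-unithoro}). Once $C_1(\vec v)=0$ is obtained this way, your derivation of $s_2(g)=0$ from the $(3,1)$-block is fine, but your claim that ``$e_{n-i_0}$ is a common invariant direction for every $\hat S(g)$'' (hence $s_1(g)=0$) is not justified by the coordinate change you describe: absorbing $\vec\phi$ into $e_{n-i_0}$ does not by itself force $s_1(g)=0$. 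That step, and with it the cone structure of $K$, again needs the convexity of $U$ together with the $N_K$-action on $K^o$ (via Benoist's decomposition, Proposition \ref{prop-Ben2}), rather than a purely linear-algebraic manoeuvre.
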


\begin{lemma} \label{lem-matrix}
Assume Hypothesis \ref{h-norm}. 
Then we can find coordinates so that the following holds for all $g \in \Gamma_{\tilde E}$\, {\rm :} 
\begin{align} 
\, \frac{a_{9}(g)}{a_{5}(g)} O_5(g)^{-1} a_4(g) &= a_8(g)^{T} \hbox{ or }\,  \frac{a_{9}(g)}{a_{4}(g)}   a_4(g)^T O_5(g) = a_8(g), \, \\
\, \hbox{If } \mu_{g} =1, \hbox{ then } &
a_1(g) = a_9(g)  = \lambda_{\bv_{\tilde E}}(g) \hbox{ and } 
A_5(g) = \lambda_{\bv_{\tilde E}}(g) O_5(g). 
\end{align}
\end{lemma}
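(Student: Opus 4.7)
The plan is to derive both identities from the single conjugation relation $g\,\CN(\vec{v}) = \CN(\vec{v}M_g)\,g$ implicit in Hypothesis \ref{h-norm} and Lemma \ref{lem-similarity}. First I would fix the common coordinates guaranteed by Lemma \ref{lem-conedecomp1}, so that $s_1(g) = s_2(g) = 0$, $C_1(\vec{v}) = 0$, and $O_5(g)$ is orthogonal. In these coordinates the action of $g$ on the last $i_0+2$ coordinates is governed by the lower-right block of \eqref{eqn-matstd}, and the full conjugation identity reduces, for the entries I care about, to scalar identities valid for every $\vec{v} \in \bR^{i_0}$.

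For the first identity, I would expand the entry in the $\bv_{\tilde E}$-row and the $a_1(g)$-column of the product on both sides of $g\,\CN(\vec{v}) = \CN(\vec{v}M_g)\,g$. Direct multiplication gives
\[
a_8(g)\,\vec{v}^T + \tfrac{1}{2}a_9(g)\|\vec{v}\|^2 \;=\; (\vec{v}M_g)\,a_4(g) + \tfrac{1}{2}a_1(g)\|\vec{v}M_g\|^2,
\]
with the constant terms $a_7(g)$ cancelling. Comparing the part linear in $\vec{v}$ yields $a_8(g)\,\vec{v}^T = \vec{v}M_g\,a_4(g)$ for every $\vec{v}$, so $a_8(g)^T = M_g\,a_4(g)$. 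Substituting $M_g = \mu_g O_5(g)^{-1}$ and using $\mu_g = a_9(g)/a_5(g)$ from Lemma \ref{lem-similarity} produces exactly the first stated identity; the equivalent second form is obtained by transposing and using $O_5(g)^{-1} = O_5(g)^T$ from Lemma \ref{lem-conedecomp1}. As a consistency check, matching the quadratic-in-$\vec{v}$ parts recovers the relation $a_5(g)^2 = a_1(g)a_9(g)$ already appearing in Lemma \ref{lem-similarity}.

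For the second identity, I would simply insert $\mu_g = 1$ into the identities $\mu_g = a_5(g)/a_1(g) = a_9(g)/a_5(g)$ given by Lemma \ref{lem-similarity}, which forces $a_1(g) = a_5(g) = a_9(g)$. Because $\bv_{\tilde E}$ corresponds to the last coordinate direction and $g$ acts on it by the diagonal scalar $a_9(g)$, one reads off $\lambda_{\bv_{\tilde E}}(g) = a_9(g)$, giving $a_1(g) = a_9(g) = \lambda_{\bv_{\tilde E}}(g)$. The relation $A_5(g) = a_5(g)\,O_5(g)$, extracted from $M_g = \mu_g O_5(g)^{-1}$ via the similarity computation that defined $M_g$ (together with the orthogonality of $O_5(g)$), then immediately yields $A_5(g) = \lambda_{\bv_{\tilde E}}(g)\,O_5(g)$.

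The main obstacle is not analytical but organizational: keeping track of which block entry of the conjugation identity encodes which geometric relation, and ensuring all normalizations from the preceding coordinate reductions are applied consistently. The key structural input is that the standard form of the lower-right $(i_0+1)\times(i_0+1)$ block of \eqref{eqn-matstd} mimics the action of a parabolic subgroup of a hyperbolic isometry group on a horosphere, and it is this ``horospherical'' structure that enforces the self-dual pairing between the row $a_8(g)$ and the column $a_4(g)$ expressed by the first identity.
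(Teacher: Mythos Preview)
The paper is a survey article and does not include a proof of this lemma; it is stated without argument, with the proofs deferred to the companion paper \cite{End3}. Your approach---reading off block identities from the single conjugation relation $g\,\CN(\vec v)=\CN(\vec v M_g)\,g$ in the coordinates of Lemma~\ref{lem-conedecomp1}---is the natural one and is carried out correctly. The comparison of the $(\bv_{\tilde E}\text{-row},\,a_1\text{-column})$ entry gives exactly the linear identity $a_8(g)^T=M_g\,a_4(g)$, and the substitution $M_g=\mu_g O_5(g)^{-1}=\tfrac{a_9(g)}{a_5(g)}O_5(g)^{-1}$ yields the first displayed formula; the $\mu_g=1$ statement then follows immediately from the equalities in Lemma~\ref{lem-similarity} together with $A_5(g)=a_5(g)O_5(g)$, which you correctly extract from the middle-block comparison.

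One small remark: the relation $A_5(g)=a_5(g)O_5(g)$ is most cleanly obtained not from the formula $M_g=\tfrac{1}{a_1(g)}A_5(g)^{-1}$ quoted in Lemma~\ref{lem-similarity} (which, taken literally, is inconsistent with $a_5(g)=|\det A_5(g)|^{1/i_0}$ unless $a_5(g)=1$), but rather by comparing \emph{two} blocks of the conjugation identity. The $(A_5\text{-row},\,a_1\text{-column})$ block gives $M_g=\tfrac{1}{a_1(g)}A_5(g)^{T}$, while the $(\bv_{\tilde E}\text{-row},\,A_5\text{-column})$ block gives $M_g=a_9(g)A_5(g)^{-1}$; equating these yields $A_5(g)^{T}A_5(g)=a_1(g)a_9(g)\,\Idd=a_5(g)^2\,\Idd$, so $\tfrac{1}{a_5(g)}A_5(g)$ is orthogonal. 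You allude to this under ``the similarity computation that defined $M_g$,'' and it would strengthen the write-up to make this step explicit rather than citing the (slightly misprinted) formula from Lemma~\ref{lem-similarity}.
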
 

Finally, we show the ``splitting''.
\begin{proposition}[Splitting] \label{prop-decomposition}
Assume Hypothesis \ref{h-norm}. 
Suppose additionally the following\,{\rm :} 
\begin{itemize} 
\item Suppose that $a_{1}(g) \geq a_{5}(g), {a_{9}(g)}$ whenever $a_{1}(g)$ is the largest norm of the eigenvalues 
of the semisimple part  $\hat S(g)$ of $g$ for each $g \in \bGamma_{\tilde E}$. 
\item $K = \{k\} \ast K''$ a strict join, and $K^{o}/N_{K}$ is compact. 
\item A center of $\bGamma_{\tilde E}$ maps to $N_{K}$
going to a Zariski dense group of the virtual center of $\Aut(K)$.  
\end{itemize} 
Then $K''$ embeds projectively in the closure of $\Bd \torb$
invariant under $\bGamma_{\tilde E}$, and 
one can find a coordinate system so that for every $\CN(\vec v)$ and each element $g$ of 
$\bGamma_{\tilde E}$ is written so that
\begin{itemize}
\item $C_1(\vec v)=0, c_2({\vec v})=0$, and 
\item $C_1(g)=0$ and $c_2(g) = 0$.
\end{itemize}
\end{proposition}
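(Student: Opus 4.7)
The plan is to exploit the three hypotheses --- the cone decomposition $K = \{k\} \ast K''$, the Zariski density of the image of the center of $\bGamma_{\tilde E}$ in the virtual center of $\Aut(K)$, and the eigenvalue inequality $a_1(g) \ge a_5(g), a_9(g)$ when $a_1(g)$ is the top norm of the eigenvalues of $\hat S(g)$ --- to block-diagonalize the $\bGamma_{\tilde E}$-action and read off the required vanishing statements. First I would identify, inside the semisimple block $\hat S(g)$, the $1$-dimensional subspace corresponding to the cone vertex $k$: every element of $N_K$ fixes $k$, so this direction is distinguished, and the eigenvalue hypothesis forces its eigenvalue to dominate on the semisimple part whenever $a_1(g)$ does. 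After a change of coordinates inside the $S(g)$-block (preserving the normalizations of Lemma \ref{lem-conedecomp1}), the vertex $k$ becomes the first coordinate and the complementary $(n-i_0-2)$-dimensional subspace carries $K''$.

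Next I would bring in a central element. Since the virtual center of $\Aut(\{k\} \ast K'')$ contains a one-parameter group of diagonal scalings with distinct scalars on $\langle k \rangle$ and on $\mathrm{span}(K'')$, the Zariski density hypothesis yields a central $z \in \bGamma_{\tilde E}$ whose semisimple action has eigenvalue $\alpha$ on $k$ and $\beta \ne \alpha$ on $K''$, and (choosing $z$ generically within the central subgroup) also distinct from $a_1(z), a_5(z), a_9(z)$. Commutation $gz = zg$ then compares matrix entries of $g$ across the $z$-eigenspace decomposition: an entry connecting the $K''$-eigenspace (eigenvalue $\beta$) to a different eigenspace must vanish. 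Reading this off in the standard form \eqref{eqn-matstd}, the $K''$-components of $C_1(g)$ and $c_2(g)$ are zero. For the remaining $\langle k \rangle$-components, the eigenvalue hypothesis $a_1(g) \ge a_5(g), a_9(g)$ combined with commutation (now involving $\alpha$ against $a_5(z), a_9(z)$) eliminates them as well, so $C_1(g) = 0$ and $c_2(g) = 0$ for every $g \in \bGamma_{\tilde E}$. Applying the same commutation to $\CN(\vec v) \in \CN$, which is normalized by $\bGamma_{\tilde E}$ and hence centralized by $z$ up to the scalar factor $\mu_z$, forces $c_2(\vec v) = 0$; alternatively, equation \eqref{eqn-conedecomp1} simplifies to $a_9(g) c_2(\vec v) = c_2(\mu_g \vec v O_5(g)^{-1}) S(g)$, and iterating a $g$ with $\mu_g \ne 1$ shows a nonzero $c_2(\vec v)$ would contradict the polynomial dependence of $c_2(\CN(\vec v)) = \vec c_{i_0+1}(\vec v)$ on $\vec v$.

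Finally, to realize $K''$ as a projective subdomain of $\Bd \torb$, I would run the dynamics of $z$: for a generic $x \in \torb$ the sequence $z^n(x)$ concentrates projectively on the $\beta$-eigenspace as $n \to \infty$, so its accumulation set lies in the projective subspace carrying $K''$, and under $\bGamma_{\tilde E}$ sweeps out an open subset of that projective copy of $K''$. Proper convexity of $\torb$ forces the entire projective copy of $K''$ to lie in $\Bd \torb$ (by Lemma \ref{lem-simplexbd}), and $\bGamma_{\tilde E}$-invariance is immediate because $K''$ is determined intrinsically by $N_K$. The main obstacle will be the simultaneous compatibility of coordinate refinements: the coordinates fixed in Lemma \ref{lem-conedecomp1} already trivialize $C_1(\vec v)$ and set $s_1(g) = s_2(g) = 0$, and any further change within the $S(g)$-block to diagonalize $z$ must preserve these as well as the standard form of $\CN$; a secondary subtlety is justifying the generic choice of $z$ so that all eigenvalue mismatches needed above hold simultaneously, which uses the Zariski density precisely to rule out algebraic coincidences among $\alpha, \beta, a_1(z), a_5(z), a_9(z)$.
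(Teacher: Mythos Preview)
The paper does not contain a proof of this proposition. This is a survey/announcement article (see the abstract: ``The purpose of this article is to announce these results''), and Section~\ref{sec-notprop} states the sequence of results Lemma~\ref{lem-similarity}, Lemma~\ref{lem-conedecomp1}, Lemma~\ref{lem-matrix}, and Proposition~\ref{prop-decomposition} without proofs, deferring them to \cite{End3}. The only indication of the argument's shape is the sketch after Theorem~\ref{thm-thirdmain}, which says the block form is obtained ``by computations involving the normalization conditions'' leading to Proposition~\ref{prop-decomposition}; this is consistent with, but far less specific than, your outline.

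Given that, I can only comment on internal plausibility. Your strategy of using a central element $z$ with distinct eigenvalues on $\langle k\rangle$ and on $\mathrm{span}(K'')$, then reading off vanishing of the $C_1(g)$ and $c_2(g)$ blocks from $gz=zg$, is a reasonable way to exploit the Zariski-density hypothesis and is in the spirit of the normalization lemmas quoted. One point to watch: you assert that commutation with $z$ also kills the $\langle k\rangle$-components of $C_1(g)$ and $c_2(g)$ by invoking the inequality $a_1(g)\ge a_5(g),a_9(g)$, but an eigenvalue \emph{inequality} does not by itself force a matrix entry to vanish under commutation --- you need an eigenvalue \emph{inequality between the eigenvalues of $z$} (namely $\alpha\ne a_5(z),a_9(z)$), which you do arrange via the generic choice of $z$, so the appeal to the $a_1(g)\ge a_5(g),a_9(g)$ hypothesis in that step is not doing the work you claim. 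That hypothesis is more likely needed elsewhere, for instance to guarantee that the $\langle k\rangle$-direction really is the dominant direction so that the dynamical argument locating $K''$ in $\Bd\torb$ goes through, or to control the interaction between the $S(g)$-block and the lower-right block when $\mu_g\ne 1$. Since the paper gives no proof here, there is nothing further to compare against.
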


As a consequence, we obtain 
\begin{equation} \label{eqn-form1}
g = \left( \begin{array}{cccc} 
 S(g) & 0 & 0 & 0 \\ 
 0 & a_1(g) & 0 & 0 \\ 
 0& a_4(g) & a_5(g) O_5(g) & 0  \\
0 & a_7(g) & a_8(g) & a_9(g) 
\end{array} \right), 
\end{equation} 
\begin{equation} \label{eqn-form2}
 \CN(\vec{v}) = \left( \begin{array}{cccc} 
 \Idd & 0 & 0 & 0 \\ 
 0 &    1 & 0 & 0 \\ 
0 & \vec{v}^T & \Idd & 0 \\ 
0 & \frac{1}{2} ||\vec{v}||^2 & \vec{v} & 1 
 \end{array} \right). 
 \end{equation}

Thus,  when $\mu_{g}=1$ for all $g \in \bGamma_{\tilde E}$, by 
taking a finite index subgroup of $\bGamma_{\tilde E}$, 
we conclude that each $g \in \bGamma_{\tilde E} $ has the form 
\begin{equation} \label{eqn-formgii}
\newcommand*{\temp}{\multicolumn{1}{r|}{}}
\left( \begin{array}{ccccccc} 
S(g) & \temp & 0 & \temp & 0 & \temp & 0 \\ 
 \cline{1-7}
0 &\temp & \lambda_{\bv_{\tilde E}}(g) &\temp & 0 &\temp & 0 \\ 
 \cline{1-7}
0 &\temp & \lambda_{\bv_{\tilde E}}(g)\vec{v}^T_g &\temp & \lambda_{\bv_{\tilde E}}(g) O_5(g) &\temp & 0 \\ 
 \cline{1-7}
0 &\temp & a_7(g) &\temp & \lambda_{\bv_{\tilde E}}(g) \vec{v}_g O_5(g) &\temp & \lambda_{\bv_{\tilde E}}(g)
\end{array} 
\right) 
\end{equation}
for $\vec{v}_{g} \in \bR^{i_{0}}$,
an $(n-i_{0}-1)\times (n-i_{0}-1)$-matrix $S(g)$, and
an orthogonal $i_{0}\times i_{0}$-matrix $O_{5}(g)$. 

\subsection{Joined R-ends and quasi-joined R-ends} \label{subsec-join}

We will now discuss about joins and their generalizations in depth in this subsection. 


\begin{hypothesis}[$\mu_{g}\equiv1$]\label{h-qjoin} 
Let $G$ be a p-end fundamental group. 
We continue to assume Hypothesis \ref{h-norm} for $G$.  
\begin{itemize} 
\item Every $g \in \Gamma \ra M_g$ is 
so that $M_g$ is in a fixed compact group $O(i_0)$. Thus, $\mu_g = 1$ identically. 
\item $G$ acts on the subspace $\SI^{i_0}_\infty$ containing 
$\bv_{\tilde E}$ and the properly convex 
domain $K_{m_0}''$ in the subspace $\SI^{n-i_0-2}$ 
disjoint from $\SI^{i_0}_\infty$. 
\item $\CN$ acts on these two subspaces fixing every points 
of $\SI^{n-i_0-2}$.
\end{itemize} 
\end{hypothesis} 




We assumed $\bv_{\tilde E}$ to have coordinates $[0, \dots, 0, 1]$.
$\SI^{n-i_0-2}$ contains the standard points $[e_i]$ for $i=1, \dots, n-i_0 -1$ 
and $\SI^{i_0+1}$ contains $[e_i]$ for $i=n-i_0, \dots, n+1$. 
Let $H$ be the open $n$-hemisphere defined by $x_{n-i_0} > 0$. Then by convexity of $U$, 
we can choose $H$ so that $K'' \subset H$ and $\SI^{i_0}_\infty \subset \clo(H)$. 

Assume Hypothesis \ref{h-qjoin}. 
Then elements of $\CN$ have the forms of equation \eqref{eqn-nilmatstd} with 
\[C_1(\vec{v})=0, c_2(\vec{v})=0 \hbox{ for all } \vec{v} \in \bR^{i_0}\] 
and the group $G$ of elements of forms of equation \eqref{eqn-formgii} with 
\[s_1(g) =0, s_2(g) = 0, C_1(g) = 0, \hbox{ and } c_2(g) = 0.\] 
We assume further that $O_5(g) = \Idd_{i_0}$. 


Again we recall the projection $\Pi_K: \SI^n - \SI^{i_0}_\infty \ra \SI^{n-i_0-1}$. 
$G$ has an induced action on $\SI^{n-i_0 -1}$ and
acts on a properly convex set $K''$ in $\SI^{n-i_0-1}$ so that 
$K$ equals a strict join $k* K''$ for $k$ corresponding to a great sphere $\SI^{i_0+1}_{k}$. 


\begin{figure}
\centering
\includegraphics[height=6cm]{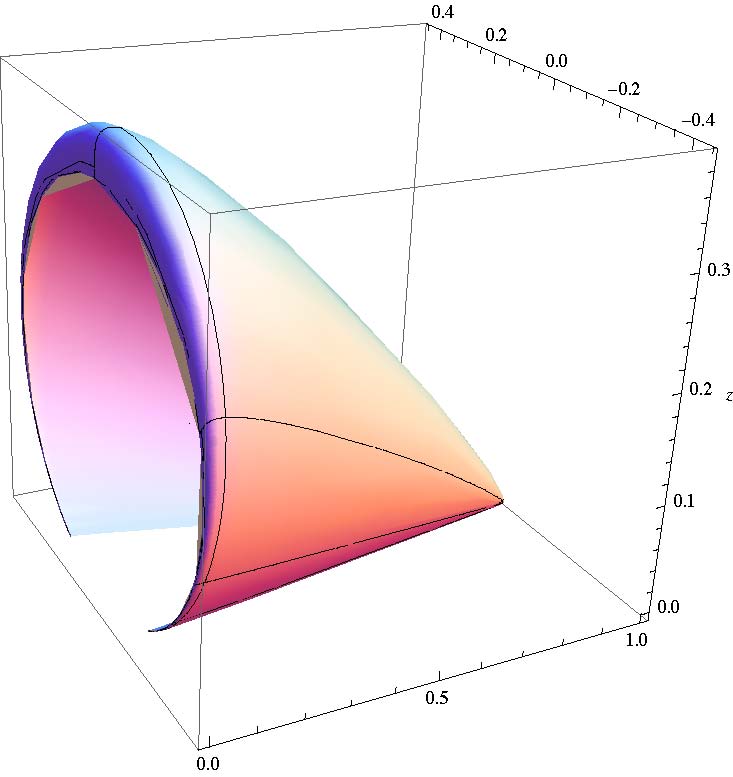}
\caption{A figure of a quasi-joined p-R-end neighborhood}
\label{fig:quasi-j}
\end{figure}

We define invariants from the form of equation \eqref{eqn-formgii}
\[\alpha_7(g):= \frac{a_7(g)}{\lambda_{\bv_{\tilde E}}(g)} - \frac{||\vec{v}_g||^2}{2} \]
for every $g \in G$. 
\[\alpha_7(g^n) = n \alpha_7(g) \hbox{ and }
\alpha_7(gh) = \alpha_7(g) + \alpha_7(h), \hbox{ whenever }g, h, gh \in G.\] 

Here $\alpha_7(g)$ is determined by factoring  
the matrix of $g$ into commuting matrices of form

\begin{multline}\label{eqn:kernel}
\newcommand*{\temp}{\multicolumn{1}{r|}{}}
\left( \begin{array}{ccccccc}
\Idd_{n-i_0-1} & \temp & 0 & \temp & 0 & \temp & 0 \\   
 \cline{1-7}
0                    &\temp & 1 & \temp & 0 & \temp & 0  \\ 
 \cline{1-7}
0                    & \temp &   0 &\temp & \Idd_{i_0} &\temp & 0 \\ 
 \cline{1-7}
0                    &\temp &  \alpha_7(g) &\temp & \vec{0} &\temp & 1 \\ 
\end{array} 
\right) \times \\   \newcommand*{\temp}{\multicolumn{1}{r|}{}}
\left( \begin{array}{ccccccc}
S_g & \temp & 0 & \temp & 0 &\temp & 0 \\ 
\cline{1-7}
0 & \temp & \lambda_{\bv_{\tilde E}}(g) & \temp & 0 & \temp & 0  \\ 
 \cline{1-7}
0& \temp & \lambda_{\bv_{\tilde E}}(g) \vec{v}_g &\temp & \lambda_{\bv_{\tilde E}}(g) O_5(g) &\temp & 0 \\ 
 \cline{1-7}
0& \temp & \lambda_{\bv_{\tilde E}}(g)  \frac{||\vec{v}||^2}{2}  
&\temp & \lambda_{\bv_{\tilde E}}(g) \vec{v}_g O_5(g) &\temp & \lambda_{\bv_{\tilde E}}(g) \\ 
\end{array} 
\right).
\end{multline}

\begin{remark} \label{rem:alpha7}
We give more explanation. 
Recall that the space of segments in a hemisphere $H^{i_0+1}$ with the vertices $\bv_{\tilde E}, \bv_{\tilde E-}$ 
forms an affine space $A^{i_{0}}$ one-dimension lower, and the group $\Aut(H^{i_0+1})_{\bv_{\tilde E}}$ of projective {automorphisms }
of the hemisphere fixing $\bv_{\tilde E}$ maps to $\Aff(A^{i_0})$ with {the kernel equal to 
the group of transformations of $(i_0+2)\times (i_0+2)$-matrix forms}
\renewcommand{\arraystretch}{1.2}
\begin{equation}\label{eqn:kernel2}
\newcommand*{\temp}{\multicolumn{1}{r|}{}}
\left( \begin{array}{ccccc} 
1 & \temp & 0 & \temp & 0  \\ 
 \cline{1-5}
0 &\temp & \Idd_{i_0} &\temp & 0 \\ 
 \cline{1-5}
b &\temp & \vec{0} &\temp & 1 \\ 
\end{array} 
\right)
\end{equation}
where $\bv_{\tilde E}$ is given coordinates $[0, 0, \dots, 1]$ and 
a center point of $H^{i_0+1}_l$ the coordinates $[1, 0, \dots, 0]$. 
In other words the transformations are of form 
\begin{align}\label{eqn:temp}
\left[
\begin{array}{c}
 1\\
 x_1  \\
 \vdots \\ 
 x_{i_0} \\ 
 x_{i_0+1}   
\end{array}
\right]
\mapsto 
\left[
\begin{array}{c}
 1 \\
 x_1\\
 \vdots \\ 
 x_{i_0} \\ 
 x_{i_0+1}+b
\end{array}
\right]
\end{align}
and hence $b$ determines the kernel element. 
Hence $\alpha_7(g)$ indicates the translation towards $\bv_{\tilde E}=[0,\dots, 1]$. 
\end{remark}

We define $G_+$ to be a subset of $G$ consisting of elements $g$ so that 
the largest norm $\lambda_1(g)$ of the eigenvalue occurs at the vertex $k$. 
We also assume that $\lambda_1(g) = \lambda_{\bv_{\tilde E}}(g)$
with all other norms of the eigenvalues occurring at $K''$ is strictly less than $\lambda_{\bv_{\tilde E}}(g)$.  
The second largest norm $\lambda_2(g)$ of the eigenvalue occurs at the complementary subspace $K''$ of $k$ in $K$.
Thus, $G_+$ is a semigroup.      
The condition that $\alpha_7(g) \geq 0$ for $g \in G_+$ is said to be the 
{\em positive translation condition}.


Again, we define \[\mu_7(g) : = \frac{\alpha_7(g)}{\log\frac{\lambda_{\bv_{\tilde E}}(g)}{\lambda_2(g)}}\] where 
$\lambda_2(g)$ denote the second largest norm of the  eigenvalues of $g$ and 
we restrict $g \in G_+$. 
The condition $\mu_7(g) > C_0, g \in  G_+$ for a uniform constant $C_0$
is called the {\em uniform positive translation condition}. 







 
 The assumptions below are just Hypotheses \ref{h-norm} and \ref{h-qjoin}. 
We fully state for a change. 
 
\begin{proposition}[Quasi-joins] \label{prop-qjoin}
Let $\Sigma_{\tilde E}$ be the end orbifold of an NPNC R-end $\tilde E$ of a
strongly tame properly convex $n$-orbifold $\orb$. 
Let $G$ be the p-end fundamental group. 
Let $\tilde E$ be an NPNC p-R-end
and $G$ and $\mathcal N$ acts on a p-end-neighborhood 
$U$ fixing $\bv_{\tilde E}$. Let $K, K'',$ and $\SI^{i_0}_\infty$ be as above.
We assume that $K^o/G$ is compact, $K= K''* k$ in $\SI^{n-i_0}$ with $k$ 
corresponding to a great sphere $\SI^{i_0+1}_{k}$ under the projection $\Pi_K$. 
Assume that 
\begin{itemize} 
\item $G$ satisfies the {transverse weak middle eigenvalue} condition. 
\item $\mu_{g} = 1$ for all $g \in G$. 
\item Elements of $G$ and $\CN$ are of form of equations \eqref{eqn-form1} and \eqref{eqn-form2}.
with \[C_1(\vec{v}) = 0, c_2(\vec{v})=0, C_1(g) = 0, c_2(g) =0\]
for every $\vec{v} \in \bR^{i_0}$ and $g \in G$.
\item $G$ normalizes $\CN$, {$\CN$ acts on $U$, }
and each leaf of $\mathcal{F}_{\tilde E}$ of $\tilde \Sigma_{\tilde E}$. 
\end{itemize} 
Then
\begin{itemize} 
\item[(i)] The condition $\alpha_7 \geq 0$ is a necessary condition 
that $G$ acts on a properly convex domain in $H$.
\item[(ii)] The uniform positive translation condition is equivalent to the existence of
a properly convex p-end-neighborhood $U'$ 
satisfying \[\clo(U')\cap \SI^{i_0+1}_k =\{\bv_{\tilde E}\}.\]
\item[(iii)] $\alpha_7$ is identically zero if and only if $U$ is a join and $U$ is properly convex. 
\end{itemize}
\end{proposition}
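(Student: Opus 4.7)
The plan is to exploit the matrix factorization \eqref{eqn:kernel}, which writes each $g \in G$ as a commuting product of a pure ``translation toward $\bv_{\tilde E}$'' of magnitude $\alpha_7(g)$ and a ``main part'' that is block-diagonal with respect to the direct sum of the subspace carrying $\SI^{i_0+1}_k$ and a complementary subspace carrying $K''$. All three conclusions will reduce to controlling how this translation factor interacts with the hyperbolic dynamics on $K$ obtained from the weak middle eigenvalue condition and cocompactness of $K^o/N_K$.

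For (iii), I would start with the easier direction: if $\alpha_7 \equiv 0$, then \eqref{eqn:kernel} says each $g$ coincides with its block-diagonal main part, so $G$ preserves both $\SI^{i_0+1}_k$ and a complementary subspace containing $K''$. The group $\CN$ preserves each leaf of $\mathcal{F}_{\tilde E}$ and acts on the $\SI^{i_0+1}_k$-component, while $G$ acts properly on $K''$; taking the join of their respective invariant properly convex orbit closures produces a $G$-invariant properly convex join p-end neighborhood, which must then coincide with $U$ up to taking intersections. Conversely, if $U$ is a properly convex join, then $G$ preserves the join, hence the corresponding direct-sum decomposition, and an inspection of the matrix shape in this decomposition forces the lower-left entry $a_7(g) - \lambda_{\bv_{\tilde E}}(g)\|\vec v_g\|^2/2$ to vanish for every $g$. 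For (i), I would argue by contradiction: suppose $G$ preserves a properly convex $\Omega \subset H$ but $\alpha_7(g_0) < 0$ for some $g_0 \in G_+$, so $\lambda_{\bv_{\tilde E}}(g_0)$ strictly dominates. Using $\alpha_7(g_0^n) = n\alpha_7(g_0) \to -\infty$ and the fact that the main part alone pulls a generic $p \in \Omega$ to $\bv_{\tilde E}$, the combined iteration pushes $g_0^n(p)$ past $\bv_{\tilde E}$ and accumulates at the antipode $\bv_{\tilde E-}$ inside $\SI^{i_0+1}_k$; the pair $\bv_{\tilde E}, \bv_{\tilde E-}$ then both lie in $\clo(\Omega)$, contradicting proper convexity.

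Part (ii) is the core. For the forward direction, I would construct $U'$ as a sublevel set $\{F < 0\}$ of a $G$-quasi-invariant convex function built by combining a Hilbert-type potential on $K''$ with an affine form $\ell$ in the affine chart $H$ that measures displacement along the $\bv_{\tilde E}$-direction of $\SI^{i_0+1}_k$. The factorization \eqref{eqn:kernel}, together with $\mu_g \equiv 1$ and the form \eqref{eqn-formgii}, shows that $\ell \circ g$ equals $\ell$ scaled by the ratio $\lambda_{\bv_{\tilde E}}(g)/\lambda_2(g)$ plus an error controlled by $\alpha_7(g) - \|\vec v_g\|^2/2$; the uniform positive translation hypothesis $\alpha_7(g) \geq C_0 \log(\lambda_{\bv_{\tilde E}}(g)/\lambda_2(g))$ for $g \in G_+$ is precisely what is needed to make this error dominate, keeping $F$ bounded above on $G_+$-orbits and producing a $G$-invariant properly convex sublevel set which by construction touches $\SI^{i_0+1}_k$ only at $\bv_{\tilde E}$. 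For the converse, given such $U'$ I would compute Busemann-type functions from $\bv_{\tilde E}$ along orbits $g^n(x)$, $g \in G_+$: the separation $\clo(U') \cap \SI^{i_0+1}_k = \{\bv_{\tilde E}\}$ forces the displacement toward $\bv_{\tilde E}$ to grow at least linearly in $\log(\lambda_{\bv_{\tilde E}}(g)/\lambda_2(g))$, which unwinds to exactly the uniform lower bound on $\alpha_7(g)$.

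The main obstacle will be the construction in (ii): guessing the right convex function is not hard, but controlling the bookkeeping is, because the unipotent translations $\CN(\vec v_g)$ contribute a $\|\vec v_g\|^2/2$ term in the bottom-left entry of $g$ that competes directly with $\alpha_7(g)$. To push the estimate through I will need to bound $\|\vec v_g\|$ by eigenvalue data along sequences in $G_+$, which in turn should follow from combining the cocompactness of $K^o/N_K$ (to control the ``hyperbolic'' factor) with the normalization of $\CN$ by $G$ in the coordinates of \eqref{eqn-O5coor} (to control the unipotent factor). Showing that this bound is uniform enough to let the uniform positive translation hypothesis dominate is the delicate point of the whole argument.
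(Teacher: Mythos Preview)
The paper does not contain a proof of Proposition~\ref{prop-qjoin}. This article is explicitly a survey announcing results (see the abstract and the opening of Section~\ref{sec-endth}: ``We will just state results from \cite{End2} without proofs''); Proposition~\ref{prop-qjoin} is stated in Section~\ref{subsec-join} and then immediately used to formulate Definition~\ref{def-qjoin} and to sketch the idea behind Theorem~\ref{thm-thirdmain}, but no argument for it appears here. The actual proof lives in the companion paper \cite{End3}. Consequently there is no ``paper's own proof'' in this document to compare your proposal against.

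That said, a few comments on your outline. Your approach to (iii) via the block-diagonal form when $\alpha_7\equiv 0$ is the natural one and should go through. Your argument for (i) has the right instinct but is imprecise at the key step: for $g_0\in G_+$ the entire lower $(i_0+2)\times(i_0+2)$ block in \eqref{eqn-formgii} has all eigenvalue norms equal to $\lambda_{\bv_{\tilde E}}(g_0)$, so a generic orbit $g_0^n(p)$ with $p\notin \SI^{i_0+1}_k$ first gets attracted to $\SI^{i_0+1}_k$ by eigenvalue dominance, and only then does the unipotent drift from $\alpha_7$ govern where on that sphere the limit lands. You need to track this two-scale limit carefully; the claim ``pushes past $\bv_{\tilde E}$ to the antipode'' requires showing the $\alpha_7$-drift survives the projective rescaling, and the $\|\vec v_{g_0}\|^2/2$ term in $a_7$ competes with $\alpha_7$ with the \emph{same} sign convention, so the cancellation you need is delicate. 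For (ii), your sublevel-set construction is plausible in spirit, but you correctly identify the obstacle: bounding $\|\vec v_g\|$ uniformly over $G_+$ in terms of $\log(\lambda_{\bv_{\tilde E}}(g)/\lambda_2(g))$ is the crux, and nothing in your sketch yet explains why cocompactness of $K^o/N_K$ plus the normalization relation gives this. Without that bound the uniform positive translation condition cannot be shown to dominate, and the argument stalls.
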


\begin{definition}\label{def-qjoin}
If $\tilde E$ satisfies the case (ii) of Proposition \ref{prop-qjoin}, then
$\tilde E$ is said to be a {\em quasi-joined p-R-end} and $G$ now is called a {\em quasi-joined p-end group}.
An end with an end neighborhood that is covered by a p-end neighborhood of 
such a p-R-ends is also called a {\em quasi-joined R-end}. 
Similarly, a joined R-end is given by the case (iii). 
\end{definition}



\subsection{The classification of the NPNC-ends} \label{sub:NPNC}

We will show using Proposition \ref{prop-qjoin} that NPNC R-ends satisfying the { transverse weak middle 
eigenvalue} conditions are quasi-joins. That is, Theorem \ref{thm-thirdmain} is shown. 
These quasi-joined R-ends do not satisfy the uniform middle eigenvalue condition essentially because of 
the existence of the unipotent group. 

The following theorem requires a Zariski density condition for the center.
We question whether this assumption can be dropped.
If the fundamental group is virtually abelian, this will be true. 


\begin{theorem}[Theorem 1.1 of \cite{End3}]\label{thm-thirdmain} 
Let $\mathcal{O}$ be a strongly tame  properly convex real projective orbifold.
Assume that the holonomy group of $\mathcal{O}$ is strongly irreducible.
\begin{itemize}
\item Let $\tilde E$ be an NPNC p-R-end. 
\item Let $K$ be the convex $n-i_{0}-1$-dimensional domain that is the space of 
$i_{0}$-dimensional  affine spaces foliating 
the universal cover $\tilde \Sigma_{\tilde E}$ of the end orbifold $\Sigma_{\tilde E}$. 
\end{itemize} 
We assume that 
\begin{itemize}
\item a virtual center of $\bGamma_{\tilde E}$ goes to a Zariski dense subgroup of  
the virtual center of the group $\Aut(K)$ of projective automorphisms of $K$ and 
\item the p-end fundamental group $\pi_{1}(\tilde E)$ satisfies the {transverse weak middle eigenvalue} condition
for NPNC-ends. 
\end{itemize}
Then $\tilde E$ is a {quasi-joined} p-R-end. 
\end{theorem}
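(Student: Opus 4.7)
My plan is to verify the hypotheses of the ``Splitting'' Proposition \ref{prop-decomposition} and the ``Quasi-joins'' Proposition \ref{prop-qjoin}, applied in succession.

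First I would invoke Theorem \ref{thm-folaff} to set the stage: $\tilde{\Sigma}_{\tilde E}$ is foliated by complete affine $i_0$-dimensional leaves with common ideal boundary $\SI^{i_0-1}_\infty$, and we obtain the exact sequence
\[ 1 \to N \to \bGamma_{\tilde E} \xrightarrow{\Pi_K^*} N_K \to 1 \]
where $N_K \subset \Aut(K)$ acts cocompactly on $K^o$. The next step is to produce the partial parabolic group $\CN$ required by Hypothesis \ref{h-norm}. Each complete affine leaf of the foliation, together with $\bv_{\tilde E}$, spans an $(i_0{+}1)$-hemisphere, and on these hemispheres the kernel $N$ acts as an $i_0$-dimensional unipotent group fixing $\bv_{\tilde E}$ and $\SI^{i_0}_\infty$ pointwise. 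I would show that $N$ is virtually contained in (or at least normalized by) a standard $i_0$-dimensional partial parabolic group $\CN$ of the form \eqref{eqn-nilmatstd} acting simply transitively on each leaf; this uses the affine-cusp analysis (as in the proof of Theorem \ref{thm-comphoro}/Lemma \ref{lem-unithoro}) inside each hemisphere. Since $N$ is normal in $\bGamma_{\tilde E}$, $\bGamma_{\tilde E}$ normalizes $\CN$, and by restricting to a $\CN$-saturated p-end neighborhood we may arrange that $\CN$ and $\bGamma_{\tilde E}$ both preserve the chosen $U$. Simultaneous conjugation puts all elements in the standard matrix form \eqref{eqn-matstd}.

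With Hypothesis \ref{h-norm} secured, Lemmas \ref{lem-similarity}--\ref{lem-matrix} give the factorization $A_5(g) = a_5(g) O_5(g)$ with $O_5(g)$ orthogonal, the relation $a_5(g)^2 = a_1(g)a_9(g)$, and the consistency identities between $a_4, a_8, O_5$. To apply Proposition \ref{prop-decomposition}, I need (i) that $a_1(g)$ dominates $a_5(g), a_9(g)$ whenever $a_1(g)$ is the largest norm on $\hat S(g)$, and (ii) that $K$ decomposes as a strict join $\{k\} \ast K''$ with the center of $\bGamma_{\tilde E}$ Zariski dense in the virtual center of $\Aut(K)$. For (i), the weak middle eigenvalue condition for NPCC-ends (Definition \ref{defn:weakmec}) combined with Proposition \ref{prop-eigSI} gives $\lambda_1(g) \ge \lambda(g) \ge \lambda'(g) \ge \lambda_{n+1}(g)$, from which $a_1(g) = \lambda_{\bv_{\tilde E}}(g)$ is forced into the dominating position as required. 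For (ii), the hypothesized Zariski density of the virtual center of $\bGamma_{\tilde E}$ inside the virtual center of $\Aut(K)$, together with Benoist's decomposition (Proposition \ref{prop-Ben2}) applied to $K$, forces the join factor corresponding to the unipotent direction to be a single point $k$, yielding $K = \{k\}\ast K''$. Then Proposition \ref{prop-decomposition} kills $C_1, c_2$ in a common coordinate system and embeds $K''$ projectively in $\Bd \torb$ as a $\bGamma_{\tilde E}$-invariant set.

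Next I would verify Hypothesis \ref{h-qjoin}, the identity $\mu_g \equiv 1$. Using equation \eqref{eqn-conedecomp1} with $C_1=0, c_2=0$ together with the $N_K$-cocompactness on $K^o$ and the equality $a_5^2 = a_1 a_9$, one sees that any $g$ with $\mu_g \neq 1$ would give a sequence whose action on a leaf pushes points out of the properly convex set $\clo(\torb)$, or violates the weak middle eigenvalue inequality. This reduces each element to the form \eqref{eqn-formgii}, and the induced map $g \mapsto M_g = O_5(g)^{-1}$ lands in the compact group $O(i_0)$. Finally, with Hypothesis \ref{h-qjoin} in hand, I apply Proposition \ref{prop-qjoin}. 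Proper convexity of $\torb$ forces $\alpha_7(g) \ge 0$ on the semigroup $G_+$ (case (i)); moreover, since $\torb$ is bounded away from $\SI^{i_0+1}_k$ except at $\bv_{\tilde E}$, the translation invariant $\alpha_7(g)$ must grow at least linearly with $\log(\lambda_{\bv_{\tilde E}}(g)/\lambda_2(g))$, giving the uniform positive translation condition. By case (ii) of Proposition \ref{prop-qjoin} this produces a properly convex p-end neighborhood $U'$ with $\clo(U')\cap \SI^{i_0+1}_k = \{\bv_{\tilde E}\}$, i.e., $\tilde E$ is quasi-joined in the sense of Definition \ref{def-qjoin}.

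The hard part will be the two places where the Zariski-density hypothesis does real work: first in forcing the one-dimensional cone factor $\{k\}$ in $K = \{k\}\ast K''$ so Proposition \ref{prop-decomposition} applies, and second in upgrading the a priori scalar ratios (forcing $\mu_g = 1$ uniformly) so that the unipotent direction is aligned with $\bv_{\tilde E}$. Establishing the uniform positive translation inequality $\mu_7(g) > C_0$ from the plain weak middle eigenvalue hypothesis — rather than just $\alpha_7 \ge 0$ — is the other delicate step, and requires using the cocompact $N_K$-action on $K''$ to trade word length for $\log$-eigenvalue-ratios uniformly over $G_+$.
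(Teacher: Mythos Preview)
Your outline captures the broad architecture of the proof in the discrete case, but it has two genuine gaps relative to the paper's argument.

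\textbf{The discrete versus non-discrete dichotomy for $N_K$.} You assume tacitly that the kernel $N$ acts as an $i_0$-dimensional unipotent group on each leaf and that from this one can directly extract the partial parabolic group $\CN$ via the horoball analysis of Theorem~\ref{thm-comphoro} and Lemma~\ref{lem-unithoro}. But that argument requires the fibers $\Pi_K^{-1}(x)$ to cover \emph{compact} $(i_0{+}1)$-orbifolds, which is only guaranteed when $N_K$ is discrete (so that $N$ is a lattice in the fibre direction). When $N_K$ is not discrete, the paper proceeds entirely differently: it modifies the transverse Hilbert metric on $K^o$ to a Riemannian one so that the foliation becomes Riemannian, invokes Molino's theory to understand leaf closures, uses Proposition~\ref{prop-eigSI} to show each leaf has polynomial growth, and then applies Carri\`ere's work to deduce that the identity component of $\overline{N_K}$ is abelian and the leaf-closure groups are solvable. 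Only after passing to a syndetic hull does one recover a normal $i_0$-dimensional cusp group $\CN$ acting transitively on each leaf. Your plan has no mechanism for producing $\CN$ in this case, and the horoball argument you cite will not apply.

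\textbf{Ruling out the joined case.} At the end you assert that proper convexity forces $\alpha_7$ to grow linearly with $\log(\lambda_{\bv_{\tilde E}}(g)/\lambda_2(g))$, yielding the uniform positive translation condition and hence case~(ii) of Proposition~\ref{prop-qjoin}. But case~(iii) of that same proposition says that $\alpha_7\equiv 0$ is perfectly compatible with proper convexity of $U$---indeed, that is exactly the joined case. So proper convexity alone cannot rule out $\alpha_7\equiv 0$. The paper instead shows that if $\tilde E$ were a joined end, then by analysing limit actions of certain elements one forces the holonomy of $\orb$ to be virtually reducible, contradicting the hypothesis of strong irreducibility. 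This is where that global hypothesis is used, and your proposal does not invoke it.

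In short, your plan is a reasonable sketch of the discrete-$N_K$ path up through the splitting, but it omits the Molino--Carri\`ere machinery needed in the non-discrete case and does not explain how the joined alternative is eliminated.
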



We will discuss the idea of the proof: 
First we discuss the case when $N_K$ is discrete. Here, $N$ is virtually abelian 
and is conjugate to a discrete cocompact subgroup of a subgroup of an $i_{0}$-dimensional  cusp group. 
Here, each fiber $\Pi^{-1}_{K}(x), x \in K^{o}$ covers a compact $(i_{0}+1)$-dimensional orbifold.
The fibers have complete-affine structures of an affine space of dimension $i_{0}+1$. 
By {the proper-convexity} of $\torb$,  the inequality of Proposition \ref{prop-eigSI} and  
Theorem \ref{thm-mainaffine} imply that the fibers are $i_{0}+1$-dimensional horoballs. 
Hence, by taking a Zariski closure and Malcev's results, 
there exists an $i_{0}$-dimensional partial parabolic group as described by $\CN$ in equation \eqref{eqn-nilmatstd}.
Then $\bGamma_{\tilde E}$ then virtually normalizes $\CN$. 
By computations involving the normalization conditions, 
we show that the above exact sequence is in a block form virtually
by Proposition \ref{prop-decomposition},
and we show that 
the p-R-ends are {joined p-R-ends or quasi-joined p-R-ends.} 

We discuss the case when $N_K$ is not discrete. Here, there is a foliation 
by $i_{0}$-dimensional complete-affine spaces as above. 
The space of leaves has a transversal Hilbert metric induced from $K^{o}$. 
We can modify the Hilbert metric to a transversal Riemannian metric 
to make the foliation into a Riemannian foliation. (See \cite{End3}). 
The leaf closures are compact submanifolds $V_l$ 
by the theory of Molino \cite{Mol} on Riemannian foliations. These are fibers
in some singular fibrations. 
We use some estimate of Proposition \ref{prop-eigSI} to show that each leaf 
is of polynomial growth. 
This shows that the identity component of the closure of
$N_K$ is abelian and $\pi_1(V_l)$ for above fiber $V_{l}$ is solvable using the work of Carri\`ere \cite{Car}. 
One can then take the syndetic closure to 
obtain a bigger group that act transitively on each leaf. 
We find a normal $i_{0}$-dimensional cusp group acting on each leaf transitively. Then we show that 
the p-R-end also splits virtually by Proposition \ref{prop-decomposition}.

Finally, suppose that $\tilde E$ is a joined end. 
For both of these cases of $N_{K}$, we show that the orbifold has to be reducible
by considering the limit actions of some elements in the joined R-ends. 
This proves that the joined R-end does not exist, proving Theorem \ref{thm-thirdmain}. 



\begin{remark}\label{rem-otherv} 
We could replace the virtual center condition and the {transverse weak middle eigenvalue} condition 
of Theorem \ref{thm-thirdmain} 
with the weak uniform middle eigenvalue condition for NPNC-ends in \cite{End2}. 
This is to be expored also in \cite{End2}. 
\end{remark}



\subsection{Complete affine ends again}

\begin{corollary}[non-cusp complete-affine p-ends] \label{cor-caseiii}
Let $\orb$ be a strongly tame properly convex $n$-orbifold. 
Suppose that $\tilde E$ is a complete-affine p-R-end of its universal cover $\torb$ in $\SI^n$ or 
in $\bR P^n$. Let $\bv_{\tilde E} \in \SI^n$  be the p-end vertex with the p-end fundamental group 
$\bGamma_{\tilde E}$.  
Suppose that $\tilde E$ is not a cusp p-end. Then we can choose 
a different point as the p-end vertex for $\tilde E$ so that $\tilde E$ is a quasi-joined p-R-end with fiber homeomorphic 
to cells of dimension $n-2$.
\end{corollary}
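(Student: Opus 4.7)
The plan begins with Theorem \ref{thm-comphoro}: since $\tilde E$ is complete-affine but non-cuspidal, after passing to a finite-index subgroup I may assume $\bGamma_{\tilde E}$ is nilpotent, every $g\in\bGamma_{\tilde E}$ has at most two distinct norms of eigenvalues, and whenever two distinct norms occur, $\lambda_{\bv_{\tilde E}}(g)$ has multiplicity $1$ and the other norm $\lambda_{+}(g)$ has multiplicity $n$. The homomorphism $g\mapsto\log(\lambda_{+}(g)/\lambda_{\bv_{\tilde E}}(g))$ is nontrivial, so some $g_{0}\in\bGamma_{\tilde E}$ satisfies $\lambda_{+}(g_{0})>\lambda_{\bv_{\tilde E}}(g_{0})$.

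Next I would exploit the nilpotent structure. Taking the Zariski closure of $\bGamma_{\tilde E}$ in $\SL_{\pm}(n+1,\bR)$ produces a connected nilpotent algebraic group whose two real characters $\lambda_{+},\lambda_{\bv_{\tilde E}}$ yield a common generalized weight-space decomposition $\bR^{n+1}=V_{+}\oplus V_{\bv_{\tilde E}}$ with $\dim V_{+}=n$ and $V_{\bv_{\tilde E}}$ the line through $\bv_{\tilde E}$. Writing $\gamma|V_{+}=\lambda_{+}(\gamma)U(\gamma)$ with $U(\gamma)$ unipotent, Kolchin's theorem supplies a common fixed line $[\bv']\subset V_{+}$, and I declare $\bv_{\tilde E}^{\mathrm{new}}:=[\bv']$ to be the new p-end vertex. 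Iterating $g_{0}$ on any generic $x\in\torb$ forces $g_{0}^{k}(x)\to[\bv']$ projectively, since $[\bv']$ sits at the top of the $U(g_{0})$-invariant flag in $V_{+}$ and $\lambda_{+}(g_{0})$ strictly dominates; hence $\bv_{\tilde E}^{\mathrm{new}}\in\clo(\torb)$.

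I would then identify the new end structure. A $U$-invariant $(n-1)$-dimensional subspace $V_{+}^{\flat}\subset V_{+}$ containing $\bv'$ (extracted from the $U$-invariant flag) projects from $\bv_{\tilde E}^{\mathrm{new}}$ to an invariant great $(n-3)$-sphere $\SI^{n-3}_{\infty}\subset\SI^{n-1}_{\bv_{\tilde E}^{\mathrm{new}}}$, which serves as the common boundary of a foliation of $\tilde\Sigma_{\tilde E}^{\mathrm{new}}$ by $(n-2)$-dimensional complete-affine leaves; the transverse direction toward $\bv_{\tilde E}^{\mathrm{old}}$ and the $V_{+}/V_{+}^{\flat}$-direction organize the leaf space as a 1-dimensional properly convex segment $K=\{k\}*K''$ with $k=\bv_{\tilde E}^{\mathrm{old}}$ and $K''$ a point. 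Thus $\tilde E$ viewed from $\bv_{\tilde E}^{\mathrm{new}}$ is NPCC with fiber dimension $i_{0}=n-2$.

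Finally, rearranging the basis so that $\bv_{\tilde E}^{\mathrm{old}}$ occupies position $1$, $\bv_{\tilde E}^{\mathrm{new}}$ position $n+1$, and the middle basis vectors span $V_{+}^{\flat}/\langle\bv_{\tilde E}^{\mathrm{new}}\rangle$ together with a lift of the $V_{+}/V_{+}^{\flat}$-direction, each $\gamma$ assumes the block form \eqref{eqn-formgii} with $S(\gamma)=\lambda_{\bv_{\tilde E}^{\mathrm{old}}}(\gamma)$ and $\lambda_{\bv_{\tilde E}^{\mathrm{new}}}(\gamma)=\lambda_{+}(\gamma)$; the unipotent $U(\gamma)$ supplies $\vec v_{\gamma}$, $O_{5}(\gamma)$, and $a_{7}(\gamma)$. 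By Proposition \ref{prop-qjoin}(ii) the quasi-joined conclusion reduces to verifying the uniform positive translation condition for $\alpha_{7}$ on $G_{+}$ (the subsemigroup of case (d) elements). The main obstacle is this last verification: it must exploit the proper convexity of $\torb$ and the discreteness of $\bGamma_{\tilde E}$ to pin down the sign of $\alpha_{7}$ and to force $O_{5}(\gamma)$ to lie in a compact orthogonal group, paralleling the Similarity Lemma \ref{lem-similarity} and the splitting arguments in Section \ref{sec-notprop} (in particular Proposition \ref{prop-decomposition}), specialized to the degenerate case where $K''$ is a point.
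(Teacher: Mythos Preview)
Your outline has the right architecture---locate a second fixed point, reinterpret $\tilde E$ as an NPCC end from that vertex with $i_0=n-2$ and one-dimensional leaf space, then invoke the quasi-join machinery---but two of the load-bearing steps are not secured.

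First, the identification of the new vertex via Kolchin is shakier than it looks. You extract a common fixed line $[\bv']\subset V_+$ for the unipotent parts $U(\gamma)$ and then assert that $g_0^k(x)\to[\bv']$ for generic $x$. But Kolchin hands you the \emph{bottom} of an invariant flag, and the projective limit of $g_0^k(x)$ is governed by the \emph{largest} Jordan block of $U(g_0)$; if $U(g_0)$ has several Jordan blocks and $[\bv']$ sits in a small one, the orbit limit lands elsewhere. So you have not shown $[\bv']\in\clo(\torb)$, and even if it is there, you have not shown that a p-end neighborhood of $\tilde E$ is radial from $[\bv']$. The paper avoids this entirely by working geometrically: it builds a $Z$-invariant p-end neighborhood $U$ (intersecting translates over a compact fundamental domain of the Zariski closure $Z$), intersects with the boundary hyperspace $A$ to get a compact convex $Z$-set $U_A\subset\clo(U)$, and then proves $U_A$ is a \emph{segment} (Lemmas~\ref{lem-Ucpt} and~\ref{lem-Ben2}, the latter using the two-eigenvalue constraint and Benoist's decomposition). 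The other endpoint $q$ of that segment is the new vertex; it is in $\clo(U)$ by construction, and the radial structure from $q$ is then read off directly from the convex picture.

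Second, your endgame is to verify the hypotheses of Proposition~\ref{prop-qjoin} directly---orthogonality of $O_5$, existence of the normalized cusp group $\CN$, and the uniform positive translation condition---and you correctly flag this as the hard part. The paper takes a different and cleaner route: it shows that on each hyperplane $P$ through $q$ containing a leaf, $r_P(N)$ is a genuine cusp group (by applying Theorem~\ref{thm-affinehoro} to $U\cap P$), writes $N$ explicitly in parabolic coordinates, observes that the $N$-orbits in the original affine space $H=\tilde\Sigma_{\tilde E}$ are parallel paraboloids carrying an affine (Euclidean) metric, and notes that the transverse one-parameter group $g_t$ permutes these paraboloids isometrically. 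This yields a $Z$-invariant Euclidean metric on $U_q$, so by Bieberbach $\bGamma_{\tilde E}$ is virtually abelian. That collapses the virtual-center hypothesis of Theorem~\ref{thm-thirdmain} to a triviality (the leaf space $K$ is one-dimensional), and the quasi-join conclusion follows from that theorem rather than from a bare-hands check of $\alpha_7$.
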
 
\begin{proof}
We will use the terminology of the proof of Theorem \ref{thm-comphoro}. 
Theorem \ref{thm-comphoro} 
shows that $\bGamma_{\tilde E}$ is virtually nilpotent and with at most two norms of eigenvalues
for each element. 
By taking a finite-index subgroup, we assume that $\bGamma_{\tilde E}$ is nilpotent. 
Let $Z$ be the Zariski closure, a nilpotent Lie group. 
Since $\bGamma_{\tilde E}\cap Z$ is a cocompact lattice in $Z$, 
and $\bGamma_{\tilde E}$ has the virtual cohomological dimension $n-1$, 
it follows that $Z$ is $(n-1)$-dimensional.  
Then, $Z$ fixes $\bv_{\tilde E}$ and 
we have a homomorphism 
\[\lambda_{\bv_{\tilde E}}: Z \ni g \ra \lambda_{\bv_{\tilde E}}(g) \in \bR.\]
Let $N$ denote the kernel of the homomorphism. 
Then $N$ is virtually a unipotent Lie group of dimension $n-2$ by  the proof of Theorem \ref{thm-comphoro}. 

Also, $Z$ acts transitively on the complete affine space $\tilde \Sigma_{\tilde E}$
since $\bGamma_{\tilde E}$ acts cocompactly on it. 
We modify $U$  to 
\[ \bigcap_{g\in Z} g(U) = \bigcap_{g\in F}g(U).\] 
This is a properly convex open set.
Since $F$ is the compact fundamental domain in $Z$ under the action of $\bGamma_{\tilde E}$,
the modified $U$ is not empty. 
We may assume that $Z$ acts on a properly convex p-end neighborhood $U$ of $\tilde E$ 

If we have $n=2$, then $\bGamma_{\tilde E}$ is cyclic acting on a properly convex domain $U$
and we see that it is a quasi-hyperbolic group in the sense of \cite{cdcr2}. 
The end is a quasi-joined one with fiber dimension $0$.
Suppose $n > 2$. 

Let $A$ be a hyperspace containing $\bv_{\tilde E}$ in direction of $\Bd \Sigma_{\tilde E} = \SI^{n-2}\subset \SI^{n-1}_{\bv_{\tilde E}}$. 
Then $U_{A}:= A \cap \clo(U)$ is a properly convex compact set where $Z$ acts on.
By Lemma \ref{lem-Ucpt}, $U_{A}^{o}/Z$ is compact. 
By Lemma \ref{lem-Ben2}, $U_{A}$ is a properly convex segment. 

\begin{lemma}\label{lem-Ben2} 
Let a connected nilpotent Lie group $S$ act cocompactly on a properly convex open domain $J$
where each element has at most two eigenvalues. 
Then the dimension of the domain is $0$ or $1$.
\end{lemma}
\begin{proof} 
By Theorem 1.1 of Benoist \cite{Ben2},  $\clo(J)$ is a join of  the closures of the symmetric spaces
and points. Since each element of $Z$ has no more than two eigenvalues,
$\clo(J)$ cannot contain symmetric spaces of dimension $> 1$ as factors. 
Also, $\clo(J)$ is not a simplex of dimension $> 1$ similarly. 
\end{proof}

\begin{lemma} \label{lem-Ucpt} 
$U_{A}^{o}/Z$ is compact. 
\end{lemma} 
\begin{proof} 
Suppose that $\dim U_{A}^{o} = n-1$. 
If the stabilizer of $Z$ of a point of $U_{A}^{o}$ is trivial, 
then $Z$ acts transitively on $U_{A}^{o}$ and hence $U_{A}^{o}/Z$ is compact. 

Suppose that the stabilizer is not trivial. 
Then the stabilizer of $Z$ of a point of $U_{A}^{o}$ is a noncompact group
since $Z$ is a nilpotent Lie group.  Let $\langle g^{t} \rangle$ be a noncompact one-parameter stabilizer group. 
Then $\langle g^{t}\rangle $ acts trivially on the hyperspace $A$. Hence $\langle g^{t}\rangle $ is a one-parameter translation group. 
Thus, $\bigcup_{t\in \bR}g^{t}(U) \subset U$ is not properly convex. 
This is a contradiction. 

Suppose now that $\dim U_{A}^{o} = i < n-1$. 
Let $H$ denote the complete affine space $\tilde \Sigma_{\tilde E}$.
Let $L$ be an $(i+1)$-dimensional subspace containing $U_{A}$ meeting $A$ transversally. 
Let $l$ be the $i$-dimensional affine subspace of $H$ corresponding to $L$. 
Since $g(U_{A})=U_{A}, U_{A}\subset L, g(L)$ and $\dim L = i+1$, it follows that 
\[ g(L) \cap L = \langle U_{A} \rangle \hbox{ or } g(L) = L.\]
Since $\langle U_{A} \rangle \cap A = \emp$,  it follows that
\[g(l)= l \hbox{ or } g(l) \cap l = \emp.\] 
Since $Z$ acts transitively and freely on $H$, and $\dim l = i$, it follows that 
the subgroup $\hat Z := \{g \in Z| g(l) = l\}$
has the dimension $i$. 

Now $\hat Z$ acts on on $U_{A}^{o}$. As above, the stabilizer of $\hat Z$ of a point of $U_{A}^{o}$ is trivial
since $U \cap L $ is properly convex. 
Hence, $\hat Z$ acts transitively on $U_{A}^{o}$ since $\dim \hat Z = \dim U_{A}^{o}$,
and $U_{A}^{o}/\hat Z = U_{A}^{o}/Z$ is compact. 
\end{proof}

If $\dim U_{A} = 0$, then $U$ is a horospherical p-end neighborhood 
where $\bGamma_{\tilde E}$ is unimodular and cuspidal by Theorem \ref{thm-affinehoro}. 
Hence, $\dim U_{A} = 1$. 

Let $q$ denote the other end point of $U_{A}$ than $\bv_{\tilde E}$. 
Let $U_{q}:= R_{q}(U) \subset \SI^{n-1}_{q}$ denote the set of directions of segments from $q$ in $U$.
Since $U$ is convex, $U_{q}$ is a convex open domain. 
The space $U_{q}$ is diffeomorphic to $\Bd U \cap \torb$ by projecting $\Bd U \cap \torb \ra \SI^{n-1}_{q}$ by radial rays. 
$Z$ acts on $U_{q}$ cocompactly since $Z$ acts {so on $\Bd U \cap \torb$}. 

By Theorem \ref{thm-comphoro}, $U_{q}$ is not complete-affine since 
the norm $\lambda_{q}(g)$ for some $g \in \bGamma_{\tilde E}$ has multiplicity $n-1 \ne n$. 
By Lemma \ref{lem-Ben2}, $U_{q}$ is not properly convex since $\dim U_{q} = n-1 \geq 2$. 
Thus, $q$ is the p-end vertex of a NPNC-end. Since the associated semisimple part has only two eigenvalues, 
the properly convex leaf space $K$ is $1$-dimensional by Lemma \ref{lem-Ben2} and fibers have the dimension 
$n-2=n-1-1$. 
Therefore, $U_{q}$ is foliated by $n-2$-dimensional complete-affine spaces. 
The leaf space $K$ is a properly convex segment. 

Each leaf $l$ equals $L \cap \Bd U\cap \torb$ intersected with a subspace $L$ of dimension $n-1$ containing $q$ and $l$.
These subspaces meet at a codimension-$2$ great-sphere $S_{1}$ containing $q$.
Since the leaves are disjoint on $\Bd U \cap \torb$, we have $S_{1}\subset A$.
Since $Z$ acts on $U_{q}$ and $\lambda_{\bv_{\tilde E}}(g) = \lambda_{q}(g), g \in N$, 
the Lie group $N$ acts on each complete affine leaf transitively. 
$N$ is a nilpotent Lie group since the elements are unimodular. 
We can apply Theorem \ref{thm-affinehoro} to the hyperspace $P$ containing the leaves
with a cocompact subgroup of $N$ acting on it.  
As $U \cap P$ is properly convex, $r_{P}(N)$ {is an $n-2$-dimensional cusp} group. 

Let $x_{1}, \dots, x_{n+1}$ be the coordinates of $\bR^{n+1}$.
Now give coordinates so that $q = [0, 0, \dots, 1]$ and $\bv_{\tilde E} = [1, 0, \dots, 0]$. 
Since these are fixed points, 
we obtain that elements of $N$ can be put into forms: 
\begin{equation} 
 N(\vec{v}):= \left( \begin{array}{cccc} 
 1 & 0 & 0 & 0 \\ 
 0 &    1 & 0 & 0 \\ 
0 & \vec{v}^T & \Idd & 0 \\ 
0 & \frac{1}{2} ||\vec{v}||^2 & \vec{v} & 1 
 \end{array} \right) \hbox{ for } \vec{v} \in \bR^{n-2}.
 \end{equation} 

Now, $\bGamma_{\tilde E}$ satisfies the {transverse weak middle eigenvalue} condition with respect to $q$
since $\bGamma_{\tilde E}$ has just two 
eigenvalues and $Z$ is generated by $N$ and $g^{t}$ for a nonunipotent element $g$ of $\bGamma_{\tilde E}$. 
$N$ admits an invariant Euclidean structure being a cusp group. 

Now, we come back to the affine $(n-1)$-space \hyperlink{term-lsphere}{$H = \tilde \Sigma_{\tilde E} \subset \SI^{n-1}_{\bv_{\tilde E}}$}. 
$A$ is given by $x_{2}=0$. 
A segment from $\bv_{\tilde E}$ in direction of $H \subset \SI^{n-1}_{\bv_{\tilde E}}$ is of 
form \[\big\{[t(1, 0, \dots, 0) + (1-t)(0, 1, \vec{w}, w_{n-1})] \big| t\in [0, 1], \vec{w} \in \bR^{n-2}, w_{n-1} \in \bR\big\}\]
corresponds to $(\vec{w}, w_{n-1}) \in \bR^{n-1}$ in the affine space $H$ with coordinates. 
From the above form of matrices of $N$, we obtain that
$g\in N$ given by $N(\vec{v})$ gives an affine transformation 
\[(\vec{w}, w_{n-1}) \mapsto \left(\vec{w} + \vec{v}, w_{n-1} + \frac{1}{2}||\vec{v}||^{2}\right).\]
The group $N$ acts on $H$ so that the orbits are paraboloids of dimension $n-2$ parallel to one another.
$g_{t}$ sends orbits of $N$ to orbits of $N$; i.e., paraboloids to parallel ones in a coordinate system of $H$.  
Since $g_{t}$ has different eigenvalues at $q$ and $\bv_{\tilde E}$ for $t \ne 0$, 
$g_{t}$ sends leaves to leaves without invariant leaves. 
Also, $Z$ is unimodular on $H$. 
{Each leaf has} the affine metric given as the paraboloid in $H$ using the affine differential geometry
for $H$ with an invariant parallel volume form.
The affine metric is Euclidean, and 
$g_{t}$ is an isometry between two leaves. 
We obtain a $g_{t}$-invariant Euclidean metric on $U_{q}$. 
Thus, $U_{q}$ has a $Z$-invariant Euclidean metric. 
Thus, $\bGamma_{\tilde E}$ is virtually abelian by the Bieberbach theorem 
since $\bGamma_{\tilde E}$ acts properly discontinuously on
$\partial U_{q}$ and hence on $U_{q}$. 

Thus, a virtual center of $\bGamma_{\tilde E}$ goes to a Zariski dense subgroup of the center of 
$\Aut(K)$. 
Thus, Theorem \ref{thm-thirdmain}  implies the result.
\end{proof}

\begin{corollary}[cusp and complete affine]\label{cor-cusp} 
Let $\orb$ be a strongly tame properly convex $n$-orbifold. 
Suppose that $\tilde E$ is a complete affine p-R-end of its universal cover $\torb$ in $\SI^n$ or 
in $\bR P^n$. Let $\bv_{\tilde E} \in \SI^n$  be the p-end vertex with the p-end fundamental group 
$\bGamma_{\tilde E}$. Suppose $\bGamma_{\tilde E}$ satisfies the weak middle eigenvalue condition.
Then $\tilde E$ is a complete affine R-end if and only if $\tilde E$ is a cusp R-end. 
\end{corollary}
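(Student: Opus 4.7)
The plan is to derive the corollary essentially as a direct consequence of Theorem \ref{thm-comphoro}, using the weak middle eigenvalue hypothesis to eliminate the second (nilpotent but non-unipotent) alternative. For the converse direction, every cusp p-R-end is horospherical by the definitions given before Theorem \ref{thm-affinehoro}, and Theorem \ref{thm-affinehoro}(i) tells us that any horospherical p-R-end has complete affine transversal structure; so a cusp R-end is automatically a complete affine R-end. The substantive direction is the forward implication.

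For the forward direction, I would apply Theorem \ref{thm-comphoro} to $\tilde E$ and split into its two alternatives. In the first alternative, $\bGamma_{\tilde E}$ is virtually unipotent and every element has all eigenvalues of unit norm; then Lemma \ref{lem-unithoro} applies and concludes that $\tilde E$ is horospherical, hence (by Corollary \ref{cor-cusphor}) a cusp R-end. So the goal is to rule out the second alternative using the weak middle eigenvalue condition. Suppose for contradiction that some $g \in \bGamma_{\tilde E}$ has two distinct norms of eigenvalues. Following the case analysis inside the proof of Theorem \ref{thm-comphoro}, only cases (b), (c), (d) can occur, and (b) is unimodular; so the non-unipotent behavior must come from an element in case (c) or (d). In case (d) we have $\lambda_1(g) = \lambda_{\bv_{\tilde E}}(g) > \lambda_+(g)$, and Theorem \ref{thm-comphoro} records that in this situation $\lambda_{\bv_{\tilde E}}(g)$ occurs with multiplicity one, directly contradicting the weak middle eigenvalue condition.

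It remains to exclude case (c), where $\lambda_1(g) = \lambda_+(g) > \lambda_{\bv_{\tilde E}}(g)$ and, again by Theorem \ref{thm-comphoro}, every norm of an eigenvalue of $g$ other than $\lambda_{\bv_{\tilde E}}(g)$ equals $\lambda_+(g)$. Here I would pass to $g^{-1}$: its eigenvalues are the reciprocals of those of $g$, so the norm $\lambda_{\bv_{\tilde E}}(g^{-1}) = \lambda_{\bv_{\tilde E}}(g)^{-1}$ becomes the \emph{strictly largest} norm of an eigenvalue of $g^{-1}$, and all other norms collapse to the single value $\lambda_+(g)^{-1}$. In particular, the eigenvalue at the p-end vertex for $g^{-1}$ is the largest with multiplicity one, again contradicting the weak middle eigenvalue condition. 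Therefore no element of $\bGamma_{\tilde E}$ can have two distinct norms, so only the first alternative of Theorem \ref{thm-comphoro} survives, and $\tilde E$ is a cusp R-end as desired.

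The only step I expect to require genuine care is the multiplicity bookkeeping when passing from $g$ to $g^{-1}$ in case (c): one must invoke the precise statement of Theorem \ref{thm-comphoro} that all norms of eigenvalues of $g$ other than $\lambda_{\bv_{\tilde E}}(g)$ coincide with $\lambda_+(g)$, since otherwise the reciprocal largest norm of $g^{-1}$ could still have multiplicity $\geq 2$ and the contradiction would fail. Everything else is essentially a bookkeeping application of the already-established structure theorems.
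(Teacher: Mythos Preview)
Your proposal is correct and follows essentially the same route as the paper: reduce to Theorem~\ref{thm-comphoro}, and use the weak middle eigenvalue condition together with the passage $g \mapsto g^{-1}$ to rule out the second alternative (where $\lambda_{\bv_{\tilde E}}(g)$ has multiplicity one). The only difference is cosmetic: you re-enter the case analysis (b), (c), (d) from inside the proof of Theorem~\ref{thm-comphoro}, whereas the paper works directly from its \emph{statement} --- the second alternative already records that some $g$ has two norms and that $\lambda_{\bv_{\tilde E}}(g)$ has multiplicity one, so replacing $g$ by $g^{-1}$ if necessary makes $\lambda_{\bv_{\tilde E}}$ the maximal norm and yields the contradiction without revisiting (c) versus (d).
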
 
\begin{proof} 
Since a cusp end is horospherical end (see above Theorem \ref{thm-mainaffine}), 
we need to show the forward direction only by Theorem \ref{thm-affinehoro}. 
In the second possibility of Theorem \ref{thm-comphoro}, the norm of 
$\lambda_{\bv_{\tilde E}}$ has a multiplicity one for a nonunipotent element 
$\gamma$ with $\lambda_{\bv_{\tilde E}}(\gamma)$ equal to the maximal norm.  
Thus, the first possibility of Theorem \ref{thm-comphoro} holds. 
\end{proof}

\bibliographystyle{plain}



\end{document}